\newcommand{\N}{\mathbb{N}}
\newcommand{\Z}{\mathbb{Z}}
\newcommand{\Q}{\mathbb{Q}}
\newcommand{\C}{\mathbb{C}}
\newcommand{\G}{\mathbb{G}}
\newcommand{\CP}{\C P}
\newcommand{\bundleL}{\mathbb{L}}
\newcommand{\torsorA}{\mathscr{A}}
\newcommand{\torsorB}{\mathscr{B}}
\newcommand{\torsorC}{\mathscr{C}}
\newcommand{\into}{\hookrightarrow}
\newcommand{\onto}{\twoheadrightarrow}
\DeclareMathOperator{\mspan}{span}
\DeclareMathOperator{\ann}{Ann}
\DeclareMathOperator{\spec}{spec}
\DeclareMathOperator*{\Ext}{Ext}
\DeclareMathOperator{\Hom}{Hom}
\newcommand{\coboundarymap}{\delta}
\newcommand{\weight}[1]{|#1|}
\newcommand{\length}[1]{\ell(#1)}
\newcommand{\drawbox}[1]{\begin{array}{|l|}\hline #1 \\ \hline\end{array}}
\def\@seccntformat#1{\csname the#1\endcsname.\quad}
\newtheoremstyle{thmstyle}{\topsep}{\topsep}{}{}{\bf}{:}{.5em}{}
\theoremstyle{thmstyle}
\newtheorem{thm}{Theorem}[subsection]
\newtheorem{defn}[thm]{Definition}
\newtheorem{cor}[thm]{Corollary}
\newtheorem{lem}[thm]{Lemma}
\newtheorem{rem}[thm]{Remark}
\newtheorem{ithm}{Theorem}[section]
\title{An Elementary Classification of Symmetric $2$-Cocycles \footnote{The authors were supported by NSF grant DMS-0705233.}}
\author{Adam Hughes, JohnMark Lau, Eric Peterson
}
\begin{document}


\maketitle

\begin{abstract}
We present a classification of the so-called ``additive symmetric 2-cocycles'' of arbitrary degree and dimension over $\Z_p$, along with a partial result and some conjectures for $m$-cocycles over $\Z_p$, $m > 2$.  This expands greatly on a result originally due to Lazard and more recently investigated by Ando, Hopkins, and Strickland, which together with their work culminates in a complete classification of $2$-cocycles over an arbitrary commutative ring.  The ring classifying these polynomials finds application in algebraic topology, including generalizations of formal group laws and of cubical structures.
\end{abstract}
\tableofcontents
\newpage

\section{Overview}\label{Overview}

An additive symmetric $2$-cocycle in $k$-variables over a commutative ring $A$ (or simply ``a cocycle'') is a symmetric polynomial $f \in A[x_1, \ldots, x_k]$ that satisfies the following equation: \[f(x_1, x_2, x_3, \ldots, x_k) - f(x_0 + x_1, x_2, x_3, \ldots, x_k) + f(x_0, x_1 + x_2, x_3, \ldots, x_k) - f(x_0, x_1, x_3, \ldots, x_k) = 0.\]  When $k = 2$, these polynomials were classified by Lazard in \cite{Laz55} in the context of formal group laws, where he exhibited a countable basis for the space of cocycles of the form \[f_n(x, y) = \left(\gcd_{0 < i < n}{n \choose i}\right)^{-1} \left((x + y)^n - x^n - y^n\right),\] one for each $n \in \N$.

An extension of Lazard's work was considered by Ando, Hopkins, and Strickland in \cite{AHS01} to explore $BU \langle 2k \rangle$ (see \textsection\ref{TheFunctorspecHBU2k}).  They accomplished a complete classification of the $k$-variable rational cocycles for all $k$, where they found that they were generated by a unique polynomial in each homogenous degree given by \[\zeta_k^n = d^{-1} \sum_{\substack{I \subseteq \{1, \ldots, k\} \\ I \ne \emptyset}} (-1)^{|I|} \left( \sum_{i \in I} x_i \right)^n,\] where $d$ is the gcd of the coefficients of the right-hand sum.  The form of these cocycles is a relatively straightforward generalization of Lazard's cocycles; note that \[\zeta_2^n = d^{-1} \sum_{\substack{I \subseteq \{1, 2\} \\ I \ne \emptyset}} (-1)^{|I|} \left( \sum_{i \in I} x_I \right)^n = d^{-1} \left( (x_1 + x_2)^n - x_1^n - x_2^n \right) = f_n(x_1, x_2).\]  The authors also found a classification for $A$-cocycles in three-variable case for any commutative ring $A$.  In the particular case when $A = \Z_p$, they found generators in each homogenous degree given by one or both of $\zeta_3^n$ and $(\zeta_3^{n / p})^p$ under the projection $\pi_p: \Z \onto \Z_p$, the second considered precisely when $p \mid n$.

This modular classification is what we complete for higher $k$.  What separates our approach from past classifications is that we construct the classification for all $k$ in concert; indeed, the classification for $k$ variable often depends upon the classification for $r$ variables, $r > k$.

First, to each integer partition $\lambda$ of $n$, we associate a symmetric polynomial $\tau \lambda$ given by \[\tau \lambda = d^{-1} \sum_{\sigma \in S_k} x_1^{\lambda_{\sigma 1}} x_2^{\lambda_{\sigma 2}} \cdots x_k^{\lambda_{\sigma k}} \in \Z[\mathbf{x}],\] \[d = |\{\sigma \in S_k \mid \sigma \lambda = \lambda\}|,\] where $S_k$ acts on an ordered partition $\lambda$ by permuting its elements.  For instance, we have
\begin{align*}
\tau(2, 1, 1) & = x_1^2 x_2 x_3 + x_1 x_2^2 x_3 + x_1 x_2 x_3^2, \\
\tau(2, 2, 2) & = x_1^2 x_2^2 x_3^2, \\
\tau(1, 2, 3) & = x_1 x_2^2 x_3^3 + x_1 x_2^3 x_3^2 + x_1^2 x_2 x_3^3 + x_1^3 x_2 x_3^2 + x_1^2 x_2^3 x_3 + x_1^3 x_2^2 x_3.
\end{align*}
We say such a partition is power-of-$p$ when all its entries are integer powers of $p$.  It's not difficult to show that $\tau \lambda$ is a cocycle over $\Z_p$ when $\lambda$ is power-of-$p$.  The first step in our classification is then
\begin{ithm}\label{IntroPOPCocycles}
Let $n, k$ be such that a power-of-$p$ partition of $n$ of length $k$ exists.  Then the symmetrized monomials corresponding to power-of-$p$ partitions of $n$ of length $k$ are the \emph{only} $2$-cocycles of that homogenous degree, number of variables, and characteristic. 
\end{ithm}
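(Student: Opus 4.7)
The plan naturally splits into two halves: verifying that each $\tau\lambda$ for $\lambda$ a power-of-$p$ partition is genuinely a cocycle, and then showing that the cocycle space contains nothing else.

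The first half is the easier direction. Since $\tau\lambda$ is symmetric by construction, only the four-term identity needs checking. Here the power-of-$p$ hypothesis is the decisive ingredient: over $\Z_p$ the Frobenius identity $(u+v)^{p^a} = u^{p^a} + v^{p^a}$ holds, so every factor $x_j^{\lambda_j}$ with $\lambda_j = p^{a_j}$ splits additively under any substitution of the form $x_j \mapsto x_0 + x_1$ (or its analogue). Expanding the symmetrization and pairing corresponding terms across the $S_k$-action, the four-term alternating sum collapses.

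For the converse, I would expand an arbitrary cocycle $f$ of homogeneous degree $n$ in $k$ variables in the monomial-symmetric basis, writing $f = \sum_{\mu} c_{\mu}\, \tau\mu$ over partitions $\mu$ of $n$ of length $k$. The cocycle identity is a polynomial relation in $x_0, x_1, \ldots, x_k$; extracting coefficients of individual monomials translates it into a system of $\Z_p$-linear constraints on the $c_{\mu}$. The goal is to show that these constraints force $c_{\mu} \equiv 0 \pmod{p}$ whenever some part $\mu_j$ fails to be a power of $p$.

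The main obstacle lies in this coefficient analysis. To isolate a given $c_{\mu}$ one selects a witness monomial $M$ in $x_0, \ldots, x_k$ such that $\tau\mu$ contributes to $M$ with a $\Z_p$-unit coefficient while all other $\tau\mu'$ contribute only in a controlled, inductively tractable manner. For instance, if $\mu_1 = p^a m$ with $m > 1$ coprime to $p$, then Kummer's theorem identifies $\binom{\mu_1}{p^a}$ as a $p$-unit, and a candidate witness is $x_0^{p^a} x_1^{\mu_1 - p^a} x_2^{\mu_2} \cdots x_k^{\mu_k}$. The difficulty compounds when several of the $\mu_j$ are simultaneously non-power-of-$p$, demanding an induction organized by a suitable \emph{defect} measuring how far $\mu$ departs from power-of-$p$ structure. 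The authors' remark that classification for $k$ variables leans on classification for $r > k$ suggests that part of this bookkeeping is best reorganized by promoting a $k$-variable cocycle relation to an identity on more variables via a split $x_i \mapsto x_i' + x_i''$, whose richer combinatorics permits finer discrimination among the indexing partitions.
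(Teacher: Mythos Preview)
Your first half---that $\tau\lambda$ is a cocycle when $\lambda$ is power-of-$p$---is correct and matches the paper's Frobenius argument.

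The second half has a real gap. You correctly see that the difficulty is the coefficient analysis, but you have not found the invariant that organizes it. The paper's device is the \emph{carry count} $\alpha_p(\mu)$, the $p$-adic valuation of $\binom{n}{\mu}$. Two facts do all the work: (i) $\coboundarymap_2$ preserves carry count, in the sense that every monomial appearing with nonzero coefficient in $\coboundarymap_2(\tau\mu)$ has exponent multi-index with the same $\alpha_p$ as $\mu$; and (ii) when a power-of-$p$ partition of length $k$ and weight $n$ exists, every non-power-of-$p$ partition of that length and weight fails to be carry-minimal (its splitting distance is at least $p-1$, so an alternative gathering with strictly fewer carries exists). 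The paper then argues that any cocycle containing $\tau\mu$ for such a $\mu$ is forced, by iterating exactly the two-parent mechanism you allude to, to also contain some $\tau\nu$ with $\alpha_p(\nu)<\alpha_p(\mu)$; but by (i) the images of $\tau\mu$ and $\tau\nu$ under $\coboundarymap_2$ lie in disjoint carry strata and cannot cancel one another, so no such cocycle exists. Your proposed ``defect'' induction is reaching for this, but without the carry invariant there is no guarantee the chain of forced companions terminates: your single witness $x_0^{p^a}x_1^{\mu_1-p^a}x_2^{\mu_2}\cdots x_k^{\mu_k}$ links $c_\mu$ only to $c_\nu$ for $\nu=(p^a,\,\mu_1-p^a+\mu_2,\,\mu_3,\ldots,\mu_k)$, and $\nu$ may again be non-power-of-$p$ with no evident decrease in any naive defect. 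Finally, your reading of the remark about classifications for $r>k$ is off: that refers to the gathering construction used for dimensions in which \emph{no} power-of-$p$ partition exists, not to the exhaustiveness argument needed here.
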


This alone gives the vast majority of the classification in $\Z_2$; if for a power-of-$2$ partition $\lambda$ we can pick $x \in \lambda$ not equal to one, then we can construct the partition $\lambda' = (\lambda \setminus (x)) \cup (2^{-1} x, 2^{-1} x)$, where $\cup$ denotes partition concatenation and $\setminus$ denotes deletion.  $\lambda'$ is a power-of-$2$ partition of length one greater than $\lambda$, and we can then simply apply \ref{IntroPOPCocycles} again.  Of course, when $n = 13$, our smallest power-of-$2$ partition is given by $(8, 4, 1)$, and so \ref{IntroPOPCocycles} tells us nothing about the 2-cocycles in two variables here.  This problem becomes even more exaggerated in odd prime characteristics; the partition $(9, 3)$ gives rise to the power-of-$3$ partitions $(9, 1, 1, 1)$ and $(3, 3, 3, 3)$ by a similar splitting procedure, and now we find that we've skipped over the cocycles in three variables.  To highlight the non-power-of-$p$ cases we've left undescribed, we provide the following excerpt from the table of cocycle bases over $\Z_3$ contained in \ref{Char3Table}, as obtained by raw computation:

\begin{equation*} 
\begin{array}{r|l|l|l|l|l|}
             & \hbox{dim 2} &            3 &              4 &                5 &            6 \\
\hline
\vdots       & \vdots       & \vdots       & \vdots         & \vdots           & \vdots \\
\hbox{deg 8} & \tau(6,2)+   & \tau(6,1,1)- & \tau(3,3,1,1)  & \tau(3,2,1,1,1)- & \tau(3,1,1,1,1,1) \\
             & \tau(4,4)-   & \tau(4,3,1)+ &                & \tau(4,1,1,1,1)  &             \\
             & \tau(7,1)-   & \tau(3,3,2)  &                &                  &             \\
             & \tau(5,3)    &              &                &                  &             \\
\vdots       & \vdots       & \vdots       & \vdots         & \vdots           & \vdots \\
          12 &\tau(9,3)     & \tau(6,3,3), & \tau(3,3,3,3), & \tau(6,3,1,1,1)- & \tau(3,3,3,1,1,1) \\
             &              & \tau(9,2,1)- & \tau(9,1,1,1)  & \tau(4,3,3,1,1)+ &             \\
             &              & \tau(10,1,1) &                & \tau(3,3,3,2,1)  &             \\
\vdots       & \vdots       & \vdots       & \vdots         & \vdots           & \vdots \\
\end{array}
\end{equation*} 

To explain these other entries, we define $G_{i, j}$, called a gathering operator, to act on partitions by \[G_{i, j} : \lambda \mapsto (\lambda_i + \lambda_j) \cup (\lambda \setminus (\lambda_i, \lambda_j)).\]  Following the above example, we compute
\begin{align*}
G_{1, 2}(9, 1, 1, 1) & = (10, 1, 1), \\
G_{2, 3}(9, 1, 1, 1) & = (9, 2, 1), \\
G_{1, 2}(3, 3, 3, 3) & = (6, 3, 3).
\end{align*}

Our main result is that in all degree, dimension, characteristic triples not covered by \ref{IntroPOPCocycles}, the following theorem completes the classification:

\begin{ithm}\label{IntroCarryMinAreCocycles}
Select a power-of-$p$ partition $\lambda$ of $n$ with length $k$.  Let $T^m \lambda$ denote the set of all possible partitions of the form $G_{i_1, j_i} \cdots G_{i_m, j_m} \lambda.$  Then, if $m \le p - 2$ or if $\lambda$ is the shortest power-of-$p$ partition of $n$, the polynomial \[\sum_{\mu \in T^m \lambda} c_\mu \cdot (\tau \mu)\] will be a cocycle, where $c_\mu$ is the coefficient of $\tau \mu$ in $\pi_p \zeta_{k-m}^n$.  In addition, cocycles formed in this manner give a basis for the space of modular cocycles.
\end{ithm}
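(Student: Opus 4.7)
The plan is to exploit the fact that $\pi_p \zeta_{k-m}^n$ is already a cocycle in $k-m$ variables over $\Z_p$ (it is integral by the construction of $\zeta_{k-m}^n$, and reducing mod $p$ preserves the linear cocycle identity), and to exhibit the polynomial in question as a summand in a natural decomposition of this cocycle into cocycle pieces indexed by length-$k$ power-of-$p$ partitions $\lambda$. Writing $\pi_p\zeta_{k-m}^n = \sum_\mu c_\mu \tau\mu$, the goal becomes proving that the partial sum $\sum_{\mu \in T^m\lambda} c_\mu \tau\mu$ is a cocycle for each $\lambda$ separately, and then that these partial sums collectively span the space of modular cocycles.

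The main bookkeeping device is a canonical ``ungathering'' procedure on length-$(k-m)$ partitions $\mu$ of $n$, defined by expanding each part $\mu_i$ in base $p$ and replacing it by the multiset of powers of $p$ that its digits encode. The key claim is that under the hypothesis $m \le p-2$ (respectively, $\lambda$ shortest), every $\mu$ with $c_\mu \not\equiv 0 \pmod p$ admits a \emph{unique} length-$k$ power-of-$p$ ungathering $\lambda$, so that $\{T^m\lambda\}_\lambda$ genuinely partitions the support of $\pi_p \zeta_{k-m}^n$. This rests on Kummer's theorem: the coefficient $c_\mu$ is essentially a multinomial whose $p$-adic valuation counts base-$p$ carries, and $c_\mu \not\equiv 0 \pmod p$ holds precisely when no carries occur. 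With $m+1 \le p-1$, any length-$k$ ungathering of a carry-free $\mu$ must read off the unique base-$p$ digits of the $\mu_i$, preventing distinct $\lambda$'s from sharing a $\mu$. The shortest-partition case is handled in parallel: the unique shortest power-of-$p$ partition of $n$ has length equal to the base-$p$ digit sum of $n$, and its gathered forms inherit a similarly rigid ungathering.

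Next I would show that each $\lambda$-block is a cocycle by applying the coboundary map $\delta$ (passing to $k-m+1$ variables) and running the same ungathering argument one dimension higher. Under $\delta$, a term $\tau\mu$ expands into a sum of $\tau\nu$'s where $\nu$ is obtained from $\mu$ by splitting a single part, and these splittings preserve the canonical length-$k$ ungathering label. Since $\delta(\pi_p\zeta_{k-m}^n) = 0$, and distinct $\lambda$'s contribute to disjoint sets of $\tau\nu$'s on the receiving side of $\delta$, each $\lambda$-block must have vanishing coboundary in isolation; hence each block is itself a cocycle.

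The basis claim then splits into two pieces. Linear independence is automatic, since distinct $\lambda$'s yield cocycles supported on disjoint subsets of the symmetrized-monomial basis. For spanning, given an arbitrary modular cocycle $f = \sum c_\mu' \tau\mu$ in $k-m$ variables, the same disjointness analysis applied to $\delta f = 0$ forces each $\lambda$-block of $f$ to be independently a scalar multiple of the corresponding standard block from $\pi_p \zeta_{k-m}^n$. I expect the main obstacle to be the careful verification in the second paragraph --- both the uniqueness of canonical ungathering and the preservation of the ungathering label under $\delta$ --- since this is exactly where the hypothesis $m \le p-2$ enters as the threshold below which no base-$p$ carries can arise in the relevant multinomials. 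The technical core of the argument will be a detailed analysis of multinomial coefficients modulo $p$ via Kummer's theorem, probably organized by induction on $m$.
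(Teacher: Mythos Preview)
Your existence argument --- decomposing $\pi_p\zeta_{k-m}^n$ into blocks indexed by length-$k$ power-of-$p$ partitions via the canonical ungathering $\mu \mapsto \bigcup_i \rho_p(\mu_i)$, and then showing the blocks are individually cocycles because their images under $\coboundarymap_2$ land in disjoint spans --- is exactly the paper's approach in \S\ref{Gathering} (your ``canonical ungathering'' is the paper's $\hat\lambda$, and your disjointness claim is Lemma~\ref{GatheringIntersection} combined with the $(a+b)^c$ expansion observation in Theorem~\ref{GathersNeedNoExtensions}). That half is fine.

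The spanning argument has a genuine gap. You write: ``given an arbitrary modular cocycle $f = \sum c_\mu' \tau\mu$, the same disjointness analysis forces each $\lambda$-block of $f$ to be a scalar multiple of the standard block.'' But an arbitrary cocycle $f$ need not be supported on $\bigcup_\lambda T^m\lambda$ at all --- it might involve $\tau\mu$ for $\mu$ that is \emph{not} carry-minimal, hence not in any $T^m\lambda$. Your disjointness machinery says nothing about such terms, and nothing in your sketch rules them out. The paper devotes most of \S\ref{IntegralProjection} to exactly this: it introduces the \emph{annihilating set} $\ann\lambda$ (the transitive closure of ``$\tau\mu$ must appear if $\tau\lambda$ does''), proves that every non-carry-minimal $\lambda$ has some $\lambda' \in \ann\lambda$ with $\alpha_p(\lambda') < \alpha_p(\lambda)$ (Theorem~\ref{AnnihilatorLowersCarry}), and then observes that since $\coboundarymap_2$ preserves carry count (Lemma~\ref{DifferentialPreservesCarry}), the forced lower-carry term produces image monomials that nothing else can cancel. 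This is not a disjointness argument; it is a careful combinatorial construction showing you can always push copies of $p^k$ around inside $\ann\lambda$ until they pile up and drop the carry count.

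There is a second, smaller gap: even once $f$ is known to be supported on carry-minimal $\mu$'s, disjointness across different $\lambda$'s only tells you each $\lambda$-block of $f$ is itself a cocycle. To conclude it is a \emph{scalar multiple} of the standard block you need that the cocycle space supported on a single $T^m\lambda$ is one-dimensional --- equivalently, that the standard block is indecomposable. The paper gets this from $\ann\mu = T^m\lambda$ for every $\mu \in T^m\lambda$ (Theorem~\ref{AnnihilatorSetIsTmnk}), which is a connectivity statement \emph{within} a block rather than a disjointness statement \emph{between} blocks. Your sketch conflates the two.
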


First note that by setting $m = 0$, this subsumes theorem \ref{IntroPOPCocycles}.  The theorem then applies in two cases, one corresponding to a limit on the number of gathering operators we apply and another to having picked a very particular $\lambda$.  To illustrate the first case, we continue our example of $n = 12, k = 3, p = 3$ by computing the requisite intermediates
\begin{align*}
T^1(9, 1, 1, 1) & = \{(9, 2, 1), (10, 1, 1)\}, \\
T^1(3, 3, 3, 3) & = \{(6, 3, 3)\}, \\
\pi_3 \zeta_3^{12} & = \tau(9, 2, 1) - \tau(10, 1, 1) + \tau(6, 3, 3).
\end{align*}
Since $m = 1 \le 1 = p - 2$, the above theorem then states that $(\tau(9, 2, 1) - \tau(10, 1, 1))$ and $\tau(6, 3, 3)$ are cocycles that form a basis for this subspace. The second case applies in essence when $\lambda$ corresponds to the base-$p$ representation of $n$; for instance, if $p = 3$ and $n = 8 = 2 \cdot 3^1 + 2 \cdot 3^0$, then $\lambda = (3, 3, 1, 1)$ is the smallest power-of-$3$ partition of $8$.  We can use the following information to form cocycle bases of dimensions $2$ and $3$:
\begin{align*}
T^1(3, 3, 1, 1) & = \{ (6, 1, 1), (4, 3, 1), (3, 3, 2) \} \\
\pi_3 \zeta^8_3 & = \tau(4, 3, 1) - \tau(6, 1, 1) - \tau(3, 3, 2) \\
T^2(3, 3, 1, 1) & = \{ (7, 1), (6, 2), (5, 3), (4, 4) \} \\
\pi_3 \zeta^8_2 & = \tau(7, 1) - \tau(6, 2) + \tau(5, 3) - \tau(4, 4)
\end{align*}
Theorem \ref{IntroCarryMinAreCocycles} then states that $\pi_3 \zeta^8_2$ and $\pi_3 \zeta^8_3$ span the spaces of characteristic $3$ cocycles of homogenous degree $8$ in dimensions $2$ and $3$ respectively.

To emphasize the interdimensional relationship that $T^m \lambda$ illuminates, we present the following stratifications of the first few cocycles of degrees $8$ and $12$ in characteristic $3$:

\begin{figure}[H]
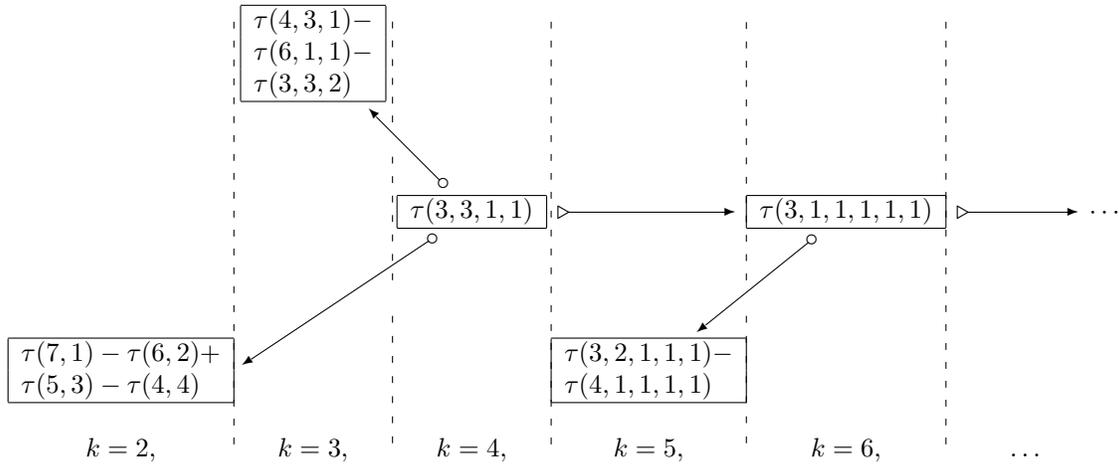

\begin{diagram}
                                    & \dAllDash & \drawbox{\tau(4, 3, 1) - \\
                                                           \tau(6, 1, 1) - \\
                                                           \tau(3, 3, 2)}   & \dAllDash      &                            & \dAllDash &                                 & \dAllDash & & \dAllDash\\
                                    &           &                           & \luGather      &                            &           &                                 &           & & \\
                                    &           &                           & \dAllDash      & \drawbox{\tau(3, 3, 1, 1)} &           & \rSplit                         &           & \drawbox{\tau(3, 1, 1, 1, 1, 1)} & & \rSplit & \cdots \\
                                    &           &                           & \ldGather(3,2) &                            &           &                                 & \ldGather & \\
\drawbox{\tau(7, 1) - \tau(6, 2) + \\
         \tau(5, 3) - \tau(4, 4)}   &           &                           & \dAllDash      &                            &           & \drawbox{\tau(3, 2, 1, 1, 1) - \\
                                                                                                                                                 \tau(4, 1, 1, 1, 1)}   & \dAllDash & \\
k = 2,                              &           & k = 3,                    &                & k = 4,                     &           & k = 5,                          &           & k = 6, & & \ldots
\end{diagram}
\caption{Cocycles over $\Z_3$ of homogenous degree $8$.}
\end{figure}
\begin{figure}[H]
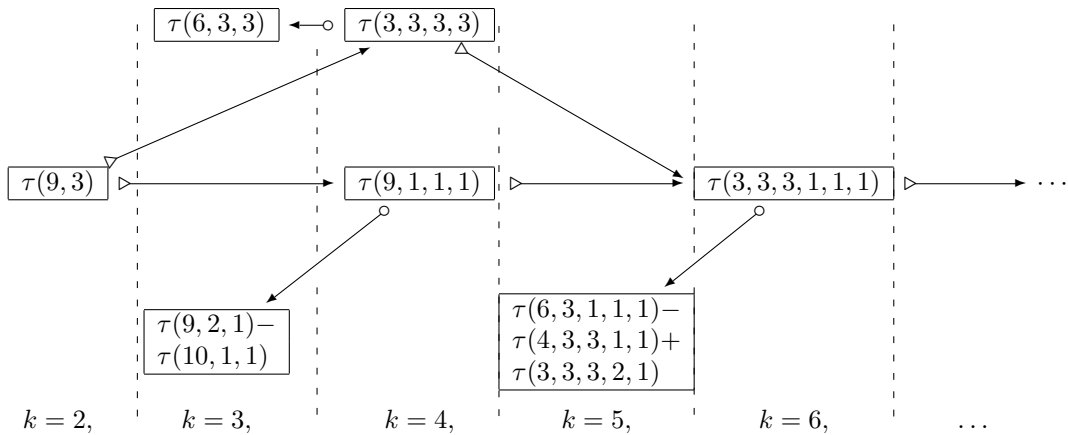

\begin{diagram}
                     & \dAllDash & \drawbox{\tau(6, 3, 3)}                     & \lGather                 & \drawbox{\tau(3, 3, 3, 3)} & \dAllDash       &       & \dAllDash & & \dAllDash \\
                     &           &                                             & \ruSplit(4, 2) \dAllDash &                            & \rdSplit(3, 2)  &       & \\
\drawbox{\tau(9, 3)} &           & \rSplit                                     &                          & \drawbox{\tau(9, 1, 1, 1)} & \dAllDash       &\rSplit& & \drawbox{\tau(3, 3, 3, 1, 1, 1)}  & & \rSplit & \cdots \\
                     &           &                                             & \ldGather(2, 2)          &                            &                 & & \ldGather(2, 2)    \\
                     &           & \drawbox{\tau(9, 2, 1) - \\ \tau(10, 1, 1)} & \dAllDash                &                            & & \drawbox{\tau(6, 3, 1, 1, 1) - \\
                                                                                                                                                    \tau(4, 3, 3, 1, 1) + \\
                                                                                                                                                    \tau(3, 3, 3, 2, 1)}     & \\
k = 2,               &           & k = 3,                                      &                          & k = 4,                     & & k = 5,                            & \uAllDash & k = 6, & & \ldots
\end{diagram}
\caption{Cocycles over $\Z_3$ of homogenous degree $12$.}
\end{figure} 
Here arrows with triangular tails denote cocycles discovered by splitting a power-of-$p$ partition, while those with circular tails denote cocycles discovered by our gathering operations and theorem \ref{IntroCarryMinAreCocycles}.

Under the conditions employed in \ref{IntroCarryMinAreCocycles}, $T^m \mu$ and $T^m \lambda$ are disjoint for distinct $\mu$ and $\lambda$, which gives us a way to count the cocycles of a particular degree and dimension.  It is well known that the coefficients $C^p_{nk}$ of the generating function \[\prod_{i = 0}^\infty \left( 1 - t x^{p^i} \right)^{-1} = \sum_{n, k} C^p_{nk} x^n t^k\] count the number of ways to write $n$ as a sum of $k$ powers of $p$.  Then, as a corollary,
\begin{ithm}\label{IntroGF}
The number of cocycles in degree $n$ and dimension $k$ is $C^p_{nk'}$, where $k'$ is the smallest number greater than $k$ such that $C^p_{nk'}$ is nonzero.
\end{ithm}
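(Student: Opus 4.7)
The plan is to count the basis provided by Theorem~\ref{IntroCarryMinAreCocycles} and observe that the count matches $C^p_{nk'}$ via the given generating function.

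First I would describe the set of lengths realized by power-of-$p$ partitions of $n$. The only move carrying a power-of-$p$ partition to another of different length is the replacement of a part $p^i$ ($i \ge 1$) with $p$ copies of $p^{i-1}$, which changes length by exactly $p-1$. Starting from the base-$p$ representation, whose length is the digit sum $\ell_{\min}$ of $n$, the realized lengths form the arithmetic progression
\[ \ell_{\min},\; \ell_{\min}+(p-1),\; \ell_{\min}+2(p-1),\; \ldots,\; n. \]

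For fixed $k$, let $k' \ge k$ be the smallest realized length. By Theorem~\ref{IntroCarryMinAreCocycles}, a basis of the degree-$n$, dimension-$k$ cocycles is indexed by pairs $(\lambda, m)$ with $\ell(\lambda) = k+m$ satisfying the disjunctive condition ``$m \le p-2$ or $\lambda$ is the shortest power-of-$p$ partition of $n$''. The key claim is that every basis-indexing pair has $\ell(\lambda) = k'$. If $\ell(\lambda) > k'$ then $\ell(\lambda) \ge k'+(p-1)$, so $m = \ell(\lambda) - k \ge p - 1 > p-2$ and $\ell(\lambda) > \ell_{\min}$; both clauses of the condition fail. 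Conversely, $\ell(\lambda) = k'$ always meets the condition: when $k \ge \ell_{\min}$ the step-$(p-1)$ structure of the progression forces $k' - k \le p-2$, and when $k < \ell_{\min}$ we have $k' = \ell_{\min}$, making $\lambda$ the shortest.

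It follows that the basis is in bijection with power-of-$p$ partitions of $n$ of length $k'$, and the stated identity $\prod_i (1 - tx^{p^i})^{-1} = \sum_{n,k} C^p_{nk} x^n t^k$, obtained by expanding each factor as a geometric series and recording each partition as a multiplicity vector on the powers of $p$, counts these as $C^p_{nk'}$. The main obstacle, I expect, is the case analysis above: in the sub-case $k < \ell_{\min}$ the exponent $m = \ell_{\min} - k$ may easily exceed $p-2$, so we are forced to lean on the ``shortest $\lambda$'' clause of Theorem~\ref{IntroCarryMinAreCocycles} rather than the $m \le p-2$ clause; ruling out over-long $\lambda$ (length $> k'$) requires using both clauses in tandem with the step-size structure. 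Everything else is bookkeeping.
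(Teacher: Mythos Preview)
Your proposal is correct and follows essentially the same approach as the paper. The paper's own proof of this statement (appearing as Theorem~\ref{ActualCocycleCount}) is a one-line citation of \ref{GatheringIntersection}, \ref{RationalProjection}, and \ref{GathersAreIndecomposable}; you are instead invoking the packaged form of those results, Theorem~\ref{IntroCarryMinAreCocycles}, and then making explicit the arithmetic-progression bookkeeping (step size $p-1$, the two cases $k \ge \ell_{\min}$ and $k < \ell_{\min}$) that the paper leaves to the reader.
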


We begin the paper in \textsection\ref{Applications} by motivating the study of these $2$-cocycles and investigating loosely where they arise in other fields.  We then spend \textsection\ref{Preliminaries} introducing the notations and conventions used in our proofs, including multi-indices and the relevant cochain complex.  We break down the problem into smaller parts in \textsection\ref{BasicResults}, then work up to a version of \ref{IntroPOPCocycles} and the definition of $\zeta_k^n$.  In \textsection\ref{Gathering} we produce a variety of results about carry minimality and carry's behavior under our gathering and splitting operations, culminating in a proof of the first half of \ref{IntroCarryMinAreCocycles}.  We then spend \textsection\ref{IntegralProjection} on demonstrating that the cocycles we constructed in \textsection\ref{Gathering} form a basis, corresponding to the second half of \ref{IntroCarryMinAreCocycles}.  We wrap up the paper by giving a few corollaries of our classification in sections \textsection\ref{CountingAdditive2Cocycles} and \textsection\ref{TheGeneralizedLazardRing}, along with conjectures for higher cocycle conditions in \textsection\ref{ForHigherM}.

\section{Applications}\label{Applications}

\subsection{The Functor $\spec H_*BU \langle 2k \rangle$}\label{TheFunctorspecHBU2k}
Let $\{\bundleL_i\}_{i = 1}^{k+1}$ be $(k+1)$ copies of the line bundle over $\CP^\infty$ inherited from $\C^\infty \to \CP^\infty$ and denote the trivial line bundle on $\CP^\infty$ by $1$.  Vector bundles over a fixed space $X$ form a commutative semiring with addition given by direct sum and multiplication by tensor product.  Applying the Grothendieck construction to this semiring (which effectively adjoins formal additive inverses) produces what is called the ring of virtual bundles, denoted $K(X)$ or $K^0(X)$.  In this context we can consider the virtual bundle \[\xi_k = \prod_{i=1}^k (1 - \bundleL_i) = \sum_{I \subseteq \{1, \ldots, k\}} (-1)^{|I|} \left( \prod_{i \in I} \bundleL_i \right),\] taken over the product space $(\CP^\infty)^k$.

The virtual bundle $\xi_k$ is important because it illuminates a correspondence between ring maps \[H_* BU \langle 2k \rangle \to A\] and multiplicative $2$-cocycles in $k$ variables with coefficients in $A$ (here $X \langle n \rangle$ is the $(n-1)$-connected cover of $X$).  This arises in short because $\xi_k$ is a virtual bundle of rank zero, so there's a map $(\CP^\infty)^k \to BU$ classifying it.  Because $\xi_k$ has vanishing Chern classes $c_1, \ldots, c_k$ this classifying map lifts to $f: (\CP^\infty)^k \to BU \langle 2k \rangle$ in such a way that the following diagram commutes:
\begin{diagram}
\xi_k          &                               & BU \langle 2k \rangle \\
\dTo           & \ruTo^f                       & \dTo>{\hbox{connective fibration}}                  \\
(\CP^\infty)^k & \rTo_{\hbox{\hspace{3.5em}classifying map of $\xi_k$}} & BU
\end{diagram}

This induces a map in homology $H_* f: H_*(\CP^\infty)^k \to H_* BU \langle 2k \rangle,$ which by the universal coefficient theorem corresponds to an element \[f' \in H^*((\CP^\infty)^k; H_* BU \langle 2k \rangle).\]  An application of the K\"unneth formula and the calculation $H^* \CP^\infty = \Z \llbracket x \rrbracket$ gives that
\begin{align*}
H^*((\CP^\infty)^k ; H_* BU\langle 2k \rangle) & = \bigotimes_{i = 1}^k H^*(\CP^\infty; H_* BU\langle 2k \rangle) \\
& = (H_* BU\langle 2k \rangle)\llbracket x_1, \ldots, x_k\rrbracket,
\end{align*}
and so $f'$ can be viewed as a power series, an idea we further explore.  To start, there are also two standard classes of maps:
\begin{itemize}
{\item $\pi_i: (\CP^\infty)^{k+1} \to (\CP^\infty)^k$, given by dropping the $i$th copy of $\CP^\infty$.}
{\item $m_{ij}: (\CP^\infty)^{k+1} \to (\CP^\infty)^k$, given by applying the multiplication map $\CP^\infty \times \CP^\infty \to \CP^\infty$ to the $i$th and $j$th components, corresponding to tensor product of line bundles.}
\end{itemize}
We can then compute the pullback bundles along these maps:
\begin{align*}
\pi_s^* \xi_k & = \prod_{\substack{1 \le i \le k \\ i \ne s}}(1 - \bundleL_i), \\
m_{st}^* \xi_k & = (1 - \bundleL_s\bundleL_t) \cdot \prod_{\substack{1 \le i \le k \\ i \ne s, t}} (1 - \bundleL_i).
\end{align*}

Next, we make note of the following isomorphism (where $s \ne t$):
\begin{align*}
(m_{st}^* - \pi_s^* - \pi_t^*) \xi_k & = (1 - \bundleL_s \bundleL_t) \cdot \prod_{\substack{1 \le i \le k + 1 \\ i \ne s, t}} (1 - \bundleL_i) - \prod_{\substack{1 \le i \le k + 1 \\ i \ne s}} (1 - \bundleL_i) - \prod_{\substack{1 \le i \le k + 1 \\ i \ne t}} (1 - \bundleL_i) \\
& = ((1 - \bundleL_s \bundleL_t) - (1 - \bundleL_s) - (1 - \bundleL_t)) \prod_{\substack{1 \le i \le k + 1 \\ i \ne s, t}} (1 - \bundleL_i) \\
& = \prod_{1 \le i \le k+1} (1 - \bundleL_i) = \xi_{k+1}.
\end{align*}
In particular, this means the following class of isomorphisms hold for all choices of $s \ne t, s' \ne t'$: \[(m_{st}^* - \pi_s^* - \pi_t^*)\xi_k \cong (m_{s't'}^* - \pi_{s'}^* - \pi_{t'}^*)\xi_k.\]  Selecting $s = 1$, $t = s' = 2$, and $t' = 3$ gives the following identity in terms of our power series $f'$: \[\frac{f'(x_1, \ldots, x_k)}{f'(x_0 + x_1, x_2, \ldots, x_k)} \cdot \frac{f'(x_0, x_1 + x_2, x_3, \ldots, x_k)}{f'(x_0, x_1, x_3, \ldots, x_k)} = 1.\]  We call this the multiplicative $2$-cocycle condition in $k$ variables.

In addition, there are maps $t_{ij}: (\CP^\infty)^k \to (\CP^\infty)^k$ that act by transposing the $i$th and $j$th coordinates, and the isomorphism of virtual bundles $t_{ij}^* \xi_k \cong \xi_k$ means that $f'$ is symmetric as a power series.  The map $i_s: (\CP^\infty)^{k-1} \into (\CP^\infty)^k$ that includes away from the $s$th factor can be composed with $\pi_s$ to give a pullback bundle \[(i_s\pi_s)^*\xi_k = (1 - 1)\prod_{\substack{1 \le i \le k \\ i \ne s}}(1 - \bundleL_i) = 0,\] which in turn forces $f'$ to be a rigid power series (i.e., $f'(\ldots, 0, \ldots) = 1$).  This guarantees the existence of the power series $(f')^{-1}$ used above.

For $k \le 3$, theorems due to Ando, Hopkins, and Strickland state that multiplicative 2-cocycles over an arbitrary ring $A$ are selected by this universal multiplicative $2$-cocycle $f'$ via some ring map $H_* BU\langle 2k \rangle \to A$ and the induced map $(H_* BU\langle 2k \rangle)\llbracket x_1, \ldots, x_k \rrbracket \to A \llbracket x_1, \ldots, x_k \rrbracket$.  In addition, the action of a ring map $H_* BU \langle 2k \rangle \to A$ is determined by the image of $f'$, effectively giving a polite description of $\spec H_* BU\langle 2k \rangle$.  For $k > 3$, the state of this correspondence is not known.

It is easy to check that these power series must be of the form $1 + g + \hbox{higher order terms}$, where $g$ is a $k$-variable additive 2-cocycle as described in \textsection\ref{Overview}.  Classifying the additive cocycles restricts where we should look to extend to multiplicative cocycles; in an algebraic geometric sense, the additive 2-cocycles surject onto the tangent space of multiplicative 2-cocycles.  This is then the first step in exploring the topological relationship described above.

\subsection{Formal Group Laws}\label{FormalGroupLaws}

Let $E^*$ be a multiplicative cohomology theory and $E = E^*(\mathrm{pt})$ be its coefficient ring.  $E^*$ is said to be complex orientable when it admits a notion of Chern classes; given a vector bundle $\xi / X$, the Chern classes associated to $\xi$ under $E^*$ are a sequence of elements $c^E_i(\xi) \in E^{2i}(X)$ satisfying the following properties:
\begin{itemize}
\item{{\bf Naturality:}  Given a map $f: Y \to X$, we have $f^* (c^E_i \xi) = c^E_i(f^* \xi)$, where the first use of $f^*$ denotes the induced map in cohomology and the second use denotes the pullback bundle construction.}
\item{{\bf Additivity:} For vector bundles $\xi / X$ and $\eta / X$, we have \[c^E_n(\xi \oplus \eta) = \sum_{i = 0}^n c^E_i(\xi) c^E_{n - i}(\eta).\]  If we write the ``total Chern class'' as the formal power series $c^E(\xi) = \sum_i c^E_i(\xi)$, this can be expressed as $c^E(\xi \oplus \eta) = c^E(\xi) c^E(\eta)$.}
\item{{\bf Normalization:} We require the cohomology theory to provide an element $x \in E^2 \CP^\infty$ such that $E^* \CP^\infty = E \llbracket x \rrbracket$ and $E^* \CP^k = E \llbracket x \rrbracket / \langle x \rangle^{k+1}$, and we require the first Chern class to behave as $c^E_1(\bundleL) = x$.  As in \textsection\ref{TheFunctorspecHBU2k}, $\bundleL$ is the line bundle over $\CP^\infty$ inherited from the quotient map $\C^\infty \to \CP^\infty$.}
\end{itemize}

Let $f: \CP^\infty \times \CP^\infty \to \CP^\infty$ be the map classifying the line bundle ($\bundleL \otimes \bundleL) / (\CP^\infty \times \CP^\infty)$.  $f$ induces a map in cohomology of the form \[f^*: E^*(\CP^\infty) \to E^*(\CP^\infty \times \CP^\infty) = E \llbracket x, y \rrbracket.\]  Applying the axioms above, we find that the first Chern class of the product bundle takes the form \[c^E_1(\bundleL \otimes \bundleL) = c^E_1(f^* \bundleL) = f^*(x) = F(x, y)\] for some bivariate power series $F$.  Various properties of the tensor product of line bundles force the following three properties upon $F$:
\begin{align*}
\xi \otimes \eta \cong \eta \otimes \xi & \implies F(x, y) = F(y, x), \\
\xi \otimes 1 \cong 1 \otimes \xi \cong \xi & \implies F(x, 0) = F(0, x) = x, \\
(\xi \otimes \eta) \otimes \nu \cong \xi \otimes (\eta \otimes \nu) & \implies F(F(x, y), z) = F(x, F(y, z)).
\end{align*}
Any $F$ satisfying these three properties we call a (commutative, one-dimensional) ``formal group law.''  As examples, the FGL associated in this way to ordinary cohomology theory is $\G_a(x, y) = x + y$, the FGL associated to complex K-theory is $\G_m(x, y) = x + y + xy$, and the FGL associated to complex cobordism is $U(x, y) = \sum_i a_i \cdot f_i(x, y)$, where $f_i$ are Lazard's cocycles from \textsection\ref{Overview} and the $a_i$ are the adjoined elements in $MU^*(\mathrm{pt}) \cong \Z[a_1, a_2, \ldots]$.

The cocycle condition also manifests in this context.  Given a ring $A$, we define an $n$-bud (sometimes called an $n$-chunk) to be a polynomial $f \in A \llbracket x, y \rrbracket / (x, y)^{n+1}$ also satisfying the above three conditions, a sort of truncated formal group law.  Now let $f' \in A \llbracket x, y \rrbracket / (x, y)^{n+2}$ be an $(n+1)$-bud extending $f$ (i.e., $f' = f \mod (x, y)^{n+1}$).  We seek to classify the polynomials $g \in A \llbracket x, y \rrbracket / (x, y)^{n+2}$ such that $f' + g$ is again an $(n+1)$-bud extension of $f$ (the motivation here is that two $(n+1)$-buds extending $f$ will differ by such a $g$).

To begin, $g$ must be of homogenous degree $(n+1)$ since it must vanish under the map \[A \llbracket x, y \rrbracket / (x, y)^{n+1} \onto A \llbracket x, y \rrbracket / (x, y)^n.\]  In addition, since $f' + g$ is an $(n+1)$-bud, their sum must satisfy the three FGL axioms, and in particular \[(f'+g)((f'+g)(x, y), z) = (f'+g)(x, (f'+g)(y, z)).\]  After noting that $\G_a(x, y)$ is simple and trivially both an $n$-bud and an $(n+1)$-bud, we set $f = f' = \G_a$ for ease of computation.  Keeping careful track of truncation degree we see:
\begin{align*}
(f'+g)((f'+g)(x, y), z) & = (f'+g)(x, y) + z + g((f'+g)(x, y), z) \\
& = x + y + z + g(x, y) + g(x + y + g(x, y), z) \\
& = x + y + z + g(x, y) + g(x + y, z), \\ \\
(f'+g)(x, (f'+g)(y, z)) & = x + (f'+g)(y, z) + g(x, (f'+g)(y, z)) \\
& = x + y + z + g(y, z) + g(x, y + z + g(y, z)) \\
& = x + y + z + g(y, z) + g(x, y + z).
\end{align*}
Equating these two expansions forces the relation \[g(x, y) - g(z + x, y) + g(z, x + y) - g(z, x) = 0,\]  and $g$ is then said to be a symmetric additive $2$-cocycle.


\subsection{Split Extensions and Higher Cubical Structures}

In this section, all groups are assumed abelian.  Let $A$ and $C$ be groups.  A group $B$ with homomorphisms $\pi, i$ and set map $s$ is said to be a ``split extension'' of $C$ by $A$ if the sequence \[0 \to A \stackrel{i}{\into} B \stackrel{\pi}{\onto} C \to 0\] is exact and $s$ satisfies both $\pi \circ s = id_C$ and $s(0_C) = 0_B.$  It is fairly obvious that $B \cong A \times C$ as sets; we can explicitly construct the two halves of the set isomorphism: \[\gamma: (a, c) \mapsto a + s c,\] \[\gamma^{-1}: b \mapsto (b - s \pi b, \pi b).\]  For brevity we have identified $A$ with its embedding in $B$, and we will shorthand the $A$ component of $\gamma^{-1}$ as $\alpha: b \mapsto b - s \pi b$.

We can then investigate the group structure induced on $A \times C$:
\begin{align*}
\gamma \gamma^{-1} b_1 + \gamma \gamma^{-1} b_2 & = \gamma \gamma^{-1}(b_1 + b_2) \\
\gamma(\alpha b_1, \pi b_1) + \gamma(\alpha b_2, \pi b_2) & = \gamma(\alpha(b_1 + b_2), \pi(b_1 + b_2)) \\
& = \gamma(b_1 + b_2 - s \pi (b_1 + b_2), \pi(b_1 + b_2)) \\
& = \gamma(\alpha b_1 + s \pi b_1 + \alpha b_2 + s \pi b_2 - s \pi (b_1 + b_2), \pi(b_1 + b_2)) \\
& = \gamma(\alpha b_1 + \alpha b_2 + (s \pi b_1 + s \pi b_2 - s \pi (b_1 + b_2)), \pi(b_1 + b_2))
\end{align*}
If we rename these elements as $(a_1, c_1) = \gamma^{-1} b_1, (a_2, c_2) = \gamma^{-1} b_2$, then the last equality takes the form \[(a_1, c_1) +_B (a_2, c_2) = (a_1 + a_2 + f_s(c_1, c_2), c_1 + c_2),\] \[f_s(c_1, c_2) = s c_1 + s c_2 - s(c_1 + c_2).\]  As in the formal group law computation in \textsection\ref{FormalGroupLaws}, checking associativity and commutativity of $+_B$ forces the $2$-cocycle and symmetry conditions on $f$.

It is important to note that we've converted a section $s: C \to B$ into a map $f: C^2 \to A$.  If we define a map of split extensions to be a map of short exact sequences such that the sections also commute, then it is easy to check that two split extensions are isomorphic if and only if their associated cocycles differ by a coboundary (i.e., for $f$ and $f'$ the associated $2$-cocycles, we can find $g: C \to A$ such that $(f - f')(c_1, c_2) = g(c_1 + c_2) - g(c_1) - g(c_2)$).  Again, this associated cochain complex in no way involves the extension $B$ groups themselves.

This can all be restated by letting $B$ be an $A$-torsor over $C$ (we'll change notation to $\torsorB$ to emphasize the change of setting); the section $s$ then ``trivializes'' the torsor.  In this light, we can use various standard constructions to build new torsors out of these old ones, including:

\begin{itemize}
{\item {\bf Pullback:} Given a $G$-torsor $\torsorB \to Y$ and a set map $f: X \to Y$, we can construct the pullback torsor $f^* \torsorB \to X$ whose fibers are given by $(f^* \torsorB)_x = \torsorB_{f(x)}$.}
{\item {\bf Dual:} Given a $G$-torsor $\torsorB \to X$ we can construct a torsor $\torsorB^{-1} \to X$, called the dual of $\torsorB$, whose fiber over $x \in X$ is given by $G$-equivariant maps $\torsorB_x \to G$.}
{\item {\bf Tensor product:} Given two $G$-torsors $\torsorA, \torsorB \to X$ we can construct a torsor $\torsorA \otimes \torsorB \to X$ whose fibers are given by $(\torsorA \otimes \torsorB)_x = \torsorA_x \otimes_G \torsorB_x$.  Denoting the trivial torsor $G \times X \to X$ by $1$, the notation for the dual is then motivated by the relations $\torsorA \otimes \torsorB \cong \torsorB \otimes \torsorA$, $\torsorA \otimes (\torsorB \otimes \torsorC) \cong (\torsorA \otimes \torsorB) \otimes \torsorC$, $\torsorA \otimes 1 \cong \torsorA$, and $\torsorA \otimes \torsorA^{-1} \cong 1$.}
\end{itemize}

In the case that $X$ is a group, we have a number of projection and multiplication maps $X^{m+1} \to X^m$ analogous to those given in \textsection\ref{TheFunctorspecHBU2k}.  We use this information to define a symmetric biextension of $C$ by $A$ to be an $A$-torsor $\torsorB \to C^2$, along with a section $t$ of the torsor $\torsorB^{-1} \otimes \tau^* \torsorB \to C^2$, $\tau: C^2 \to C^2$ the flip map, and sections $s_{ij}$ of the following family torsors for each $1 \le i \le 2$, $j = i + 1$: \[\chi_{ij}\torsorB = \frac{m_{ij}^* \torsorB}{\pi_i^* \torsorB \otimes \pi_j^* \torsorB} \to C^3.\]  Because these sections trivialize these torsors, we can translate these into the fiber relations $\torsorB_{c + c', d} \cong \torsorB_{c, d} \otimes \torsorB_{c', d}$, $\torsorB_{c, d + d'} \cong \torsorB_{c, d} \otimes \torsorB_{c, d'}$, and $\torsorB_{c, d} \cong \torsorB_{d, c}$.  These fiber relations express a sort of partial group law defined on $\torsorB$ whenever the two operands share a $C$-component.  Such a $\torsorB$ equipped with a section $s: C^2 \to \torsorB$ is called a split symmetric biextension, and as in the split extension case we can explicitly write out the (partial) group laws as \[(a, x, y) + (b, x', y) = (a + b + f(y)(x, x'), x + x', y),\] \[(a, x, y) + (b, x, y') = (a + b + f(x)(y, y'), x, y + y'),\] where each $f(x)(-,-)$ is a symmetric 2-cocycle.


Now, given a torsor $\torsorB \to C$, we can construct the two torsors $\Lambda \torsorB \to C^2$ and $\Theta \torsorB \to C^3$, called the first and second differences of $\torsorB$ respectively, whose fibers are given by the formulas \[(\Lambda \torsorB)_{x, y} = \frac{\torsorB_{x + y}}{\torsorB_x \otimes \torsorB_y}, (\Theta \torsorB)_{x, y, z} = \frac{\torsorB_{x + y + z} \otimes \torsorB_x \otimes \torsorB_y \otimes \torsorB_z}{\torsorB_{x + y} \otimes \torsorB_{x + z} \otimes \torsorB_{y + z}}.\]  A torsor is said to be rigid when we equip it with a section of the fiber $\torsorB_{0}$; a section of $\Theta \torsorB$ then automatically gives a rigidification of $\torsorB$, $\Lambda \torsorB$, and $\Theta \torsorB$.  The section of $\Theta \torsorB$ itself is said to be rigid when the rigidification section agrees with the induced sections of $i_{s}^* (\Theta \torsorB) \cong (\pi \circ 0)^* (\Theta \torsorB)$, where $i_s: C^2 \to C^3$ includes away from the $s$th component.  A rigid section of $\Theta \torsorB$ corresponds to a kind of symmetric biextension structure on $\Lambda \torsorB$ called a cubical structure.  We pick first fiber relation given in the previous paragraph to derive as an example:
\begin{align*}
\Theta \torsorB = \frac{\torsorB_{x + y + z} \otimes \torsorB_x \otimes \torsorB_y \otimes \torsorB_z}{\torsorB_{x + y} \otimes \torsorB_{x + z} \otimes \torsorB_{y + z}} & \cong 1 \\
\frac{\torsorB_{x + z}}{\torsorB_x \otimes \torsorB_z} \otimes \frac{\torsorB_{y + z}}{\torsorB_y \otimes \torsorB_z} & \cong \frac{\torsorB_{x + y + z}}{\torsorB_{x + y} \otimes \torsorB_z} \\
(\Lambda \torsorB)_{x, z} \otimes (\Lambda \torsorB)_{y, z} & \cong (\Lambda \torsorB)_{x + y, z}.
\end{align*}
Thus, since giving a section of $\Theta \torsorB$ trivializes it, we get a biextension structure on $\Lambda \torsorB$ because of it, and the biextension structure is automatically symmetric by definition of $\Lambda \torsorB$.  In fact, because the maps $(\Lambda \torsorB)_{x, z} \otimes (\Lambda \torsorB)_{y, z} \to (\Lambda \torsorB)_{x + y, z}$ and $(\Lambda \torsorB)_{x, y} \otimes (\Lambda \torsorB)_{z, y} \to (\Lambda \torsorB)_{x + z, y}$ are both determined by the same section of $(\Theta \torsorB)_{x, y, z}$ (``same'' in the sense that the section is rigid, and so it won't matter which we choose), we have that the two evaluations of $f$ in the following two calculations are equal:
\begin{align*}
(g, x, z) +_L (h, y, z) & = (g + h + f(z)(x, y), x + y, z) \\
(g, x, y) +_L (h, z, y) & = (g + h + f(y)(x, z), x + z, y),
\end{align*}
where $+_L$ denotes the action of the isomorphisms $\torsorB_{x, z} \otimes \torsorB_{y, z} \to \torsorB_{x + y, z}$ given by the biextension structure.  Similar equalities occur for other permutations of $x$, $y$, and $z$, resulting in symmetry of $f$ as a function $C^3 \to G$.  This material has all been examined in detail before; see for instance \cite{Bre83} for a thorough treatment of cubical structures in general and \cite{AS01} for their application as in \textsection\ref{TheFunctorspecHBU2k}.

We can use a variation of this construction to form $m$-variable $2$-cocycles $f: C^m \to A$.  Given an $A$-torsor $\torsorB \to C$, let $\Theta^m \torsorB \to C^m$ be defined by the formula \[(\Theta^m \torsorB)_{\bf x} = \bigotimes_{\substack{I \subseteq \{1, \ldots, m\} \\ I \ne \emptyset}} \left( \torsorB_{\sum_{i \in I} {\bf x}_i} \right)^{(-1)^{|I|}}.\]  It's worth noting the following correspondences:
\begin{align*}
\Theta^0 \torsorB & = 1, \\
\Theta^1 \torsorB & = \torsorB, \\
\Theta^2 \torsorB & = \Lambda \torsorB, \\
\Theta^3 \torsorB & = \Theta \torsorB.
\end{align*}
Generalizing the previous definitions in the obvious way, an $m$-extension is a $\torsorB \to C^m$ with sections of $\chi_{ij} \torsorB \to X^{m+1}$ for $1 \le i \le m$, $j = i+1$, and a symmetric $m$-extension is an $m$-extension where $\torsorB_{\bf x} \cong \torsorB_{\sigma {\bf x}}$ for every $\sigma \in S_m$.  Then, a section $s$ of $\Theta^{m+1} \torsorB \to C^{m+1}$ (a sort of higher cubical structure) satisfying $\pi_A s({\bf x}) = \pi_A s(\sigma {\bf x})$ induces a symmetric $m$-extension structure on $\Theta^m \torsorB \to C^m$ in a manner identical to the biextension case.  Again as in the previous cases, the symmetric $m$-extension structure gives rise to a function $f: C^{m-2} \to (C^2 \to A)$ which parameterizes a family of symmetric $2$-cocycles, and as in the biextension case because the same fiber section of $\Theta^{m+1} \torsorB$ determines the action of both $f(\bf{x})$ (here interpreted as a function $f: C^m \to A$) and $f(\sigma \bf{x})$, we find that $f({\bf x}) = f(\sigma {\bf x})$.

We can recast this again, this time in the light of affine schemes: to give a split extension of the group scheme $Z$ by the group scheme $X$ is to give a split extension $Y(R)$ of the groups $Z(R)$ by $X(R)$ naturally in $R$.  This is to say that for every ring map $f: R \to S$ we should have the following corresponding commutative diagram:
\begin{diagram}
0 & \rTo & X(R)        & \rTo & Y(R)        & \rTo & Z(R)        & \rTo & 0 \\
  &      & \dTo>{X(f)} &      & \dTo>{Y(f)} &      & \dTo>{Z(f)} &      &   \\
0 & \rTo & X(S)        & \rTo & Y(S)        & \rTo & Z(S)        & \rTo & 0
\end{diagram}
In addition, we require that $Y(f) \circ s(R) = s(S) \circ Z(f)$, where $s(R): Z(R) \to Y(R)$ is the section associated to the split extension $Y(R)$ of $Z(R)$ by $X(R)$.

Let $\G_a$ denote the functor that sends a $k$-algebra $R$ to its underlying additive group $R_+$, representable by $k[x]$.  If we fix a split extension $Y$ of $\G_a$ by $\G_a$ and pick a $k$-algebra $R$, then the split extension $Y(R)$ associated to $R$ is set isomorphic to $R_+ \times R_+$, and the multiplication map $Y(R) \times Y(R) \to Y(R)$ then corresponds to a map $(R_+ \times R_+)^2 \to R_+ \times R_+$.  We have seen already that the multiplication in $Y(R)$ is determined by its action on elements with zero left-component, say $(0, r)$ and $(0, s)$.  These elements are, by construction of $\G_a(R) = X(R) = Z(R)$, selected by the map \[f: k[a] \otimes_k k[b] = k[a, b] \to (R_+)^2,\] \[ f: a \mapsto r, f: b \mapsto s. \]  By naturality of the scheme assignment,
\begin{align*}
(0, r) +_{Y(R)} (0, s) & = (f(0), f(a)) +_{Y(R)} (f(0), f(b)) \\
& = f \left( (0, a) +_{Y(k[a, b])} (0, b) \right) \\
& = f \left( (g(a, b), a + b) \right),
\end{align*}
where $g$ is the symmetric $2$-cocycle corresponding to the split extension.  Most importantly, $g$ is a map with target $k[a, b]$, and so $g(a, b)$ will be a polynomial over $k$ that universally determines the action of the split scheme extension.  $g$ is easily seen to be symmetric and to satisfy the $2$-cocycle condition.  This same construction can be made for split multiextensions of $\G_a$ by $\G_a$, where the $k$-variable symmetric $2$-cocycle again has a polynomial representation.


\section{Characterization of Additive Cocycles}

\subsection{Preliminaries}\label{Preliminaries}

We first introduce the central constructs and notations we will use throughout the paper, most importantly that of multi-indices and number theoretic functions on them, in particular the notion of carry-count.

\begin{defn}\label{MultiindexDefns} 
A \emph{multi-index} of weight $n$ and length $k$ is a $k$-tuple of elements of $\N_0=\N\cup\{ 0\}$ of the form $\lambda = (\lambda_1, \lambda_2, \ldots, \lambda_k)$ that satisfies $\sum_i \lambda_i = n$.  We further say that $\lambda$ is a \emph{power-of-$p$ multi-index} when there exist $a_i \in \N$ such that $\lambda_i = p^{a_i}$ for all $i$.  We denote the length as $\length{\lambda} = k$ and the weight as $\weight{\lambda} = n$.

We define the following operations over multi-indices:

\begin{itemize}
{\item {\bf Exponentiation:} ${\bf x}^\lambda = x_1^{\lambda_1}x_2^{\lambda_2}\cdots x_k^{\lambda_k}$, where ${\bf x} = (x_1, x_2, \ldots, x_k)$.}
{\item {\bf Permutation:} $\sigma \lambda = (\lambda_{(\sigma 1)}, \lambda_{(\sigma 2)}, \ldots, \lambda_{(\sigma k)})$, for $\sigma \in S_k$.}
{\item {\bf Membership:} We write $a \in \lambda$ when there is some $i$ for which $a = \lambda_i$.}
{\item {\bf Concatenation:} $\lambda \cup \mu = (\lambda_1, \ldots, \lambda_i, \mu_1, \ldots, \mu_j)$.}
{\item {\bf Difference:} $\lambda \setminus \mu = \lambda'$ is defined to be the unique unordered multi-index such that $\mu \cup \lambda' = \lambda$ (again up to reordering).  For example, $(2, 2, 1) \setminus (2, 1) = (2)$.}
{\item {\bf Ring extension:} $A[{\bf x}] = A[x_1, \ldots, x_k]$ for ${\bf x} = (x_1, \ldots, x_k)$.}
{\item {\bf Map to monomials:} We define $\tau(\lambda)$ to be the polynomial $\sum_{\sigma \in S_k} {\bf x}^{\sigma \lambda}$ once divided by the gcd of the coefficients.  For example, we provide these expansions:
\begin{align*}
\tau(2, 1, 1) & = x_1^2 x_2 x_3 + x_1 x_2^2 x_3 + x_1 x_2 x_3^2 \\
\tau(2, 2, 2) & = x_1^2 x_2^2 x_3^2 \\
\tau(1, 2, 3) & = x_1 x_2^2 x_3^3 + x_1 x_2^3 x_3^2 + x_1^2 x_2 x_3^3 + x_1^3 x_2 x_3^2 + x_1^2 x_2^3 x_3 + x_1^3 x_2^2 x_3
\end{align*}}
{\item {\bf Factorial:} We define $\lambda ! = \prod_i \lambda_i!$.}
\end{itemize}

In addition, there are a handful of useful number-theoretic constructs that can be formulated in terms of multi-indices:
\begin{itemize}
{\item {\bf Partitions:} When all entries of a multi-index of weight $n$ are positive and listed in descending order, it is called a \emph{partition of n} and is denoted $\lambda \vdash n$.}
{\item {\bf Multinomial coefficients:} For a multi-index $\lambda$, let ${\weight{\lambda} \choose \lambda}$ denote the integer $(\weight{\lambda})!(\lambda !)^{-1}$.  Note that \[{n+m \choose (m, n)} = {n + m \choose m} = {n + m \choose n}\] corresponds with the usual binomial coefficients.}
{\item {\bf Carry count:} The number of times one carries when calculating the base $p$ sum $\sum_i \lambda_i$ is denoted $\alpha_p(\lambda)$.  It is well known that this can be formalized as the number of times ${\weight{\lambda} \choose \lambda}$ is divisible by $p$.  This is a straightforward generalization of a result due to Kummer, originally found in \cite{Kum852}.  A particularly useful property is that for two multi-indices $\lambda, \mu$ we have $\alpha_p(\lambda \cup \mu) = \alpha_p((\weight{\lambda}) \cup \mu) + \alpha_p(\lambda)$, corresponding to associativity of addition.}
{\item {\bf Digital sum:} The digital sum of a number $n$ in base $p$ is denoted $\sigma_p(n)$.  Explicitly, if $n = \sum_{i=0}^\infty a_i p^i$ for $0 \le a_i < p$, then $\sigma_p(n) = \sum_{i=0}^\infty a_i$.}
{\item {\bf Base-$p$ representation:} Given $n \in \N$, let $\rho_p(n)$ be the power-of-$p$ multi-index such that $\weight{\rho_p(n)} = n$ and $\rho_p(n)$ has minimal length (i.e., $\sigma_p(n) = \length{\rho_p(n)}$).  For example, we can compute $\rho_3(16) = (9, 3, 3, 1)$.  $(9, 3, 1, 1, 1, 1)$ is another power-of-$3$ multi-index with weight $16$, but it does not have minimal length.  This corresponds in an obvious way to the base-$p$ expansion of $n$ given by $n = \sum_{i=0}^\infty a_i p^i$ for $0 \le a_i < p$, where $p^i$ appears $a_i$ many times in $\rho_p(n)$.}
\end{itemize}
\end{defn} 

\begin{defn}\label{CarryMinimalDefn} 
We say that a partition $\lambda$ of weight $n$ and length $k$ has \emph{carry-minimal sum in base $p$} or is \emph{$p$-carry minimal} when $\alpha_p(\lambda)$ is minimal in the sense of $\alpha_p(\lambda) = \min \{\alpha_p(\lambda ') \mid \lambda' \vdash n, \length{\lambda'} = k\}$.  For example, $\alpha_3(9,2,1)=1$, and is $3$-carry minimal.  $\alpha_3(8,3,1) = 2$, and so because $(8,3,1)$ is of the same weight and length as $(9,2,1)$, it is not $3$-carry minimal.
\end{defn} 

\begin{defn}\label{ProjDefn} 
Throughout this paper, we will use \emph{ring} to mean commutative ring with unit.  Given a ring $A$ and an ideal $I \subseteq A$ we will use $\pi_I: A \to A / I$ to denote the natural homomorphism with kernel $I$.  In the event $I=\langle a\rangle,$ $a\in A$, we denote $\pi_I$ by $\pi_a$.
\end{defn} 

\begin{defn}\label{SymmDefn} 
We say an $k$-variable polynomial is \emph{symmetric} if $f({\bf x}) = f(\sigma {\bf x})$ for all $\sigma \in S_k$.
\end{defn} 

\begin{rem}\label{Gradation}
The $A$-algebra of symmetric multivariate polynomials has two natural gradations, one corresponding to degree and one corresponding to number of variables.
\end{rem}

\begin{defn}\label{SymmBasisDefn} 
$\tau$ surjects onto a basis for symmetric polynomials.  When restricted to $k$-variable polynomials of homogenous degree $n$, we call it the \emph{monomial symmetric basis on $k$ variables}, and denote it as $B_k^n$.
\end{defn} 

\begin{defn}\label{ACocycleDefn} 
The $m$-\emph{coboundary map}, denoted $\coboundarymap_m$, is a map of modules that operates on polynomials of $k \ge m$ variables and is defined by
\begin{align*}
\coboundarymap_m(f) & = f(x_1, \ldots, x_k) \\
& + \sum_{i=1}^m (-1)^i f(x_0, x_1, \ldots, x_{i-2}, x_{i-1} + x_i, x_{i+1}, \ldots, x_k) \\
& + (-1)^{m+1} f(x_0, x_1, \ldots, x_{m-1}, x_{m+1}, \ldots, x_k).
\end{align*}

It is easy to see that $\coboundarymap_m$ sends polynomials of homogenous degree $n$ in $k$ variables to polynomials of homogenous degree $n$ in $(k+1)$ variables.  In addition, $\coboundarymap_m \circ \coboundarymap_{m-1} = 0,$ or $\coboundarymap$ is a differential.  We define the \emph{($m$-)cocycle condition} as applied to a polynomial $f$ to mean $\coboundarymap_m f = 0$, and say that ``$f$ satisfies the ($m$-)cocycle condition'' or ``$f$ is an ($m$-)cocycle''.\end{defn} 

We also define a number of one-time use functions.  We will reuse the symbol $\theta$ for all of them to save naming clutter.  Which $\theta$ we intend will be clear, since a definition will be given in the theorem statement.


\subsection{Basic Results}\label{BasicResults}

Beginning at this point, we strongly suggest that the reader frequently refer to appendix \ref{TablesOfModulesAdditive2Cocycles}, where we list bases for $\ker \coboundarymap_2$ restricted to particular degrees and dimensions with coefficients in $\Z_2$, $\Z_3$, and $\Z_5$.  The structure of the data guides the structure of the proofs to follow, and to reinforce this we will provide some examples inlined with the body of the text.

Given that $\coboundarymap_m$ is a graded map of modules, we then seek to further decompose the problem into more workable pieces.  There is a basis for the module of all polynomials $A[{\bf x}]$ given by $\{{\bf x}^\lambda\}_\lambda$ for all multi-indices $\lambda$.  We would like to have the additional ability to consider our monomial symmetric basis elements one monomial at a time, but we run into the complication that there exist monomials shared between different monomial symmetric basis elements depending upon the dimension of the grading -- for instance, $\tau(1, 1, 0)$ and $\tau(1, 1)$ share the term $x_1 x_2$.  To eliminate this problem, we show in general that symmetrized monomials with terms not mixed in every variable cannot participate in polynomials in $\ker \coboundarymap_m$.

\begin{lem}\label{NoInitialZeroes} 
If $\lambda$ is of the form $(0, \lambda_2, \lambda_3, \ldots, \lambda_k)$, then ${\bf x}^\lambda$ cannot contribute to a linear combination of monomials in the kernel of $\coboundarymap$.
\end{lem}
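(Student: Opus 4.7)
The plan is to pick a single monomial in the expansion of $\coboundarymap_2(f)$ whose coefficient isolates $c_\lambda$, so that the cocycle condition forces $c_\lambda = 0$.

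To begin, I would write $f = \sum_\nu c_\nu \mathbf{x}^\nu$ and invoke the symmetric setting of the paper (see Definition \ref{SymmDefn}), which gives $c_\nu = c_{\sigma \nu}$ for every $\sigma \in S_k$. Given $\lambda = (0, \lambda_2, \ldots, \lambda_k)$ with $\weight{\lambda} > 0$, some $\lambda_j$ is positive, and composing with the transposition swapping positions $2$ and $j$ rearranges $\lambda$ so that $\lambda_2 > 0$ without changing $c_\lambda$. So I may assume $\lambda_2 > 0$.

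Next I would compute the coefficient of the monomial $\mathbf{x}^\lambda = x_2^{\lambda_2} x_3^{\lambda_3} \cdots x_k^{\lambda_k}$ (which involves neither $x_0$ nor $x_1$) in each of the four summands of $\coboundarymap_2(f)$. In $f(x_1, \ldots, x_k)$ only $\nu = \lambda$ (with $\nu_1 = 0$) matches, contributing $+c_\lambda$. In $-f(x_0+x_1, x_2, \ldots, x_k)$, the expansion of $(x_0+x_1)^{\nu_1}$ produces $x_0$ or $x_1$ unless $\nu_1 = 0$, so again only $\nu = \lambda$ contributes, this time with coefficient $-c_\lambda$. In $+f(x_0, x_1+x_2, x_3, \ldots, x_k)$, the factor $x_0^{\nu_1}$ forces $\nu_1 = 0$, and the $\binom{\nu_2}{0} x_2^{\nu_2}$ piece of $(x_1+x_2)^{\nu_2}$ picks out $\nu_2 = \lambda_2$, giving $+c_\lambda$. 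Finally, $-f(x_0, x_1, x_3, \ldots, x_k)$ contains no $x_2$ at all, so it contributes $0$ because $\lambda_2 > 0$. Summing yields the total coefficient $c_\lambda - c_\lambda + c_\lambda - 0 = c_\lambda$, and $\coboundarymap_2(f) = 0$ forces $c_\lambda = 0$.

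I expect no serious obstacle; the proof is essentially bookkeeping across the four expansions. The one delicate point is the initial symmetry reduction to $\lambda_2 > 0$: if instead $\lambda_2 = 0$, the fourth summand also contributes $-c_\lambda$, and the four coefficients cancel identically, giving no information. This indicates that the symmetric hypothesis is genuinely needed here, since without it the lone monomial $x_k^n$ (corresponding to $\lambda = (0, \ldots, 0, n)$) already lies in $\ker \coboundarymap_2$ on its own.
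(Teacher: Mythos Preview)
Your argument is correct, and the core strategy---pinning down the coefficient of the single target monomial $x_2^{\lambda_2}\cdots x_k^{\lambda_k}$ in the image---is exactly the paper's. Two differences are worth noting.

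First, the paper's proof is written for general $\coboundarymap_m$, while you treat only $\coboundarymap_2$. The paper expands $\coboundarymap_m(\mathbf{x}^\lambda)$ directly, observes that after the first two summands cancel there is a residual term $x_2^{\lambda_2}\cdots x_k^{\lambda_k}$, and then asserts that no other source monomial $\mathbf{x}^\mu$ can contribute this target term in its image. Your four-summand coefficient computation is precisely the $m=2$ unpacking of that assertion, phrased from the side of an arbitrary $f$ rather than a single monomial.

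Second, you add a symmetry hypothesis on $f$ (to arrange $\lambda_2>0$) that the paper's lemma does not carry. Your closing observation is well taken: without it the bare statement for $m=2$ is actually false, since $\mathbf{x}^{(0,0,\lambda_3,\ldots,\lambda_k)}$ already lies in $\ker\coboundarymap_2$. The paper's own proof silently needs the residual term to survive (which requires $\lambda_2>0$, or more generally that not all of $\lambda_2,\ldots,\lambda_m$ vanish) but does not flag this. So you have proven a slightly weaker but genuinely correct statement, and since the only downstream use is Corollary~\ref{NoSymmetricZeroes}, your extra hypothesis costs nothing.
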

\begin{proof}
The application $\coboundarymap_m({\bf x}^\lambda)$ yields the following sum:
\begin{align*}
\coboundarymap_m({\bf x}^\lambda) & = x_2^{\lambda_2} x_3^{\lambda_3} \cdots x_k^{\lambda_k} - x_2^{\lambda_2} x_3^{\lambda_3} \cdots x_k^{\lambda_k} + (x_1 + x_2)^{\lambda_2} x_3^{\lambda_3} \cdots x_k^{\lambda_k} \\
& + \sum_{i=3}^m (-1)^i x_1^{\lambda_2} \cdots x_{i-2}^{\lambda_{i-1}}(x_{i-1} + x_i)^{\lambda_i}x_{i+1}^{\lambda_{i+1}} \cdots x_k^{\lambda_k} \\
& + (-1)^{m+1} x_1^{\lambda_2} \cdots x_{m-1}^{\lambda_m} x_{m+1}^{\lambda_{m+1}} \cdots x_k^{\lambda_k}.
\end{align*}
Ignoring the monomials with terms mixed in $x_1$ and $x_2$ (equivalently, working modulo the ideal $\langle x_0 \cdots x_k \rangle$), we see that we have a residual term of $x_2^{\lambda_2} x_3^{\lambda_3} \cdots x_k^{\lambda_k}$.  Any other choice of $\lambda$ will yield summands distinct from this monomial, therefore ${\bf x}^\lambda$'s image cannot be completely cancelled by any other monomial's image under $\coboundarymap_m$.  Thus no linear combination of monomials containing ${\bf x}^\lambda$ can lie in the kernel of $\coboundarymap_m$.
\end{proof} 

\begin{cor}\label{NoSymmetricZeroes} 
If $0 \in \lambda$, then $\tau \lambda$ cannot contribute to any symmetric cocycle.
\end{cor}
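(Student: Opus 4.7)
My plan is to deduce the corollary directly from Lemma \ref{NoInitialZeroes} by exploiting the orbit structure of monomials under the symmetric group action. The key observation is that distinct unordered multi-indices $\mu$ and $\mu'$ give rise to disjoint $S_k$-orbits of monomials, so $\tau\mu$ and $\tau\mu'$ share no monomial summands. Consequently, when a symmetric polynomial is written both as $\sum_\mu c_\mu \tau\mu$ and as a sum $\sum_\nu b_\nu {\bf x}^\nu$ over all monomials, the coefficient $b_\nu$ equals $c_\lambda$ where $\lambda$ is the partition obtained by sorting $\nu$ in decreasing order.

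Given $\lambda$ with $0 \in \lambda$, I will reorder its entries so that $\lambda_1 = 0$. Since $\tau\lambda$ depends only on the underlying unordered multi-index, this reordering leaves $\tau\lambda$ unchanged, and the specific monomial ${\bf x}^\lambda = x_2^{\lambda_2}\cdots x_k^{\lambda_k}$ then appears in $\tau\lambda$ with coefficient $1$.

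Now suppose for contradiction that some symmetric cocycle $f = \sum_\mu c_\mu \tau\mu$ has $c_\lambda \neq 0$. By the orbit-disjointness observation above, the coefficient of ${\bf x}^\lambda$ in the expansion of $f$ as a sum of monomials is precisely $c_\lambda \neq 0$. But Lemma \ref{NoInitialZeroes} asserts that no linear combination of monomials in $\ker \coboundarymap_m$ can have a nonzero coefficient on ${\bf x}^\lambda$ when $\lambda_1 = 0$. This contradiction forces $c_\lambda = 0$.

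There is no real obstacle here: the entire argument is a routine translation of the monomial-level statement from the lemma into the symmetric-polynomial level, using nothing more than the definition of $\tau$ and the fact that $S_k$-orbits of monomials partition the monomial basis. The only point that deserves a sentence of verification is the orbit disjointness, which is immediate.
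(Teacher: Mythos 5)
Your argument is correct and is essentially the paper's own proof: the paper likewise permutes $\lambda$ so that the zero entry sits in the first slot and then invokes Lemma \ref{NoInitialZeroes}. The extra detail you supply about disjointness of $S_k$-orbits, justifying why the monomial-level conclusion transfers to the coefficient $c_\lambda$ in the symmetric expansion, is a reasonable elaboration of a step the paper leaves implicit.
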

\begin{proof}
Select a $\sigma \in S_k$ such that $\sigma \lambda$ is of the form $(0, \lambda_2, \ldots, \lambda_k)$ and apply \ref{NoInitialZeroes}.
\end{proof} 

We can then restrict our attention to multi-indices $\lambda$ that satisfy $0 \not\in \lambda$.  Since the presence of a zero was the only thing that prevented the entire sum from telescoping in \ref{NoInitialZeroes}, we also note that any unmixed terms in the na\"ive expansion of $\coboundarymap_m \tau \lambda$ will vanish.

\begin{lem}\label{OnlyMixedTerms} 
Let $\lambda$ be a multi-index with $0 \not\in \lambda$, $\length{\lambda} = k$.  Then $\coboundarymap_m({\bf x}^\lambda)$ will contain only monomials mixed in all of $x_0, \ldots, x_k$.
\end{lem}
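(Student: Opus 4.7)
The plan is to show that $x_j \mid \coboundarymap_m({\bf x}^\lambda)$ for each $j \in \{0, 1, \ldots, k\}$; since the variables $x_0, \ldots, x_k$ are pairwise coprime in $A[x_0, \ldots, x_k]$, this is equivalent to $x_0 x_1 \cdots x_k \mid \coboundarymap_m({\bf x}^\lambda)$, which is precisely the statement that every monomial in the image is mixed in all of $x_0, \ldots, x_k$. Equivalently, I verify that the substitution $x_j = 0$ annihilates $\coboundarymap_m({\bf x}^\lambda)$ for every $j$.

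Fix $j$ and examine each of the $m+2$ summands of $\coboundarymap_m({\bf x}^\lambda)$ after the substitution $x_j = 0$. Because $0 \notin \lambda$, any summand in which $x_j$ appears as a pure factor with positive exponent dies outright, so the only possible survivors are those in which $x_j$ either does not appear at all in that summand, or appears only inside a factor $(x_{\ell - 1} + x_\ell)^{\lambda_\ell}$, where the substitution selects the single non-$x_j$ branch of the binomial expansion. A case analysis on $j$ now exhibits a matched cancellation in each regime. For $j = 0$, the leading summand $f(x_1, \ldots, x_k)$ survives and is matched against the $i = 1$ middle summand $-f(x_0 + x_1, x_2, \ldots, x_k)|_{x_0 = 0}$, while every other summand carries the factor $x_0^{\lambda_1}$ and dies. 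For $j = m$, the $i = m$ middle summand and the final summand both collapse to $x_0^{\lambda_1} \cdots x_{m-2}^{\lambda_{m-1}} x_{m-1}^{\lambda_m} x_{m+1}^{\lambda_{m+1}} \cdots x_k^{\lambda_k}$ with opposite signs $(-1)^m$ and $(-1)^{m+1}$. For an interior $1 \leq j \leq m - 1$, the $i = j$ and $i = j + 1$ middle summands both collapse to the same monomial $x_0^{\lambda_1} \cdots x_{j-1}^{\lambda_j} x_{j+1}^{\lambda_{j+1}} \cdots x_k^{\lambda_k}$, one from the $x_{j-1}$-branch of $(x_{j-1} + x_j)^{\lambda_j}$ and the other from the $x_{j+1}$-branch of $(x_j + x_{j+1})^{\lambda_{j+1}}$, with opposite signs $(-1)^j$ and $(-1)^{j+1}$. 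For $m + 1 \leq j \leq k$, every summand contains $x_j^{\lambda_j}$ as a pure factor and dies individually.

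In every case the total evaluation at $x_j = 0$ is zero, so $x_j \mid \coboundarymap_m({\bf x}^\lambda)$ for all $j$, which finishes the argument. The only delicate step is the interior case, where one must carefully pair the $i = j$ and $i = j + 1$ middle summands and verify that their reduced monomials coincide as polynomials; this is the same telescoping pattern underlying the proof of \ref{NoInitialZeroes}, now executed at each internal variable simultaneously, with the hypothesis $0 \notin \lambda$ serving precisely to ensure that no unmixed residual escapes the pairing.
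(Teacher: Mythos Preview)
Your proof is correct and rests on the same telescoping cancellation as the paper's. The only difference is organizational: the paper works modulo the ideal $\langle x_0 \cdots x_k \rangle$ in a single pass, listing all unmixed terms at once and observing that the $i$th term of one family cancels the $(i+1)$th term of the other; you instead fix one variable $x_j$ at a time, substitute $x_j = 0$, and exhibit the single surviving pair that cancels. Both arguments identify exactly the same pairings, so this is essentially the same proof written variable-by-variable rather than all at once.
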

\begin{proof}
Again working modulo the ideal $\langle x_0 \cdots x_k \rangle$, $\coboundarymap_m({\bf x}^\lambda)$ can be rewritten as:
\begin{align*}
x_1^{\lambda_1} \cdots x_k^{\lambda_k} & +
\sum_{i=1}^m (-1)^i x_0^{\lambda_1} \cdots x_{i-1}^{\lambda_i} x_{i+1}^{\lambda_{i+1}} \cdots x_k^{\lambda_k} \\
& + \sum_{i=1}^m (-1)^i x_0^{\lambda_1} \cdots x_{i-2}^{\lambda_{i-1}} x_i^{\lambda_i} \cdots x_k^{\lambda_k} \\
& + (-1)^{m+1} x_0^{\lambda_1} \cdots x_{m-1}^{\lambda_m} x_{m+1}^{\lambda_{m+1}} \cdots x_k^{\lambda_k}.
\end{align*}
The $i$th term of the first sum cancels with the $(i+1)$th term of the second, so the expression telescopes and all unmixed terms vanish.
\end{proof} 

If we can find polynomials for which $f(a + b) = f(a) + f(b)$, then we can immediately apply \ref{OnlyMixedTerms} to demonstrate that these polynomials do in fact lie in $\ker \coboundarymap_m$.  If we restrict our attention to working in a ring of characteristic $p \ne 0$, then there are a few obvious examples of such polynomials.

\begin{cor}\label{InitialPOPAreCocycles} 
Asymmetric monomials associated to multi-indices of the form $(p^{a_1}, \ldots, p^{a_m}, b_{m+1}, \ldots, b_k)$ are $m$-cocycles in a coefficient ring of characteristic $p$.
\end{cor}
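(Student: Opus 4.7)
The plan is to show directly that $\coboundarymap_m(\mathbf{x}^\lambda) = 0$ in characteristic $p$, by leveraging the Frobenius identity $(x+y)^{p^a} = x^{p^a} + y^{p^a}$ on each of the first $m$ arguments of $f = \mathbf{x}^\lambda$ and watching the resulting monomials cancel in pairs.

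First I would expand $\coboundarymap_m(\mathbf{x}^\lambda)$ using the definition. The leading term is $x_1^{\lambda_1} \cdots x_k^{\lambda_k}$ (missing $x_0$), the trailing term is $x_0^{\lambda_1}\cdots x_{m-1}^{\lambda_m} x_{m+1}^{\lambda_{m+1}} \cdots x_k^{\lambda_k}$ (missing $x_m$), and the $i$-th middle summand (for $1 \le i \le m$) contains a single factor $(x_{i-1}+x_i)^{\lambda_i}$ raised to the $i$-th exponent of $\lambda$. Because $\lambda_i = p^{a_i}$, Frobenius over a characteristic-$p$ ring reduces this to $x_{i-1}^{p^{a_i}} + x_i^{p^{a_i}}$, so the $i$-th middle summand splits into two monomials: one in which $x_{i-1}^{\lambda_i}$ replaces the merged factor (no $x_i$ present), and one in which $x_i^{\lambda_i}$ replaces it (no $x_{i-1}$ present).

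The essential observation is then a pairing: for each $j \in \{0, 1, \ldots, m\}$, exactly two of the resulting $2(m+1)$ monomials are missing $x_j$. For $1 \le j \le m-1$ these come from the ``right half'' of the $i=j$ expansion and the ``left half'' of the $i=j+1$ expansion; the endpoints $j=0$ and $j=m$ match the leading and trailing terms with the corresponding halves of the $i=1$ and $i=m$ expansions. A brief sign check comparing $(-1)^i$ with $(-1)^{i+1}$ confirms the two members of each pair carry opposite signs and thus cancel, so the whole coboundary is zero.

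The main obstacle, such as it is, is purely combinatorial bookkeeping: one must keep careful track of the index shift between the $k$ arguments of $\lambda$ and the $k+1$ variables $x_0, \ldots, x_k$ appearing after substitution, and verify the endpoint cases line up. No conceptual difficulty is expected, and the argument is fully consistent with Lemma \ref{OnlyMixedTerms}, which already establishes exactly this sort of telescoping on the unmixed part of the expansion; what Frobenius contributes here is the observation that the mixed part also vanishes when the first $m$ exponents of $\lambda$ are powers of $p$.
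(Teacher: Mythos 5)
Your proof is correct and is essentially the paper's own argument: the paper likewise invokes the Frobenius identity $(a+b)^{p^{a_i}} = a^{p^{a_i}} + b^{p^{a_i}}$ to conclude that no mixed monomials appear, and then appeals to Lemma \ref{OnlyMixedTerms} for exactly the telescoping cancellation of unmixed terms that you carry out explicitly. The only difference is that you redo that bookkeeping by hand rather than citing the lemma.
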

\begin{proof}
For any $a, b$ in a ring of characteristic $p$, recall that $(a+b)^p = a^p + b^p$.  This then follows immediately from \ref{OnlyMixedTerms}.
\end{proof} 

\begin{cor}\label{POPAreCocycles} 
The symmetrized polynomial $\tau(\lambda)$ for $\lambda$ a power-of-$p$ multi-index is a cocycle under $\coboundarymap_m: \Z_p[{\bf x}] \to \Z_p[x_0, {\bf x}]$.
\end{cor}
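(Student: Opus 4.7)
The plan is to bootstrap directly from Corollary \ref{InitialPOPAreCocycles} using $\Z_p$-linearity of $\coboundarymap_m$. First I would observe that the factor $d^{-1}$ in the definition of $\tau(\lambda)$ exactly collapses the repetitions appearing in $\sum_{\sigma \in S_k} {\bf x}^{\sigma \lambda}$, so that $\tau(\lambda)$ is nothing more than the sum of the distinct monomials ${\bf x}^\mu$ as $\mu$ ranges over the $S_k$-orbit of $\lambda$. Writing $\tau(\lambda)$ this way, as a plain sum of monomials with unit coefficients, sidesteps any question about whether the stabilizer size $d$ happens to be a unit in $\Z_p$.

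Next, because $\lambda$ is a power-of-$p$ multi-index, every entry of every permutation $\sigma \lambda$ is again a power of $p$. In particular the first $m$ entries of each orbit element $\mu$ are powers of $p$, which is precisely the hypothesis of \ref{InitialPOPAreCocycles}. That corollary then supplies $\coboundarymap_m({\bf x}^\mu) = 0$ in $\Z_p[x_0, {\bf x}]$ for every $\mu$ in the orbit.

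Finally, since $\coboundarymap_m$ is $\Z_p$-linear, summing these monomial identities across the orbit yields $\coboundarymap_m(\tau(\lambda)) = 0$, which is exactly the claim. I do not expect any real obstacle here; the one observation doing the work is that the property of being power-of-$p$ is stable under permutation, so the hypothesis of \ref{InitialPOPAreCocycles} applies uniformly to every monomial appearing in $\tau(\lambda)$.
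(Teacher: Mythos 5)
Your proof is correct and follows the same route as the paper: each monomial ${\bf x}^{\sigma\lambda}$ is a cocycle by \ref{InitialPOPAreCocycles} since being power-of-$p$ is permutation-invariant, and linearity of $\coboundarymap_m$ finishes the argument. Your extra remark that the normalization in $\tau$ merely collapses repeated monomials to unit coefficients (so no invertibility of the stabilizer order in $\Z_p$ is needed) is a worthwhile clarification the paper leaves implicit, but it does not change the argument.
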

\begin{proof}
Each element of this sum is a cocycle by \ref{InitialPOPAreCocycles}, and $\coboundarymap_m$ is a linear map.
\end{proof} 

We thus have a few critical examples of symmetric cocycles in the modular case of $\Z_p[{\bf x}]$.  We will also require some rational cocycles, which have been classified previously in \cite{AHS01}.  We reconstruct what we will need here.

\begin{defn}\label{zetaDefn} 
$\zeta_k^n \in \Z[{\bf x}]$ denotes the polynomial $(i \coboundarymap_1)^{k-1} x^n$ when divided by the gcd of the resultant monomial coefficients, where $i: \Z[x_0, \ldots] \to \Z[x_1, \ldots]$ acts by $i: x_i \mapsto x_{i+1}$ and exponentiation denotes repeated application.  Because $\coboundarymap$ is a differential, this is a $2$-cocycle.\end{defn} 

The authors of \cite{AHS01} go on to demonstrate that $\ker \coboundarymap_2$ (for $\coboundarymap_2: \Z[{\bf x}] \to \Z[x_0, {\bf x}]$) is in fact generated by these $\zeta_k^n$; the reader interested in a classification of the integral cocycles can find a proof at the beginning of \cite{AHS01}'s appendix A.  $\zeta_k^n$ is an interesting polynomial on its own; when we expand the $\coboundarymap_1$ applictions in \ref{zetaDefn}, we find the expression takes the form: \[\zeta_k^n =  d^{-1} \sum_{\substack{X \subseteq \{x_1, \ldots, x_k\} \\ X \ne \emptyset}} \left( (-1)^{|X|}\cdot\left(\sum_{x \in X} x\right)^n\right),\]  for some $d \in \Z$.  When the sums are expanded, we find \[\zeta_k^n = \left(\gcd_{0 \not\in \lambda} {n \choose \lambda}\right)^{-1}\sum_{\substack{0 \not \in \lambda \\ \lambda_i \ge \lambda_{i+1}}}{n \choose \lambda}\tau(\lambda).\]  Using this second expansion, we see immediately from the formal definition of $p$-carry-count that monomial summands of $\zeta_k^n$ of the form $c_\lambda {\bf x}^\lambda$ belonging to non-carry-minimal $\lambda$ will vanish under $\pi_p$ while carry-minimal $c_\lambda$ will remain non-zero.


\subsection{Gathering}\label{Gathering}

Beginning with the symmetric polynomials guaranteed to us to be cocycles by \ref{POPAreCocycles}, we investigate how to modify and extend these to form new cocycles.  Looking to previous classifications for clues, \cite{AHS01} employs the fact that their integral cocycles $\zeta_k^n$ form modular cocycles when their coefficient ring $\Z$ is projected down to $\Z_p$.  It is obvious that the following diagram commutes for a general coefficient ring $A$ and ideal $I \subseteq A$:
\begin{diagram}
A[{\bf x}]     & \rTo^{\coboundarymap_m} & A[{\bf x}, x_0]     \\
\dTo >{\pi_I}  &                         & \dTo >{\pi_I}       \\
(A/I)[{\bf x}] & \rTo^{\coboundarymap_m} & (A/I)[{\bf x}, x_0]
\end{diagram}

Noting that the $m$-coboundary map when applied to $A[{\bf x}]$, $\length{{\bf x}} = k$ leaves the remaining $k - m$ variables undisturbed (i.e., for $i > m$ the evaluation maps in $\coboundarymap_m$ act on $x_i$ by sending it to $x_i$), we can decompose the polynomial ring $A[x_1, \ldots, x_k]$ into the ring extension $(A[x_{m+1}, \ldots, x_k])[x_1, \ldots, x_m]$, bringing the variables undisturbed by the cocycle condition into the coefficient ring.  This effectively rewrites a $k$-variable $m$-cocycle $f$ as \[f = \sum_{\length{I} = k - m} (x_{m+1}, \ldots, x_k)^I \cdot f_I(x_1, \ldots, x_m),\] where each $f_I$ is an $m$-variable $m$-cocycle.

In this new coefficient ring we have a wide range of nontrivial ideals to select; picking ideals of the form $I = \langle x_i - x_j \rangle$ with $m < i < j \le k$ will take $k$-variable symmetric cocycles to $(k-1)$-variable asymmetric cocycles, in effect giving us approximate information about the lower dimensional cases.  $\pi_I$ is the operation that we call ``gathering.''  Denoting $k$-dimensional $m$-cocycles over $A$ as $Z^k(A)$ and their symmetric subset as $Z^k_*(A)$, the following (noncommutative) diagram paints a portrait of what we have so far:

\begin{diagram}
Z^{m+1}_*(\Z)   &           & \rTo_{\coboundarymap_1} &               & Z^{m+2}_*(\Z)   &           & \rTo_{\coboundarymap_1} &               & Z^{m+3}_*(\Z)   & \rTo     & \cdots \\
                &           &                         &               &                 &           &                         &               &                 &          &        \\
\dTo_{\pi_p}    &           & Z^{m+1}(\Z_p)           &               & \dTo_{\pi_p}    &           & Z^{m+2}(\Z_p)           &               & \dTo_{\pi_p}    &          & \cdots \\
                & \ldDashto &                         & \luTo^{\pi_I} &                 & \ldDashto &                         & \luTo^{\pi_I} &                 &          &        \\
Z^{m+1}_*(\Z_p) &           & \lDashto                &               & Z^{m+2}_*(\Z_p) &           & \lDashto                &               & Z^{m+3}_*(\Z_p) & \lDashto & \cdots
\end{diagram}

In this section we seek to construct the dashed maps, and to do so understanding the exact nature of $\pi_I$'s action will help.  Selecting $\tau(9, 1, 1, 1)$ as an example in the case $k = 4$ and $p = 3$, its gathering is \[\pi_I \tau(9, 1, 1, 1) = x_1^9 x_2 x_3^2 + x_1 x_2^9 x_3^2 + 2 x_1 x_2 x_3^{10}\] for $I = \langle x_3 - x_4 \rangle \subseteq (\Z_3[x_3, x_4])[x_1, x_2]$.  Obviously this polynomial is no longer symmetric, and so we seek to find an appropriate symmetrization (i.e., an action for the dashed map).  It is almost immediately obvious that the na\"ive symmetrization $\sum_{\sigma \in S_{k-1}} (\pi_I f)(\sigma {\bf x})$ will not be sufficient in general (see appendix \ref{Char5Table} for a plethora of complicated low-dimensional examples), and we must be more creative.

The other information we have at this point in dimension $k-1$ is that $\pi_p \zeta_{k-1}^n$ is a symmetric cocycle.  As we will shortly prove, gathering cocycles sufficiently ``near'' a power-of-$p$ cocycle $\tau \lambda$ yields a sum of monomials $\sum_i {\bf x}^{S_i}$ for an indexed set of multi-indices $S$ where each $S_i$ is carry-minimal.  (For example, this holds true in the above case; both $(9, 2, 1)$ and $(10, 1, 1)$ have minimal $3$-carry.)  This means that these monomials appear as summands of $\zeta_k^n$ (for instance, $\zeta_3^{12} = \tau(9, 2, 1) - \tau(10, 1, 1) + \tau(6, 3, 3)$), and so we may be able to use the information contained there to recover symmetric cocycles corresponding to the gathered cocycles.  We begin by formalizing the notion of ``nearness.''

\begin{defn}\label{SplittingDistance} 
Define the function $\phi_p$ from multi-indices to $\N_0$ by \[\phi_p: \lambda \mapsto \sum_{i = 1}^{\length{\lambda}} \sigma_p(\lambda_i) - \length{\lambda} = \ell\left(\bigcup_{i = 1}^{\length{\lambda}} \rho_p(\lambda_i) \right) - \length{\lambda},\] which corresponds to the fewest number of gathering operations required to reach $\lambda$ from a power-of-$p$ multi-index.  $\phi_p(\lambda)$ is called the \emph{splitting distance} of $\lambda$.
\end{defn} 

For example, working in $p = 3$, we have
\begin{align*}
\phi_3(9, 1, 1, 1) & = 0, \\
\phi_3(9, 2, 1) & = 1, \\
\phi_3(10, 2) & = 2, \\
\phi_3(9, 3) & = 0.
\end{align*}

\begin{thm}\label{LargeSplitCausesCarry} 
If $\lambda$ is a multi-index such that $\phi_p(\lambda) > p - 2$ and $\alpha_p(\lambda) > 0$, then $\lambda$ is not carry minimal.
\end{thm}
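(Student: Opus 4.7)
The plan is to exploit the identity
\[
\alpha_p(\lambda) \;=\; \frac{1}{p-1}\left(\sum_i \sigma_p(\lambda_i) \;-\; \sigma_p(n)\right), \qquad n = \weight{\lambda},
\]
which is the form of Kummer's formula recalled in Definition \ref{MultiindexDefns}. Because $\sigma_p(n)$ is fixed when we range over partitions of weight $n$ and length $k$, minimizing $\alpha_p$ is equivalent to minimizing $S(\lambda) := \sum_i \sigma_p(\lambda_i)$. Note that $S(\lambda) = \length{M}$ where $M := \bigcup_i \rho_p(\lambda_i)$, so $S(\lambda) = k + \phi_p(\lambda)$. To show non-minimality, the strategy is to produce a competing partition $\lambda'$ of the same weight and length satisfying $S(\lambda') < S(\lambda)$.

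The key observation is that $\alpha_p(\lambda) > 0$ forces some power $p^a$ to occur in $M$ with multiplicity at least $p$: otherwise every power would appear with multiplicity strictly less than $p$, making $M$ itself the base-$p$ representation of $n$ and forcing $S(\lambda) = \sigma_p(n)$. The second hypothesis $\phi_p(\lambda) > p-2$ is the integer inequality $\length{M} \geq k + p - 1$, so these two inputs combine cleanly.

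The construction is then forced. Pick such an $a$, form $M'$ by removing $p$ copies of $p^a$ from $M$ and adjoining a single copy of $p^{a+1}$. This is a multiset of powers of $p$ with $\sum M' = n$ and $\length{M'} = \length{M} - (p-1) \geq k$, so it admits a partition into $k$ nonempty sub-multisets $M'_1,\ldots,M'_k$. Setting $\lambda'_i := \sum M'_i$ yields a partition of $n$ with $\length{\lambda'} = k$, and by subadditivity of $\sigma_p$ we have $\sigma_p(\lambda'_i) \leq \length{M'_i}$, so
\[
S(\lambda') \;\leq\; \length{M'} \;=\; \length{M} - (p-1) \;<\; \length{M} \;=\; S(\lambda),
\]
witnessing that $\lambda$ is not $p$-carry minimal.

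The only real subtlety is the multiplicity claim in the key observation, but it is immediate once one accepts that the valid base-$p$ representation of $n$ has every digit strictly less than $p$. After that it is bookkeeping: the two hypotheses are designed to balance exactly, with $\alpha_p(\lambda)>0$ guaranteeing the existence of a digit-combining move, and $\phi_p(\lambda) \geq p-1$ providing precisely enough slack that losing $p-1$ slots from $M$ still leaves room for $k$ nonempty parts.
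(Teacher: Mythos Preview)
Your proof is correct and follows essentially the same approach as the paper's. Both arguments observe that $\alpha_p(\lambda)>0$ forces the power-of-$p$ expansion $M=\bigcup_i\rho_p(\lambda_i)$ to contain $p$ copies of some $p^a$, combine those copies into a single $p^{a+1}$, and then use the hypothesis $\phi_p(\lambda)\ge p-1$ to ensure enough room remains to gather down to length $k$. The only real difference is presentational: the paper works via gathering operators $G_{ij}$ and the monotonicity $\alpha_p(G_{ij}\lambda)\le\alpha_p(\lambda)$, whereas you invoke the Legendre--Kummer digit-sum identity directly (note that this identity is not actually stated in Definition~\ref{MultiindexDefns}, only the $p$-adic valuation characterization, so you may want to justify it rather than cite it).
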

\begin{proof}
We define a gathering operator, $G_{ij}$, of a multi-index $\lambda$, $1\le i < j\le\length{\lambda}$, to be \[G_{ij}\lambda=(\lambda_i + \lambda_j) \cup (\lambda \setminus (\lambda_i, \lambda_j)).\]  Note that $\alpha_p(G_{ij}\lambda) \le \alpha_p(\lambda)$ for all gathering operators $G_{ij}$.

Let ${\hat \lambda} = \bigcup_i \rho_p(\lambda_i)$.  Because $\alpha_p({\hat \lambda}) = \alpha_p(\lambda)> 0$ and ${\hat \lambda}$ is a power-of-$p$ multi-index, ${\hat \lambda}$ contains $p$ many copies of some $p^k$.  These $p^k$ can be gathered in $p-1$ steps to form a multi-index $\mu$ that satisfies $\alpha_p(\mu) < \alpha_p(\lambda)$.  We can then apply any gathering operations we like to $\mu$ to achieve a multi-index $\mu'$ with $\length{\mu'} = \length{\lambda}$, and we are still guaranteed that $\alpha_p(\mu') \le \alpha_p(\mu) < \alpha_p(\lambda)$, which means that $\lambda$ cannot be $p$-carry minimal.
\end{proof} 


This technique of forming an alternative gathering guides the structure of many of the remaining proofs.  Using it, we can immediately gain various facts about the carry-minimality of power-of-$p$ multi-indices and their gatherings.

\begin{cor}\label{ExhaustivenessOfPOP} 
For $n$, $k$, $p$ such that there exists a power-of-$p$ partition $\mu \vdash n$ with $\length{\mu} = k$, then any $\lambda$ of the same weight and length that is not power-of-$p$ will not be carry minimal.
\end{cor}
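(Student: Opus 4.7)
The plan is to derive the result from the standard identity linking digital sums to carry counts. Specifically, for any multi-index $\lambda = (\lambda_1,\ldots,\lambda_k)$ of weight $n$, iterating the two-summand identity $\sigma_p(a+b) = \sigma_p(a) + \sigma_p(b) - (p-1)\alpha_p(a,b)$ (which expresses that each base-$p$ carry destroys exactly $p-1$ units of digital sum) gives
\[
\sigma_p(n) \;=\; \sum_{i=1}^{k} \sigma_p(\lambda_i) \;-\; (p-1)\,\alpha_p(\lambda),
\]
so that $\alpha_p(\lambda)$ is an increasing affine function of $\sum_i \sigma_p(\lambda_i)$ once $n$ and $p$ are fixed. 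Minimizing carry count over partitions of $n$ of length $k$ therefore amounts to minimizing $\sum_i \sigma_p(\lambda_i)$.

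Next I would observe that for a partition (so every $\lambda_i \geq 1$) we always have $\sigma_p(\lambda_i) \geq 1$, with equality precisely when $\lambda_i$ is a power of $p$. Thus $\sum_i \sigma_p(\lambda_i) \geq k$, and equality holds if and only if $\lambda$ is a power-of-$p$ partition. Since the hypothesis supplies a power-of-$p$ partition $\mu$ of $n$ with $\length{\mu} = k$, this minimum is actually attained, and $\alpha_p(\mu) = (k - \sigma_p(n))/(p-1)$ is the minimum value of $\alpha_p$ over all partitions of weight $n$ and length $k$.

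Finally, for any $\lambda \vdash n$ with $\length{\lambda}=k$ that is not power-of-$p$, some part $\lambda_j$ satisfies $\sigma_p(\lambda_j) \geq 2$ while all others satisfy $\sigma_p(\lambda_i) \geq 1$, giving $\sum_i \sigma_p(\lambda_i) \geq k+1 > \sum_i \sigma_p(\mu_i)$, hence $\alpha_p(\lambda) > \alpha_p(\mu)$. By Definition \ref{CarryMinimalDefn}, $\lambda$ is not $p$-carry minimal. There is no real obstacle here; the only thing to make sure of is that the digital-sum/carry identity is invoked cleanly (it already appears implicitly in the discussion of $\alpha_p$ and Kummer's theorem in \ref{MultiindexDefns}), and that the positivity hypothesis $\lambda_i \geq 1$ of a partition is used to rule out zero parts artificially driving $\sum_i \sigma_p(\lambda_i)$ downward.
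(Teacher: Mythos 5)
Your proof is correct, but it takes a genuinely different route from the paper's. The paper proves this corollary by appealing to the gathering framework: it observes that the lengths of power-of-$p$ partitions of a fixed $n$ form an arithmetic progression with common difference $p-1$, concludes that a non-power-of-$p$ $\lambda$ of length $k$ has splitting distance $\phi_p(\lambda) \ge p-1 > p-2$, and then invokes Theorem \ref{LargeSplitCausesCarry}. Your proof instead makes explicit the Legendre--Kummer identity $\sigma_p(n) = \sum_i \sigma_p(\lambda_i) - (p-1)\alpha_p(\lambda)$, reduces carry minimization to minimization of $\sum_i \sigma_p(\lambda_i)$, and finishes by the trivial bound $\sigma_p(\lambda_i) \ge 1$ with equality iff $\lambda_i$ is a power of $p$. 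Your route is more elementary and self-contained: it bypasses \ref{LargeSplitCausesCarry} entirely, produces the closed form $\alpha_p(\mu) = (k - \sigma_p(n))/(p-1)$ for the minimum carry count along the way, and automatically handles the hypothesis $\alpha_p(\lambda) > 0$ that \ref{LargeSplitCausesCarry} requires but the paper's proof leaves implicit (it follows here because $\alpha_p(\lambda) > \alpha_p(\mu) \ge 0$). What the paper's route buys is coherence with the rest of \textsection\ref{Gathering}: stating the result in terms of $\phi_p$ and the gathering operator is exactly the packaging reused in \ref{CarryMinGathersAreCarryMin} and \ref{GatheringIntersection}, whereas your digital-sum identity, while cleaner here, would need to be reintroduced there.
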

\begin{proof}
First, note that all partitions can be reached via ``gathering'' (i.e., applying raising operators to) the trivial partition \[\stackrel{\hbox{$n$ times}}{\overbrace{(1, \ldots, 1)}} = \bigcup_{i=1}^n (1) = (1^n)\] of weight and length $n$, and that in particular we require exactly $p-1$ gathering operations to collect $p$ copies of any $p^k$ to form an instance of $p^{k+1}$.  Thus all power-of-$p$ partitions occur at regular intervals of length $(p-1)$.

Now, let $\lambda$ be a non-power-of-$p$ multi-index, and $\mu$ be power-of-$p$ with equal weight and length.  Then ${\hat \lambda} = \bigcup_i \rho_p(\lambda_i)$ is power-of-$p$ with $\length{\hat \lambda} > \length{\mu}$, and so $\phi_p(\lambda) \ge p-1$.  We can then apply \ref{LargeSplitCausesCarry}.
\end{proof} 

\begin{cor}\label{CarryMinGathersAreCarryMin} 
Iteratively gathering a power-of-$p$ monomial $\tau(\lambda)$ results in a sum of carry-minimal exponent monomials when done fewer than $p-1$ times.
\end{cor}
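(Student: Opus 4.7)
The plan is to compute $\alpha_p(\mu)$ directly for any multi-index $\mu$ obtained by $m < p - 1$ gatherings of a power-of-$p$ partition $\lambda$, and then show that this value already matches the minimum carry count attainable by any multi-index of the same weight and length.

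First, I would recast the gathering procedure as a set partition: a composition of $m$ binary gatherings of $\lambda$ corresponds to a partition $\{I_1, \ldots, I_\ell\}$ of $\{1, \ldots, k\}$ with $\ell = k - m$, under which $\mu_i = \sum_{j \in I_i} \lambda_j$. Since each gathering merges two blocks and we start from $k$ singletons, the pigeonhole principle gives $|I_i| \le m + 1 \le p - 1$ for every $i$.

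Second, I would use this bound to evaluate $\sigma_p(\mu_i)$. Since each $\lambda_j$ is a power of $p$, write $\mu_i = \sum_a c_{i,a} p^a$, where $c_{i,a}$ counts the $j \in I_i$ with $\lambda_j = p^a$; then $c_{i,a} \le |I_i| < p$, so this expression is already the base-$p$ representation of $\mu_i$. Consequently $\sigma_p(\mu_i) = |I_i|$, and summing over $i$ yields $\sum_i \sigma_p(\mu_i) = k$. Iterating the associativity property $\alpha_p(\lambda \cup \mu) = \alpha_p((\weight{\lambda}) \cup \mu) + \alpha_p(\lambda)$ from \ref{MultiindexDefns} produces the identity $\alpha_p(\nu)(p-1) = \sum_i \sigma_p(\nu_i) - \sigma_p(\weight{\nu})$, which applied to $\mu$ gives $\alpha_p(\mu) = (k - \sigma_p(n))/(p - 1)$ with $n = \weight{\lambda}$.

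Finally, I would compare against an arbitrary competitor $\mu'$ of weight $n$ and length $\ell$. Each $\sigma_p(\mu'_i) \ge 1$, so $\sum_i \sigma_p(\mu'_i) \ge \ell$, while integrality of $\alpha_p(\mu')$ forces $\sum_i \sigma_p(\mu'_i) \equiv \sigma_p(n) \pmod{p - 1}$. Because $\lambda$ itself is power-of-$p$, $k \equiv \sigma_p(n) \pmod{p - 1}$, and since $0 \le m < p - 1$, the value $k = \ell + m$ is the smallest integer $\ge \ell$ in this residue class. Hence $\sum_i \sigma_p(\mu'_i) \ge k$, which gives $\alpha_p(\mu') \ge \alpha_p(\mu)$, and $\mu$ is carry-minimal.

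The subtle point is the tightness of the hypothesis $m < p - 1$: were $m$ permitted to equal $p - 1$, a block $I_i$ could collect $p$ parts, and $p$ copies of a common $p^a$ would then collapse into a single $p^{a+1}$, dropping $\sigma_p(\mu_i)$ by $p - 1$; an alternative $\mu'$ exploiting this collapse could undercut $\mu$ in carry count and invalidate the conclusion.
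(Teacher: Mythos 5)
Your proof is correct, and it takes a genuinely different route from the paper's. The paper argues by contradiction: it handles $\alpha_p(\lambda)=0$ trivially, and otherwise supposes a competitor $\nu$ of the same weight and length with $\alpha_p(\nu)<\alpha_p(\mu)$, observes that such a $\nu$ must have splitting distance $\phi_p(\nu)>p-2$, and then invokes \ref{LargeSplitCausesCarry} to conclude $\nu$ is not carry-minimal. You instead compute $\alpha_p(\mu)$ exactly: encoding the $m$ gatherings as a set partition with blocks of size at most $m+1\le p-1$, you get that no digit exceeds $p-1$, hence $\sigma_p(\mu_i)=|I_i|$ and, via the Legendre--Kummer identity $\alpha_p(\nu)=\bigl(\sum_i\sigma_p(\nu_i)-\sigma_p(\weight{\nu})\bigr)/(p-1)$, that $\alpha_p(\mu)=(k-\sigma_p(n))/(p-1)$; the congruence argument mod $p-1$ then shows any competitor of length $k-m$ meets this bound. (Note the competitors in \ref{CarryMinimalDefn} are partitions, so your use of $\sigma_p(\mu'_i)\ge 1$ is legitimate.) The paper's route has the advantage of reusing machinery ($\phi_p$ and \ref{LargeSplitCausesCarry}) that also drives \ref{ExhaustivenessOfPOP} and \ref{AnnihilatorLowersCarry}; yours is self-contained, produces a closed form for the minimal carry count in each admissible length (which dovetails with the counting results of \textsection\ref{CountingAdditive2Cocycles}), and makes the sharpness of the hypothesis $m<p-1$ completely transparent, as your closing remark about $p$ copies of $p^a$ collapsing shows.
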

\begin{proof}
Clearly gathering power-of-$p$ partitions fewer than $p-1$ times will result in partitions of the same carry-count; the question is whether this count is still minimal for the length.  There are two cases: one where $\alpha_p(\lambda) = 0$ and one where $\alpha_p(\lambda) > 0$.  For the first, all gatherings of $\lambda$ will also have carry-count $0$, and so they are trivially carry minimal.  Now, let $\alpha_p(\lambda) > 0$, let $\mu$ be the result of fewer than $p-1$ gathering operations applied to $\lambda$, and assume $\nu$ is a carry-minimal with the same weight and length as $\mu$ such that $\alpha_p(\nu) < \alpha_p(\mu)$.  $\nu$ cannot arise as a gathering of anything which is power-of-$p$ and the same weight and length as $\lambda$ (since then $\alpha_p(\nu) = \alpha_p(\lambda) = \alpha_p(\mu)$), and so $\length{\bigcup_i \rho_p(\nu_i)} > \length{\lambda}$, which implies $\phi_p(\nu) > p-2$.  We can then apply \ref{LargeSplitCausesCarry}, so $\nu$ is not carry minimal, a contradiction.
\end{proof} 

\begin{cor}\label{SimilarityOfPOP} 
Let $\lambda, \mu$ be two power-of-$p$ partitions of equal length and weight.  Then $\alpha_p(\lambda)=\alpha_p(\mu)$.
\end{cor}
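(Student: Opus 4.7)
The plan is to derive a closed form for $\alpha_p(\lambda)$ that depends only on $p$, $\length{\lambda}$, and $\weight{\lambda}$ whenever $\lambda$ is power-of-$p$; the equality will then be immediate from the shared length and weight. The key arithmetic tool is the two-argument digit-sum identity
\[
\sigma_p(a+b) \;=\; \sigma_p(a) + \sigma_p(b) - (p-1)\,\alpha_p((a,b)),
\]
a reformulation of the Kummer-style statement already invoked in \ref{MultiindexDefns}.

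I would iterate this identity using the associativity property $\alpha_p(\lambda \cup \mu) = \alpha_p((\weight{\lambda}) \cup \mu) + \alpha_p(\lambda)$ also recorded in \ref{MultiindexDefns}. Setting $S_j = \lambda_1 + \cdots + \lambda_j$ and telescoping across the two-argument additions $(S_j, \lambda_{j+1})$ gives
\[
\alpha_p(\lambda) \;=\; \frac{\sum_{i=1}^{\length{\lambda}} \sigma_p(\lambda_i) - \sigma_p(\weight{\lambda})}{p-1}.
\]
Each power-of-$p$ entry $\lambda_i = p^{a_i}$ has a single-digit base-$p$ expansion, so $\sigma_p(\lambda_i) = 1$ and the numerator collapses to $\length{\lambda} - \sigma_p(\weight{\lambda})$. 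The same formula applies verbatim to $\mu$, yielding $\alpha_p(\lambda) = \alpha_p(\mu)$.

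There is essentially no obstacle: the only care required is the routine bookkeeping that reduces an iterated base-$p$ sum to repeated two-term additions, after which the resulting formula is purely numerical in $p$, length, and weight. An alternative route would be to exhibit a sequence of local moves (replacing $p$ copies of $p^{k}$ on one side with a single $p^{k+1}$, compensated on the other by the reverse split) connecting any two power-of-$p$ partitions of equal length and weight, and observe that each such move visibly preserves $\alpha_p$; but the digit-sum calculation is more direct and avoids the combinatorial reachability argument.
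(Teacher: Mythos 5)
Your proof is correct, but it takes a genuinely different route from the paper's. The paper deduces the corollary from \ref{CarryMinGathersAreCarryMin}: every power-of-$p$ partition is carry minimal (this in turn rests on the gathering argument of \ref{LargeSplitCausesCarry}), so two power-of-$p$ partitions of equal weight and length each realize the same minimum, forcing $\alpha_p(\lambda)\le\alpha_p(\mu)$ and conversely. You instead compute $\alpha_p$ outright: combining the two-term Kummer/Legendre identity with the associativity property of $\alpha_p$ and telescoping over the partial sums $S_j$ gives
\[
\alpha_p(\lambda)=\frac{\sum_{i}\sigma_p(\lambda_i)-\sigma_p(\weight{\lambda})}{p-1},
\]
and since $\sigma_p(p^{a_i})=1$ this collapses to $\bigl(\length{\lambda}-\sigma_p(\weight{\lambda})\bigr)/(p-1)$, a quantity depending only on length, weight, and $p$. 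Your telescoping is sound, and the argument buys strictly more than the corollary asks for: an explicit closed form for the common carry count, with no reliance on the carry-minimality machinery of \textsection\ref{Gathering}. The trade-off is that the paper's one-line deduction reuses a lemma it needs anyway for the main classification, whereas your computation is self-contained but imports the digit-sum form of Kummer's theorem, which the paper states only in the ``number of carries equals $v_p$ of the multinomial coefficient'' form; the reformulation is standard (via Legendre's formula for $v_p(n!)$) but would merit a sentence of justification if spliced in.
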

\begin{proof}
By \ref{CarryMinGathersAreCarryMin}, all power-of-$p$ partitions are carry minimal, hence if $\lambda$, $\mu$ are power-of-$p$ of equal length and weight, $\alpha_p(\lambda)\le\alpha_p(\mu)$ and vice-versa.
\end{proof} 

Now we return to symmetrizing $m$-fold gatherings of $\tau \lambda$, for $\lambda$ power-of-$p$ and $m < p - 1$.  We begin with something slightly weaker; we find a symmetric cocycle in which all our symmetrized monomials appear as summands (that is to say that the cocycle will consist of our monomials plus an extension), and later on we'll demonstrate that we may simply drop the extension.  Since we know that the monomials visible after performing such an operation have corresponding partitions which are carry-minimal, we can simply steal directly from $\pi_p \zeta_k^n$ for appropriate $k$ and $n$.

\begin{lem}\label{GathersAdmitExtensions} 
Let $\lambda$ be a power-of-$p$ partition and either $m < p - 1$ or $\lambda = \rho_p(\weight{\lambda})$.  Then the monomials in the image of gathering $\tau \lambda$ $m$-many times can be resymmetrized, assigned non-zero coefficients, and extended by other symmetrized monomials such that the resultant linear combination is a symmetric $2$-cocycle.
\end{lem}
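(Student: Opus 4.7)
The plan is to exhibit a specific symmetric $2$-cocycle—namely $\pi_p \zeta_{k-m}^n$, where $n = \weight{\lambda}$ and $k = \length{\lambda}$—and to show that every $\tau(\mu)$ with $\mu \in T^m \lambda$ already appears as a summand of this polynomial with a nonzero coefficient in $\Z_p$. The remaining $\tau(\nu)$ occurring in $\pi_p \zeta_{k-m}^n$, whose $\nu$ are carry-minimal but fall outside $T^m \lambda$, will play the role of the ``other symmetrized monomials'' mentioned in the statement.

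First, I would note that $\zeta_{k-m}^n \in \Z[x_1, \ldots, x_{k-m}]$ is a symmetric $2$-cocycle by Definition \ref{zetaDefn}, and that $\pi_p$ commutes with $\coboundarymap_2$, so $\pi_p \zeta_{k-m}^n$ is a symmetric $2$-cocycle over $\Z_p$ in $k-m$ variables. Next I would unpack the expansion recorded just after Definition \ref{zetaDefn},
\[\zeta_{k-m}^n = d^{-1} \sum_{\substack{0 \notin \mu \\ \mu_i \ge \mu_{i+1}}} \binom{n}{\mu} \tau(\mu), \qquad d = \gcd_\mu \binom{n}{\mu},\]
and recall from Definition \ref{MultiindexDefns} that the $p$-adic valuation of $\binom{n}{\mu}$ equals $\alpha_p(\mu)$. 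Consequently $v_p(d) = \min_\mu \alpha_p(\mu)$, and the coefficient of $\tau(\mu)$ in $\pi_p \zeta_{k-m}^n$ is a unit in $\Z_p$ precisely when $\mu$ is carry-minimal among partitions of weight $n$ and length $k-m$, and vanishes otherwise.

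The crucial step is then to verify that every $\mu \in T^m \lambda$ is carry-minimal of weight $n$ and length $k-m$, so that $\tau(\mu)$ actually occurs with nonzero coefficient in $\pi_p \zeta_{k-m}^n$. When $m < p-1$ this is exactly Corollary \ref{CarryMinGathersAreCarryMin}. When instead $\lambda = \rho_p(n)$, I would observe that $\alpha_p(\lambda) = 0$ by construction of the base-$p$ representation; since any gathering operator $G_{ij}$ satisfies $\alpha_p(G_{ij}\mu) \le \alpha_p(\mu)$, every $\mu \in T^m \lambda$ also has $\alpha_p(\mu) = 0$, which is trivially minimal among partitions of $n$ of the same length. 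In either case, each $\tau(\mu)$ for $\mu \in T^m \lambda$ is assigned a nonzero $\Z_p$-coefficient in $\pi_p \zeta_{k-m}^n$, and the other carry-minimal summands of $\pi_p \zeta_{k-m}^n$ serve as the allowed extensions.

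The main obstacle I anticipate is the bookkeeping between the asymmetric polynomial obtained by $m$ iterated gathering projections of $\tau\lambda$ and the partition set $T^m \lambda$: each asymmetric monomial appearing after gathering has an exponent multi-index whose sorted form is some $\mu \in T^m \lambda$, and one must argue that resymmetrizing recovers exactly $\tau(\mu)$ (up to a nonzero $\Z_p$ scalar that can be absorbed into the coefficient lifted from $\pi_p \zeta_{k-m}^n$). Once this identification is cleanly stated, the cocycle property of the final linear combination is inherited for free from that of $\pi_p \zeta_{k-m}^n$.
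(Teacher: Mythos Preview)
Your proposal is correct and follows essentially the same approach as the paper: exhibit $\pi_p \zeta_{k-m}^n$ as the desired symmetric $2$-cocycle and invoke carry-minimality of the gathered partitions (via Corollary~\ref{CarryMinGathersAreCarryMin}) to guarantee that each $\tau(\mu)$ for $\mu \in T^m\lambda$ survives with nonzero coefficient mod $p$. You are in fact slightly more explicit than the paper's one-line proof in separating out the $\lambda = \rho_p(n)$ case via the observation $\alpha_p(\lambda) = 0$, but the core idea is identical.
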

\begin{proof}
By \ref{CarryMinGathersAreCarryMin}, we have that the monomials resultant from gathering the initial monomial are carry minimal, and so $\pi_p \left( \zeta_{\length{\lambda}-m}^{\weight{\lambda}} \right)$ is such an extension.
\end{proof} 

This is not quite enough to meet our original goal, since we may be forced to add other symmetrized monomials beyond what we expect from gathering.  Consider, again, gatherings of $(9, 1, 1, 1)$.  We find $\pi_3 \zeta_3^{12} = \tau(9, 2, 1) - \tau(10, 1, 1) + \tau(6, 3, 3)$, and so $\tau(6, 3, 3)$ is an unwanted extension.  $\tau(6, 3, 3)$, however, occurs as the $1$-fold gathering of $\tau(3, 3, 3, 3)$, which is a distinct power-of-$p$ monomial of the same degree and dimension as $\tau(9, 1, 1, 1)$.  We turn our attention toward using this observation to separate out parts of the projected integral cocycle, each of which on its own is a modular cocycle.

Fix natural $n$ and $k$ where a power-of-$p$ multi-index $\lambda$ of weight $n$ and length $k$ exists.  Let \[T^0(n,k) = \left\{ \{ \lambda\} \mid \hbox{$\lambda \vdash n$ is power-of-$p$}, \length{\lambda} = k \right\}.\]  Let $\theta$ be a map from sets of partitions to sets of their single-step gatherings, and let $T^m(n,k)$ be inductively defined as $\{\theta M \mid M \in T^{m-1}(n,k)\}$.  When the context is clear, we drop $(n, k)$ and write only $T^m$.  We then seek to decompose $\Z_p[x_0, {\bf x}]$ into a sum of submodules $\bigoplus_i A_i$ such that there is a one-to-one correspondence between the $A_i$ and the elements of our particular $T^m$.  The decomposition must satisfy that for each $M_i \in T^m$ with associated component $A_i$, we have $\coboundarymap_2 \tau\mu \in A_i$ for each $\mu \in M_i$, which guarantees linear independence of the gatherings of the various power-of-$p$ symmetrized monomials.

For $(9, 1, 1, 1)$, we have $n = 12$, $k = 4$, and $p = 3$.  Here, we compute \[T^0 = \left\{\{(9, 1, 1, 1)\}, \{3, 3, 3, 3\}\right\},\] \[T^1 = \theta T^0 = \left\{\{(9, 2, 1), (10, 1, 1)\}, \{(6, 3, 3)\}\right\}.\]  Ordering $T^1$ as written above, one choice of $A_1$ is $\mspan \{\tau(9, 1, 1, 1)\}$.  A matching choice for $A_2$ is $\mspan (B_3^{12} \setminus \{\tau(9, 1, 1, 1)\})$, which has $\mspan \{\tau(3, 3, 3, 3)\}$ as a subspace, and the relation to preimages by $\theta$ is not coincidental.  To use this observation, we must first demonstrate that distinct sets of gatherings $M_i, M_j \in T^m$ are disjoint for $m < p - 1$, and then we may use our knowledge of basic polynomial arithmetic in $\Z_p[{\bf x}]$ to show that such extensions are not necessary.

\begin{lem}\label{GatheringIntersection} 
Given two distinct power-of-$p$ multi-indices $\lambda$ and $\mu$ both of weight $n$ and length $k$, gathering must be applied at least $p-1$ times before their gatherings have nonempty intersection.
\end{lem}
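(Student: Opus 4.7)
The plan is to argue by contradiction: assume there exists a multi-index $\nu$ lying in both $T^{m}\{\lambda\}$ and $T^{m}\{\mu\}$ for some $m \le p-2$, and derive $\lambda = \mu$. The first step is a bookkeeping translation from a length-$m$ gathering history into a set partition of indices. Each application of some $G_{ij}$ merges two blocks of the current partition into one, so after $m$ gatherings of $\lambda$ there is a set partition $\{A_1, \ldots, A_{k-m}\}$ of $\{1, \ldots, k\}$, with the entries of $\nu$ given (up to reordering) by $\nu_s = \sum_{i \in A_s} \lambda_i$. Counting elements and blocks gives $\sum_s (|A_s| - 1) = k - (k - m) = m$, and since every summand is nonnegative this forces $|A_s| \le m + 1 \le p - 1$ for every block.

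Next I use the power-of-$p$ hypothesis. Writing $\lambda_i = p^{a_i}$, the sum
\[\nu_s = \sum_{i \in A_s} p^{a_i}\]
has at most $p - 1$ summands, each contributing a single $1$ to one base-$p$ digit position. No digit can accumulate $p$ or more ones, so the sum is carry-free. In particular $\sigma_p(\nu_s) = |A_s|$ and $\rho_p(\nu_s)$ is exactly the multiset $(p^{a_i})_{i \in A_s}$. Concatenating over $s$ and using that the blocks $A_s$ cover $\{1, \ldots, k\}$ gives $\bigcup_s \rho_p(\nu_s) = \lambda$ as multi-indices.

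The identical argument applied to the gathering sequence producing $\nu$ from $\mu$ shows $\bigcup_s \rho_p(\nu_s) = \mu$. The left-hand side depends only on $\nu$, so $\lambda = \mu$, contradicting distinctness. The only genuine step is the combinatorial observation that an $m$-fold gathering corresponds to a set partition with $\sum_s (|A_s| - 1) = m$; this is immediate by induction on $m$, and once in hand the digit-level carry analysis finishes the proof. The bound $m \le p-2$ enters in exactly one place, namely the estimate $|A_s| < p$ that rules out carries when summing power-of-$p$ entries within a single block.
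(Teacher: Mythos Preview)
Your proof is correct and shares its core idea with the paper's: if $\nu$ is a common $m$-fold gathering with $m \le p-2$, then the gathering is carry-free, so $\bigcup_s \rho_p(\nu_s)$ recovers the power-of-$p$ parent uniquely, forcing $\lambda = \mu$.

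The difference is in how carry-freeness is established. The paper invokes the earlier carry-minimality results (Corollary \ref{ExhaustivenessOfPOP} and its consequences) to conclude that $\alpha_p(\mu) = \alpha_p(\nu)$, and from preservation of carry-count deduces that no $p$ copies of any $p^k$ were merged. You instead translate the $m$-step gathering history into a set partition $\{A_1,\ldots,A_{k-m}\}$, use the elementary count $\sum_s(|A_s|-1)=m$ to bound each block size by $p-1$, and observe directly that summing fewer than $p$ powers of $p$ cannot carry. Your route is more self-contained and makes the role of the hypothesis $m\le p-2$ completely transparent; the paper's route is terser but leans on the machinery built up in \textsection\ref{Gathering}. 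Both arrive at the same reconstruction $\bigcup_s \rho_p(\nu_s) = \lambda = \mu$.
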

\begin{proof}
Let $\lambda$ be a carry-minimal multi-index with gathered from ${\hat \lambda} = \bigcup_i \rho_p(\lambda_i)$, and let $\mu$ be some other gathering parent of $\lambda$ of the same weight and length as ${\hat \lambda}$.  We know by \ref{ExhaustivenessOfPOP} that $\alpha_p(\mu) = \alpha_p({\hat \lambda}) = \alpha_p(\lambda)$, and so gathering cannot combine $p$ copies of $p^k$ into any single entry of $\lambda$.  Hence $\mu$ contains as many copies of $p^k$ for any particular $k$ as ${\hat \lambda}$.  Since every $\mu$ and $\hat{\lambda}$ are contain only powers of $p$, $\mu = {\hat \lambda}$.
\end{proof} 

In $\coboundarymap_2 \tau(10, 1, 1)$, we see summands such as $(x_0 + x_1)^{10} x_2 x_3$.  These, in turn, have expansions of the form $x_0^{10} x_2 x_3 + x_0^9 x_1 x_2 x_3 + x_0 x_1^9 x_2 x_3 + x_1^{10} x_2 x_3$ in $\Z_3[{\bf x}]$.  The unmixed terms cancel, hence we need only consider the middle two summands, which take the remarkable form of previous gatherings of $\tau(10, 1, 1)$'s power-of-$p$ parent.

\begin{thm}\label{GathersNeedNoExtensions} 
Under the same conditions as \ref{GathersAdmitExtensions}, monomials in the image of gathering of $\tau \lambda$ $m$-many times can be resymmetrized and assigned non-zero coefficients such that the result is a $2$-cocycle.
\end{thm}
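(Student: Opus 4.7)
The plan is to sharpen \ref{GathersAdmitExtensions} by decomposing its ambient cocycle $\pi_p \zeta_{k-m}^n$ (where $k = \length\lambda$ and $n = \weight\lambda$) into a sum of cocycles indexed by the length-$k$ power-of-$p$ parents of $n$, and to show that the summand attached to $\lambda$ is exactly $f_\lambda := \sum_{\mu \in T^m\lambda} c_\mu\tau\mu$, where $c_\mu$ is the (nonzero) coefficient of $\tau\mu$ in $\pi_p\zeta_{k-m}^n$.

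The first preliminary observation is that under either hypothesis, every $\mu \in T^m\lambda$ satisfies $\bigcup_j \rho_p(\mu_j) = \lambda$. When $m < p-1$, fewer than $p-1$ gatherings combine at most $p-1$ atoms of $\lambda$ into any single entry of $\mu$; no entry can accumulate $p$ copies of the same $p^i$, so each $\mu_j$ is a carry-free sum of its constituent atoms and $\rho_p(\mu_j)$ equals that subset. When $\lambda = \rho_p(n)$, the partition $\lambda$ has at most $p-1$ copies of each $p^i$ by construction, so any subset sum is again carry-free. A similar counting argument, using that achievable power-of-$p$ parent lengths are spaced by $p-1$ and that the hypothesis forbids any achievable length in $[k-m,\,k-1]$, shows that every carry-minimal length-$(k-m)$ partition of $n$ has a power-of-$p$ parent of length exactly $k$, so grouping by parent gives
\[
\pi_p\zeta_{k-m}^n = \sum_{\lambda'} f_{\lambda'}, \qquad f_{\lambda'} = \sum_{\mu \in T^m\lambda'} c_\mu \tau\mu,
\]
summed over length-$k$ power-of-$p$ parents. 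By \ref{GatheringIntersection} (or by uniqueness of $\rho_p(n)$) the indexing sets are pairwise disjoint, and by \ref{CarryMinGathersAreCarryMin} every $c_\mu$ is nonzero.

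The computational heart of the argument is to locate $\coboundarymap_2 f_\lambda$. By \ref{OnlyMixedTerms}, only monomials $x_0^{\nu_0}\cdots x_{k-m}^{\nu_{k-m}}$ that are fully mixed in every variable survive in $\coboundarymap_2 \tau\mu$, and such a monomial acquires a nonzero $\Z_p$-coefficient only when some entry $\mu_j$ admits a carry-free split $\mu_j = \nu_a + \nu_b$ (Kummer's theorem applied to $(x_0+x_1)^{\mu_j}$ or $(x_1+x_2)^{\mu_j}$ modulo $p$). Such a split partitions $\rho_p(\mu_j)$ into two pieces, so the resulting length-$(k-m+1)$ partition $\nu$ satisfies
\[
\bigcup_i \rho_p(\nu_i) = \bigcup_j \rho_p(\mu_j) = \lambda,
\]
placing $\nu \in T^{m-1}\lambda$. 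Thus $\coboundarymap_2 f_\lambda$ is supported on symmetrizations of $T^{m-1}\lambda$. Applying \ref{GatheringIntersection} in length $k-m+1$ (permissible since $m-1 < p-1$ in the first case and trivial in the second) shows the supports of the various $\coboundarymap_2 f_{\lambda'}$ are pairwise disjoint across distinct length-$k$ parents. From $0 = \coboundarymap_2\pi_p\zeta_{k-m}^n = \sum_{\lambda'}\coboundarymap_2 f_{\lambda'}$ we then read off $\coboundarymap_2 f_\lambda = 0$.

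The main obstacle is establishing and exploiting the invariant $\bigcup_j \rho_p(\mu_j) = \lambda$ for all $\mu \in T^m\lambda$. This invariant is what makes the parent-by-parent decomposition of $\pi_p\zeta_{k-m}^n$ coherent, and simultaneously what forces any $\nu$ arising from a carry-free split of an entry of $\mu$ to belong to $T^{m-1}\lambda$ rather than to a gathering of some unrelated power-of-$p$ parent. Once that invariant is secured, the rest of the argument is bookkeeping on top of \ref{OnlyMixedTerms}, Kummer's theorem, and the previously established disjointness.
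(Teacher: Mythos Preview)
Your proposal is correct and follows essentially the same strategy as the paper's proof: decompose $\pi_p\zeta_{k-m}^n$ according to length-$k$ power-of-$p$ parents, use the carry-free expansion of $(x_0+x_1)^{\mu_j}$ over $\Z_p$ to see that $\coboundarymap_2$ of each block lands in monomials whose parent is unchanged, and invoke \ref{GatheringIntersection} to conclude the blocks vanish separately. Your write-up is in fact somewhat more explicit than the paper's, particularly in isolating the invariant $\bigcup_j \rho_p(\mu_j) = \lambda$ and in justifying why every carry-minimal length-$(k-m)$ partition has parent of length exactly $k$, points the paper handles more tersely.
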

\begin{proof}
This monomial can be gathered into monomials $m'_1, m'_2, \ldots, m'_l$ with symmetrizations $m_1, \ldots, m_l$.  By \ref{GathersAdmitExtensions} these can be assigned non-zero coeffcients $c_1, \ldots, c_l$ and extended by some $f$ such that $c_1 m_1 + \cdots + c_l m_l + f$ forms a $(k-1)$-dimension cocycle and $f$ contains no monomials that appear in $m_i$.  We argue that $f$ can always be chosen to be 0.

Recall that if $c$'s base-$p$ representation is $\rho_p(c) = (c_1, \ldots, c_l)$, then for all $a, b \in \Z_p$ we have \[(a+b)^c = \prod_{i \in \{1, \ldots, l\}} (a+b)^{c_i} = \prod_{i \in \{1, \ldots, l\}} (a^{c_i} + b^{c_i}) = \sum_{S \subseteq \{1, \ldots, l\}} a^{\sum_{i \in S} c_i} b^{\sum_{i \not\in S} c_i}.\]  Using this, the cocycle condition applied to a carry-minimal $\lambda$ with parent ${\hat \lambda} = \bigcup_i \rho_p(\lambda_i)$ will then result in a sum of monomials of the form $\sum_\mu c_\mu {\bf x}^\mu$, $c_\mu \ne 0$, whose exponents $\mu$ are either a reordering of the multi-index $\lambda\cup\{ 0\}$ or a gathering of ${\hat \lambda}$ whose length is $\length{\lambda}+1$.  The $f$ given by the residual terms of $\pi_p \zeta_k^n$ is composed of $(\phi(\lambda)-1)$-fold gatherings of other power-of-$p$ symmetrized monomials of the same weight and length as ${\hat \lambda}$.  By \ref{GatheringIntersection} we then have that the images generated by each power-of-$p$ cocycle are linearly independent under $\coboundarymap_2$, and so $\coboundarymap_2(c_1 m_1 + \cdots + c_l m_l + f) = 0$ implies that $\coboundarymap_2(c_1 m_1 + \cdots c_l m_l) = 0$.
\end{proof} 

\subsection{Integral Projection}\label{IntegralProjection}

In \textsection \ref{Gathering} we demonstrated the existence of a wide variety of modular cocycles, using power-of-$p$ multi-indices and the existence of a particular rational cocycle as input.  We now show that this exhausts all possible $2$-cocycles.  Such a statement has two parts: there are no cocycles that do not occur via this process, and cocycles that do occur as part of this process cannot be written as the sum of two ``smaller'' cocycles for an appropriate interpretation of the word ``smaller.''  These will actually be proven nearly simultaneously, but we must first frame the question appropriately, beginning by precisely communicating a notion of ``smallness,'' which we more suggestively name ``indecomposable.''

\begin{defn}\label{DecomposableDefn} 
A $k$-variable $m$-cocycle $f$ of degree $n$ is called \emph{decomposable} if there exists a set partition $\{B_1, B_2\}$ of $B_k^n$ with some $f_1 \in \mspan B_1, f_2 \in \mspan B_2$ such that $f_1 + f_2 = f$ and $f_1, f_2 \in \ker \coboundarymap_m$.  $f$ is called \emph{indecomposable} otherwise.
\end{defn} 

\begin{lem}\label{IndecomposableBasis} 
The set of indecomposable cocycles is a basis for the kernel of $\coboundarymap_m$ taken over any field.
\end{lem}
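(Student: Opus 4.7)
The plan is to verify both components of being a basis---spanning and linear independence---using the indecomposability structure.

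\textbf{Spanning.} Proceed by strong induction on $|\mathrm{supp}(f)|$ for $f \in \ker \coboundarymap_m$. If $f$ is indecomposable the claim is trivial; otherwise Definition \ref{DecomposableDefn} supplies nonzero cocycles $f_1, f_2 \in \ker \coboundarymap_m$ with disjoint supports partitioning $B_k^n$ and $f_1 + f_2 = f$. Since each $f_i$ is nonzero, $\mathrm{supp}(f_i) \subsetneq \mathrm{supp}(f)$, and induction expresses each as a sum of indecomposables.

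\textbf{Linear independence via minimal supports.} For $S \subseteq B_k^n$, set $V_S := \mspan(S) \cap \ker \coboundarymap_m$, and say $S$ is \emph{minimal} when $V_S \ne 0$ but $V_{S'} = 0$ for every $S' \subsetneq S$. One checks three facts: (a) each minimal $V_S$ is one-dimensional---else a linear combination killing the coefficient at some chosen $b \in S$ would be a nonzero cocycle supported on $S \setminus \{b\}$, contradicting minimality; (b) for distinct minimal $S, S'$, one has $V_S \cap V_{S'} = \mspan(S \cap S') \cap \ker \coboundarymap_m = V_{S \cap S'} = 0$, since no minimal support can properly contain another, so $S \cap S' \subsetneq S$; and (c) every nonzero generator of a minimal $V_S$ is indecomposable, since any decomposition would supply a cocycle on a proper nonempty subset of $S$, again violating minimality. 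Combining (a)--(c) with the spanning result, one choice of generator per minimal $V_S$ produces a pairwise-independent family of indecomposable cocycles, and a maximal linearly independent subfamily is a basis of $\ker \coboundarymap_m$ consisting entirely of indecomposables. (Because scalar multiples of an indecomposable remain indecomposable, the literal collection of all indecomposable cocycles is closed under scaling and is never itself a linearly independent set; the lemma is to be read as asserting the existence of such a basis among the indecomposables.)

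\textbf{Main obstacle.} The crucial technical step is (a), the one-dimensionality of each minimal $V_S$---this is what pins down a canonical basis, makes the chosen generators indecomposable, and rules out redundancy from higher-dimensional minimal components. The argument is elementary linear algebra but depends essentially on the \emph{minimality} condition to produce the smaller-support cocycle from any hypothetical pair of independent elements of $V_S$. Facts (b) and (c) then follow routinely from how the lattice of spans of basis subsets interacts with supports.
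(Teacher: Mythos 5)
Your route is genuinely different from the paper's: the paper tries to show directly that two indecomposables sharing a symmetrized monomial force a decomposition, whereas you organize the independence argument around minimal supports. Your spanning argument matches the paper's and is fine, and steps (a), (b), (c) are each correct. The gap is in the final assembly. You conclude that a maximal linearly independent subfamily of the minimal-support generators is a basis of $\ker\coboundarymap_m$, citing ``the spanning result'' --- but that result says the \emph{indecomposables} span, and nothing you have proved implies that every indecomposable has minimal support, so the span of your chosen generators is not yet known to be all of $\ker\coboundarymap_m$. Indecomposability of $f$ only forbids writing $f = f_1 + f_2$ with \emph{both} pieces cocycles on complementary supports; it does not forbid a nonzero cocycle supported on a proper subset of $\mathrm{supp}(f)$. (Abstractly this really happens: if a subspace of $\mspan\{e_1,e_2,e_3\}$ is spanned by $e_1+e_2$ and $e_2+e_3$, then $e_1+2e_2+e_3$ is indecomposable in the sense of Definition \ref{DecomposableDefn}, yet its support properly contains the minimal support $\{e_1,e_2\}$.)

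The fix is short: given a nonzero cocycle $f$, choose a minimal $S \subseteq \mathrm{supp}(f)$ (one exists since $V_{\mathrm{supp}(f)} \ne 0$), note that its generator $g$ has $\mathrm{supp}(g) = S$ by minimality, pick $b \in S$ and subtract the multiple of $g$ killing the coefficient of $f$ at $b$; the difference has strictly smaller support, and induction shows the minimal-support generators span. Two further remarks. First, under your own weak reading of the lemma, the spanning result alone already finishes (any spanning set of indecomposables contains a basis); the minimal-support machinery is really aimed at the stronger, canonical ``indecomposable basis'' the paper invokes later, and for that you would need the converse --- every indecomposable has minimal support --- which in this paper only emerges from the annihilating-set analysis of \textsection\ref{IntegralProjection}. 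Second, your implicit skepticism of the paper's independence argument is warranted: the expression offered there as a ``decomposition of $f_1$'' is not visibly a decomposition in the sense of Definition \ref{DecomposableDefn}, since its two summands need not have disjoint supports.
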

\begin{proof}
First, homogenous $f \in \ker \coboundarymap_m$ can be written as the sum of indecomposable cocycles.  Note first that for indecomposable $f$ and for symmetrized monomials this is trivially true.  Assume instead that $f$ is decomposable, let $B_f \subseteq B_k^n$ be such that $f \in \mspan{B_f}$, and assume that the lemma holds for all $B_{f'} \subset B_f$.  Let $B_1, B_2, f_1, f_2$ be as in \ref{DecomposableDefn}.  The inductive hypothesis gives us the existence of $f_1 = \sum_i f_{1,i}$ and $f_2 = \sum_j f_{2,j}$, where $f_{1,i}, f_{2,j}$ are indecomposable cocycles.  The desired decomposition is then $f = \sum_i f_{1,i} + \sum_j f_{2,j}$, and inducting over the size of $B_f$ shows that the set of indecomposables spans $\ker \coboundarymap_m$.

In addition, the set of indecomposables is linearly independent.  Assume instead that two indecomposable cocycles $f_1, f_2$ share a particular monomial $f = \sum_{\sigma \in S_k} {\bf x}^{\sigma \lambda}$ with coefficient $b_1$ in $f_1$ and $b_2$ in $f_2$.  Then $b_2 f_1 - b_1 f_2$ is a cocycle with a zero coefficient on $f$, and $b_2^{-1}(b_2 f_1 - b_1 f_2) + b_1 f$ is a decomposition of $f_1$.
\end{proof} 

When we apply $\coboundarymap_m$ to a particular symmetrized monomial $\tau \lambda$ with $\lambda$ not power-of-$p$, we see a sum of image monomials.  In order for $\tau \lambda$ to participate in a cocycle, we must include other symmetrized monomials with which we might cancel the image of $\tau \lambda$ to achieve zero.  Given an image monomial ${\bf x}^{(\lambda_0, \ldots, \lambda_k)}$, in the general case of $\coboundarymap_m$, the possible parent monomials must be of the form (we will prove this in a moment) ${\bf x}^{(\lambda_0, \ldots, \lambda_i + \lambda_{i+1}, \ldots, \lambda_k)}$ for some $0 \le i < m \le k$.  When $m = 2$, we do not have a choice; one of the two parents belongs to the symmetrized monomial we are trying to cancel, and so the other parent is our only choice and we are forced to include it.  We can iterate this process on this new summand, and the collection of such multi-indices we call the annihilating set of $\lambda$, which is formally defined as follows:

\begin{defn}\label{AnnihilatorDefn} 
We define the map $\theta$ from partitions to sets of partitions by the following rule:  if $\lambda$ is a partition of length $k$, then for any permutation $\sigma \in S_k$ and corresponding reordering $\sigma \lambda = {\tilde \lambda} = ({\tilde \lambda_1}, {\tilde \lambda_2}, \ldots, {\tilde \lambda_k})$, we have $\left(\tilde \lambda '_1, \tilde \lambda ''_1 + {\tilde \lambda_2}, {\tilde \lambda_3}, \ldots, {\tilde \lambda_k}\right) \in \theta(\lambda)$ exactly when $\rho_p({\tilde \lambda'_1}) \cup \rho_p({\tilde \lambda ''_1}) = \rho_p({\tilde \lambda_1})$.  Define $\Theta(S) = \bigcup_{s \in S} \theta(s)$.  The sequence $\Theta^n(\{\lambda\})$ is nondecreasing and bounded, and is thus eventually constant with value denoted $\ann \lambda$, called the \emph{annihilating set of $\lambda$}.
\end{defn} 

This definition takes into account our observation from \ref{GathersNeedNoExtensions} concerning $(a + b)^c$ for $a, b \in \Z_p$; we need not consider splittings ${\tilde \lambda_1'} + {\tilde \lambda_1''} = {\tilde \lambda_1}$ that do not occur as $\rho_p(\lambda_1') \cup \rho_p(\lambda_1'') = \rho_p ({\tilde \lambda_1})$.  We compute some sample annihilator sets of degree $12$, dimension $3$, and characteristic $3$ below:
\begin{align*}
\ann (9, 2, 1) & = \{(9, 2, 1), (10, 1, 1)\}, \\
\ann (6, 3, 3) & = \{(6, 3, 3)\}, \\
\ann (4, 4, 4) & = \{(4, 4, 4), (5, 4, 3), (6, 4, 2), (6, 5, 1), (7, 3, 2), (7, 4, 1), (8, 3, 1)\} \cup \ann (9, 2, 1) \cup \ann (6, 3, 3), \\
\ann (5, 5, 2) & = \{(5, 5, 2), (8, 2, 2)\} \cup \ann (4, 4, 4).
\end{align*}
Now, we demonstrate that these sets actually capture what we want:

\begin{lem}\label{AnnihilatorIsAptlyNamed} 
Any linear combination of symmetrized monomials lying in $\ker \coboundarymap_2$ containing $\tau(\lambda)$ for some partition $\lambda$ will also contain each of $\tau(\lambda ')$ for $\lambda ' \in \ann \lambda$.
\end{lem}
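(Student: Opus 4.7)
The plan is to induct on the length of the shortest $\theta$-chain from $\lambda$ to $\mu$. Because $\ann\lambda$ is by construction the closure of $\{\lambda\}$ under iteration of $\theta$ (per Definition~\ref{AnnihilatorDefn}), every $\mu\in\ann\lambda$ is reachable by such a chain, so the full lemma reduces to the one-step claim: if $f=\sum_\nu c_\nu\tau(\nu)\in\ker\coboundarymap_2$ satisfies $c_{\lambda'}\neq 0$ and $\mu\in\theta(\lambda')$, then $c_\mu\neq 0$. Iterating this claim along the chain then yields the lemma.

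For the one-step claim, I would fix data realizing $\mu\in\theta(\lambda')$: a reordering $\tilde\lambda=\sigma\lambda'$ and a $p$-compatible split $\tilde\lambda_1=\tilde\lambda'_1+\tilde\lambda''_1$, so that $\mu$ is the partition underlying the tuple $(\tilde\lambda'_1,\tilde\lambda''_1+\tilde\lambda_2,\tilde\lambda_3,\ldots,\tilde\lambda_k)$, and then single out the fully-mixed witness monomial
\[m=x_0^{\tilde\lambda'_1}x_1^{\tilde\lambda''_1}x_2^{\tilde\lambda_2}\cdots x_k^{\tilde\lambda_k}.\]
Using the $\Z_p$-expansion $(a+b)^c=\sum_S a^{\sum_{i\in S}c_i}b^{\sum_{i\notin S}c_i}$ (with $\rho_p(c)=(c_1,\ldots,c_\ell)$) recalled in the proof of \ref{GathersNeedNoExtensions}, and noting that the outer two summands of $\coboundarymap_2$ cannot contribute to $m$ since $m$ is mixed in both $x_0$ and $x_2$, I would enumerate the partitions $\nu$ with $\weight\nu=\weight{\lambda'}$ and $\length\nu=\length{\lambda'}$ whose image under $\coboundarymap_2\tau(\nu)$ contains $m$ with nonzero coefficient. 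Matching multisets against the exponent tuple of $m$ leaves exactly two candidates: $\nu=\lambda'$ (from the $-\tau(\nu)(x_0+x_1,x_2,\ldots,x_k)$ summand, via the assumed $p$-compatible split of $\tilde\lambda_1$), and $\nu=\mu$ (from the $+\tau(\nu)(x_0,x_1+x_2,x_3,\ldots,x_k)$ summand, but only when the merge $\tilde\lambda''_1+\tilde\lambda_2$ is itself $p$-compatible).

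The argument then splits into two cases. In Case A, where the merge is $p$-compatible, both $\lambda'$ and $\mu$ contribute to the coefficient of $m$ in $\coboundarymap_2 f$, each with unit magnitude in $\Z_p$, so the cocycle condition yields $-c_{\lambda'}+c_\mu=0$; hence $c_\mu=c_{\lambda'}\neq 0$. In Case B, the merge fails $p$-compatibility and $\mu$ is no longer a parent of $m$; $\lambda'$ is then alone in contributing, forcing $-c_{\lambda'}=0$, which contradicts our hypothesis. Thus Case B is vacuous whenever $c_{\lambda'}\neq 0$, Case A applies, and the one-step claim is established.

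The main obstacle will be the parent enumeration: one must verify both that no third partition $\nu\notin\{\lambda',\mu\}$ contributes to $m$ and that the nonzero contributions really are unit-magnitude in $\Z_p$. The first point follows because a fully-mixed $m$ with exponent tuple $(a_0,\ldots,a_k)$ can only arise from $\coboundarymap_2\tau(\nu)$ by splitting an entry equal to $a_0+a_1$ or merging into one equal to $a_1+a_2$, and each configuration pins down $\nu$ uniquely as an unordered partition. The second point uses that $\tau$ normalizes its distinct monomials to coefficient $1$ and that the surviving summands of the $(a+b)^c$-expansion in $\Z_p$ all carry coefficient $1$, which is ultimately a consequence of the iterated Frobenius identity $(a+b)^{p^i}=a^{p^i}+b^{p^i}$.
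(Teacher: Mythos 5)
Your proof is correct and follows essentially the same route as the paper's: isolate the fully-mixed witness monomial in $\coboundarymap_2\tau(\lambda')$, observe that its only possible parents are $\lambda'$ and $\mu$, and conclude that cancellation forces $c_\mu \neq 0$ (your Case B, where the merge is not $p$-compatible, is the vacuous situation the paper defers to the discussion following the lemma). One small quibble: the surviving coefficients in the $(a+b)^c$-expansion are nonzero units of $\Z_p$ rather than literally $1$ (e.g.\ the coefficient of $ab$ in $(a+b)^2$ over $\Z_5$ is $2$, matching $\binom{2}{1}$), but since $\Z_p$ is a field this does not affect the cancellation argument.
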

\begin{proof}
$\coboundarymap_2(\tau \lambda)$ will contain ${\bf x}^\mu$, $\mu = \left(\lambda '_1, \lambda ''_1, \lambda_2, \lambda_3, \ldots, \lambda_k\right)$ for every $\rho_p(\lambda'_1) \cup \rho_p(\lambda''_1) = \rho_p(\lambda_1)$ by the reduced binomial expansion noted in \ref{GathersNeedNoExtensions}.  The preimage of ${\bf x}^\mu$ by $\coboundarymap_2$ contains at most $\lambda$ and the partition \[\left(\lambda'_1, \lambda''_1 + \lambda_2, \lambda_3, \ldots, \lambda_k\right) = \nu,\] the latter when $\alpha_p \left( \lambda_1'', \lambda_2 \right) = 0$.  This is because the third term of $\coboundarymap_2 (\tau \lambda')$ will be of the form \[x_0^{\lambda '_1}(x_1 + x_2)^{\lambda''_1 + \lambda_2}x_3^{\lambda_3}\cdots x_k^{\lambda_k}.\]  Therefore if ${\bf x}^\mu$ is to vanish then $\tau \nu$ must appear in linear combination with $\tau \lambda$.  This is exactly the definition of $\nu \in \ann \lambda$.
\end{proof} 

Problems arise when $\rho_p(\lambda''_1) \cup \rho_p(\lambda_2) \ne \rho_p(\lambda''_1 + \lambda_2)$, since then $\nu$ cannot contribute the requisite cancelling monomial to the image.  This happens when $\alpha_p(\lambda ''_1, \lambda_2) > 0$, or equivalently when $\alpha_p(\nu) < \alpha_p(\lambda)$.  This happens strikingly often; suppose we begin with the partition $(4, 4, 4)$ and set $p = 3$.  Then we can split $\rho_3(4) = (3, 1)$ as $(1) \cup (3)$ and form an element $(1, 3 + 4, 4) = (1, 7, 4) \in \theta(4, 4, 4) \subseteq \Theta^1(\{(4, 4, 4)\})$.  $(1, 7, 4)$ can be reordered as $(4, 7, 1)$ and then $\rho_3(4)$ can again be split as $(1) \cup (3)$, giving an element $(1, 3 + 7, 1) = (1, 10, 1) \in \theta(4, 7, 1) \subseteq \Theta^2(\{(4, 4, 4)\}) \subseteq \ann (4, 4, 4)$.  Since $\alpha_3(10, 1, 1) = 1$ and $\alpha_3(4, 4, 4) = 2$, we have constructed our desired $\nu$ with $\alpha_p(\nu) < \alpha_p(\lambda)$ and $\nu \in \ann \lambda$.  This same game can actually be played with every non-carry-minimal partition, which gives us the first half of our exhaustiveness argument:


\begin{thm}\label{AnnihilatorLowersCarry} 
If $\lambda$ is a non-carry-minimal partition in base $p$ and has carry-count $\alpha_p(\lambda)$, there exists an $\lambda ' \in \ann \lambda$ with $\alpha_p(\lambda ') < \alpha_p(\lambda)$.
\end{thm}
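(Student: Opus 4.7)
The plan is to construct $\lambda' \in \ann\lambda$ with strictly smaller carry count via a sequence of $\theta$-moves whose final step triggers a base-$p$ carry. The key preliminary computation is that a single $\theta$-move $(\tilde\lambda_1, \tilde\lambda_2, \ldots) \mapsto (\tilde\lambda_1', \tilde\lambda_1'' + \tilde\lambda_2, \ldots)$ changes $\alpha_p$ by exactly $-\alpha_p(\tilde\lambda_1'', \tilde\lambda_2)$; this follows from the identity $\alpha_p(\nu) = \bigl(\sum_i \sigma_p(\nu_i) - \sigma_p(\weight{\nu})\bigr)/(p-1)$ combined with the fact that a clean split satisfies $\sigma_p(\tilde\lambda_1) = \sigma_p(\tilde\lambda_1') + \sigma_p(\tilde\lambda_1'')$. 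Consequently, any single $\theta$-move whose recombination triggers at least one carry strictly decreases $\alpha_p$.

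Next I would locate the raw material for such a carry. Non-carry-minimality forces $\alpha_p(\lambda) > 0$, so $\hat\lambda := \bigcup_i \rho_p(\lambda_i)$ satisfies $\alpha_p(\hat\lambda) > 0$. At the lowest position $j$ where the standard base-$p$ summation of $\hat\lambda$ carries, there is no carry-in, so $\hat\lambda$ must contain at least $p$ copies of $p^j$. Because each group $\rho_p(\lambda_i)$ holds at most $p - 1$ copies of $p^j$, these copies span at least two distinct entries of $\lambda$, pinning down candidate source-destination pairs.

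The strategy is then to accumulate $p$ copies of $p^j$ in a single entry. Carry-free $\theta$-moves relocate one copy of $p^j$ from an extractable source entry $\lambda_i > p^j$ (one that contains $p^j$ in its base-$p$ expansion) into a chosen destination $\lambda_{i_*}$ whose digit at position $j$ is at most $p - 2$; each such move preserves $\alpha_p$ but increments the destination digit. Iterating raises the digit of $\lambda_{i_*}$ to $p - 1$; one final $\theta$-move extracting $p^j$ from a suitable remaining entry and adding it to $\lambda_{i_*}$ then triggers a carry, delivering the required $\lambda' \in \ann\lambda$.

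The main obstacle is verifying this accumulation is always feasible. The delicate case is when many entries are exactly $p^j$ and so cannot directly donate $p^j$ without collapsing to zero. This is handled by a preparatory \emph{unlocking} move that first routes a different power $p^{j'}$ (extracted from a non-pure entry) into the locked entry, making it strictly greater than $p^j$ and thereby extractable. The only configuration resistant to any such unlocking is one in which every entry of $\lambda$ is itself a pure power of $p$; in that case $\lambda$ is a power-of-$p$ partition and Corollary \ref{ExhaustivenessOfPOP} (combined with Corollary \ref{SimilarityOfPOP}) forces $\lambda$ to be carry-minimal, contradicting the hypothesis. Hence the accumulation always succeeds and the sequence of $\theta$-moves yields the desired $\lambda'$.
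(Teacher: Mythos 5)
Your proposal is correct and takes essentially the same route as the paper: both identify a power $p^j$ of which $\bigcup_i \rho_p(\lambda_i)$ contains at least $p$ copies, then use $\theta$-moves to herd those copies into a single slot until the recombination step forces a carry, the quantitative engine being the observation that a clean split followed by a carrying addition strictly drops $\alpha_p$. You are in fact more explicit than the paper about the one delicate point, namely entries equal to $p^j$ that cannot donate their copy without vanishing (the paper only gestures at this with its parenthetical about leaving one $p^k$ behind and then asserts the accumulation finishes "after at most $p$ many steps"). The one assertion you leave unproved is that the all-pure-powers configuration is the \emph{only} obstruction to unlocking: when several source entries equal $p^j$ and few entries carry spare digits, one must check there is enough foreign material to unlock them all, which follows from $\phi_p(\lambda) \ge p-1$ (forced by non-carry-minimality via \ref{LargeSplitCausesCarry}) together with choosing the accumulating destination to be an entry of the form $c\,p^j$ when one exists; this is a looseness your write-up shares with, and does not worsen relative to, the paper's own proof.
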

\begin{proof}
The nearest (in terms of splitting distance) power-of-$p$ partition for which $\lambda$ may be gathered is given by $\bigcup_i \rho_p(\lambda_i) = {\hat \lambda}$.  Because $\lambda$ is not carry minimal, there exists--as in the proof of \ref{LargeSplitCausesCarry}--a partition $\mu$ which is power-of-$p$ with $\length{\lambda} \le \length{\mu} < \length{\hat \lambda}$ and $\weight{\lambda} = \weight{\mu}$ that is a $(p-1)$-fold gathering of ${\hat \lambda}$.  Let $p^k$ be a power of $p$ disturbed in the gathering process to form $\mu$ from ${\hat \lambda}$.  Noting that there must be at least $p$ copies of $p^k$ present in ${\hat \lambda}$, we can iteratively separate out the copies of $p^k$ in our original partition, $\lambda$.

Borrowing the notation of the construction from \ref{AnnihilatorIsAptlyNamed}, we begin by permuting $\lambda$ such that $p^k \in \rho_p(\lambda_{\sigma 1})$, then taking $\lambda '_1$ to be all the copies of $p^k$ in $\rho_p(\lambda_{\sigma 1})$ and $\lambda ''_1$ to be everything else (i.e., $\lambda '_1 + \lambda ''_1 = \lambda_{\sigma 1}$), then turning our attention to $(\lambda '_1, \lambda ''_1 + \lambda_2, \ldots, \lambda_k) \in \ann \lambda$.  (Of course, if $\rho_p(\lambda_{\sigma 1}) = \bigcup_{i=1}^j (p^k)$ for some $j$, we must leave one $p^k$ in $\lambda ''_1$ because $\lambda ''_1$ cannot equal zero.)  After permuting by the cycle $\sigma = (1\, 2)$, we then call this freshly constructed partition $\lambda$.

At each step we select some $\sigma \in S_{\length{\lambda}}$ such that $2$ is undisturbed and $p^k \in \rho_p((\sigma \lambda)_1)$.  We then reverse the above construction, splitting the sum of all copies of $p^k$ present in $(\sigma \lambda)_1$ into $\lambda ''_1$ and the remainder into $\lambda '_1$, then collecting $\lambda ''_1$ with the $p^k$ accumulating in $\lambda_2$, each time generating a new multi-index $(\lambda '_1, \lambda ''_1 + \lambda_2, \ldots, \lambda_{\sigma k})$ that lies in $\ann \lambda$ (with the same caveat about $\lambda_1 = \bigcup_{i=1}^j (p^k)$ for some $j$).  After at most $p$ many steps, $\lambda_2 \ge p^{k+1}$, which constructs a partition in $\ann \lambda$ with carry-count reduced by $1$.
\end{proof} 

The second half of the argument lies in noting the following invariant of $\coboundarymap_m$:

\begin{lem}\label{DifferentialPreservesCarry} 
Let $\lambda$ be a partition, and select $\mu$ such that the coefficient $c_\mu$ is nonzero in \[\coboundarymap_m(\tau \lambda) = \sum_{\mu} c_\mu \cdot (\tau \mu).\]  Then $\alpha_p(\lambda) = \alpha_p(\mu)$.
\end{lem}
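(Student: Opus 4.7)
My plan is to analyze every monomial with nonzero integer coefficient that arises when $\coboundarymap_m$ is applied to a single term ${\bf x}^\nu$ of $\tau\lambda$ (where $\nu$ is some permutation of $\lambda$), and to show per monomial that the exponent vector of that monomial, viewed as a multi-index, has the same $p$-carry-count as $\lambda$. Since $\alpha_p$ depends only on the multiset of entries, this will transfer to the $\tau\mu$-expansion of $\coboundarymap_m \tau\lambda$.

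Fix such a $\nu$ and expand $\coboundarymap_m({\bf x}^\nu)$ face by face. The two boundary evaluation maps contribute monomials whose exponent vectors are obtained from $\nu$ by inserting a single zero, with coefficient $\pm 1$; since adjoining a zero is carry-free, the resulting multi-index has $\alpha_p$ equal to $\alpha_p(\nu) = \alpha_p(\lambda)$. Each middle evaluation map $f(x_0, \ldots, x_{i-1}+x_i, \ldots, x_k)$ substitutes $x_{i-1}+x_i$ for the $i$th variable and, after binomial expansion $(x_{i-1}+x_i)^{\nu_i} = \sum_{a=0}^{\nu_i}\binom{\nu_i}{a} x_{i-1}^a x_i^{\nu_i - a}$, contributes monomials with exponent multi-index
\[\mu' = (\nu_1, \ldots, \nu_{i-1}, a, \nu_i - a, \nu_{i+1}, \ldots, \nu_k)\]
and integer coefficient $\pm\binom{\nu_i}{a}$.

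By Kummer's theorem (recorded in Definition~\ref{MultiindexDefns}), the binomial coefficient $\binom{\nu_i}{a}$ is a unit in $\Z_p$ exactly when $\alpha_p(a, \nu_i - a) = 0$. Combined with the associativity property of $\alpha_p$ also stated in that definition, applied to the concatenation $(a, \nu_i - a) \cup (\nu_1, \ldots, \nu_{i-1}, \nu_{i+1}, \ldots, \nu_k)$, this yields
\[\alpha_p(\mu') = \alpha_p(\nu) + \alpha_p(a, \nu_i - a) = \alpha_p(\lambda) + 0 = \alpha_p(\lambda).\]
Thus every monomial in $\coboundarymap_m({\bf x}^\nu)$ whose integer coefficient is a unit in $\Z_p$ has $\alpha_p(\mu') = \alpha_p(\lambda)$.

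Finally, $\coboundarymap_m \tau\lambda$ is a $\Z$-linear combination of such monomials as $\nu$ ranges over the orbit of $\lambda$. When we collect these monomials by their symmetric class and reduce modulo $p$, a nonzero coefficient $c_\mu$ in the expansion $\coboundarymap_m \tau\lambda = \sum_\mu c_\mu \tau\mu$ forces at least one contributing monomial to have its integer coefficient nonzero in $\Z_p$; the per-monomial argument then gives $\alpha_p(\mu) = \alpha_p(\lambda)$. The principal content of the proof is the Kummer-plus-associativity identity above; the remaining work is simply bookkeeping to handle the possibility of cancellation when contributions from different $\nu$'s are collected into a single $\tau\mu$-coefficient.
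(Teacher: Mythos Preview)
Your proof is correct and follows essentially the same approach as the paper's: both arguments reduce to the identity $\alpha_p(\mu') = \alpha_p(\lambda) + \alpha_p(a,\nu_i-a)$ obtained from the associativity of carry, combined with Kummer's observation that the binomial coefficient $\binom{\nu_i}{a}$ survives mod $p$ exactly when $\alpha_p(a,\nu_i-a)=0$. Your write-up is somewhat more careful than the paper's --- you explicitly treat the outer faces (zero insertion), all interior faces rather than just the first, and the possibility of cancellation among contributions to the same monomial --- whereas the paper's proof simply records the key identity for a single representative split $\lambda_1 = \lambda_1' + \lambda_1''$ and leaves the rest implicit.
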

\begin{proof}
Let $\lambda '_1, \lambda ''_1$ be such that $\mu = (\lambda \setminus (\lambda_1)) \cup (\lambda'_1, \lambda''_1)$.  Then, \[ \alpha_p(\mu) = \alpha_p(\lambda '_1, \lambda ''_1, \lambda_2, \ldots, \lambda_k) = \alpha_p(\lambda '_1, \lambda ''_1) + \alpha_p(\lambda_1, \ldots, \lambda_k) = 0 + \alpha_p(\lambda),\] where the last equality stems from noticing that $\rho_p(\lambda'_1) \cup \rho_p(\lambda''_1) = \rho_p(\lambda_1)$, as remarked upon in \ref{GathersNeedNoExtensions}.
\end{proof} 

\begin{thm}\label{NoNonCarryMinimals} 
If $\lambda$ is not a $p$-carry-minimal partition, then $\tau \lambda$ cannot participate in a cocycle.
\end{thm}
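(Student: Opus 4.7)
The plan is to combine Lemma~\ref{DifferentialPreservesCarry}, Lemma~\ref{AnnihilatorIsAptlyNamed}, and Theorem~\ref{AnnihilatorLowersCarry} to force a contradiction from the hypothesis that $\tau\lambda$ participates in some cocycle. The key new observation is that carry-count is not merely preserved monomial-by-monomial under $\coboundarymap_2$, but in fact refines the kernel of $\coboundarymap_2$ into a direct sum of subspaces, each of which is itself closed under being a cocycle.

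First I would observe that the module of $k$-variable symmetric polynomials of a fixed homogeneous degree $n$ decomposes as a direct sum indexed by $p$-carry count, with the $\alpha$-th summand spanned by those $\tau\mu$ such that $\alpha_p(\mu) = \alpha$. By \ref{DifferentialPreservesCarry}, the differential $\coboundarymap_2$ respects this decomposition: it sends the carry-count-$\alpha$ stratum in dimension $k$ into the carry-count-$\alpha$ stratum in dimension $k+1$. Consequently, for any $f = \sum_\mu c_\mu \tau\mu \in \ker\coboundarymap_2$, the carry-homogeneous piece
\[f_\alpha \;=\; \sum_{\alpha_p(\mu) = \alpha} c_\mu\,\tau\mu\]
is itself a cocycle, since $\coboundarymap_2 f = \sum_\alpha \coboundarymap_2 f_\alpha$ and the various strata are linearly independent.

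Next, I would argue by contradiction: suppose $\lambda$ is not $p$-carry minimal and that $\tau\lambda$ appears with nonzero coefficient in some cocycle $f$. Write $\alpha = \alpha_p(\lambda)$; then $\tau\lambda$ appears in the cocycle $f_\alpha$. Applying \ref{AnnihilatorIsAptlyNamed} to the cocycle $f_\alpha$, every $\tau\lambda'$ with $\lambda' \in \ann\lambda$ must also appear in $f_\alpha$ with nonzero coefficient. But every symmetrized monomial occurring in $f_\alpha$ has carry count exactly $\alpha$, so this forces $\alpha_p(\lambda') = \alpha$ for all $\lambda' \in \ann\lambda$. This directly contradicts \ref{AnnihilatorLowersCarry}, which produces some $\lambda' \in \ann\lambda$ with $\alpha_p(\lambda') < \alpha$. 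Hence no such cocycle $f$ can exist.

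There is not much of an obstacle remaining at this stage; the serious work has already been done in \ref{AnnihilatorLowersCarry}, whose ``alternative gathering'' construction is what manufactures a witness in $\ann\lambda$ with strictly smaller carry count. Without that lemma the present theorem has no teeth, and with it the argument is essentially the one-line observation that carry-count gradings let us restrict \ref{AnnihilatorIsAptlyNamed} to a single carry stratum, where the witness of \ref{AnnihilatorLowersCarry} is a forbidden inhabitant.
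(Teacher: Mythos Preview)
Your argument is correct and follows essentially the same route as the paper: both combine \ref{AnnihilatorLowersCarry}, \ref{AnnihilatorIsAptlyNamed}, and \ref{DifferentialPreservesCarry}, and your packaging of \ref{DifferentialPreservesCarry} as a direct-sum decomposition of $\ker\coboundarymap_2$ by carry count is a clean way to reach the same contradiction the paper obtains by locating the step in the $\ann\lambda$ chain where carry drops.
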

\begin{proof}
By \ref{AnnihilatorLowersCarry}, we have that each non-carry-minimal partition's annihilating set contains another of strictly lower carry-count.  If we follow the construction of $\ann \lambda$ using definition \ref{AnnihilatorDefn}, there must exist partitions $\lambda', \lambda'' \in \ann \lambda$ such that $\lambda''$ is required to cancel an image monomial of $\lambda'$, and $\alpha_p(\lambda'') < \alpha_p(\lambda')$.   Since by \ref{DifferentialPreservesCarry} all of the monomials in $\coboundarymap_2 \tau \lambda''$ have carry-counts distinct from those of $\coboundarymap_2 \tau \lambda'$, they cannot cancel each other, and in turn $\tau \lambda$ cannot participate in a cocycle.
\end{proof} 

\begin{cor}\label{RationalProjection} 
Let $\{\beta_i\}_i$ be the indecomposable basis associated to the subspace of cocycles of dimension $k$, degree $n$, and characteristic $p$.  Then $\pi_p(\zeta_k^n) = \sum_i a_i \beta_i$, where the $a_i$ are all non-zero.
\end{cor}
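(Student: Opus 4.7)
My plan is to compute the decomposition of $\pi_p\zeta_k^n$ against the indecomposable basis explicitly and read off that every coefficient equals $1$. I start by noting that the explicit expansion of $\zeta_k^n$ given just after Definition \ref{zetaDefn}, combined with the remark at the end of Section \ref{BasicResults} that the multinomial coefficients corresponding to non-carry-minimal $\lambda$ vanish modulo $p$, makes $\pi_p\zeta_k^n$ equal to $\sum_{\mu} c_\mu\,\tau\mu$ summed over partitions $\mu\vdash n$ of length $k$, with $c_\mu$ nonzero exactly for those $\mu$ that are $p$-carry-minimal. Combining Theorems \ref{GathersNeedNoExtensions} and \ref{NoNonCarryMinimals} with Lemma \ref{IndecomposableBasis}, I identify the indecomposable basis as $\{\beta_\lambda\}$, indexed by power-of-$p$ partitions $\lambda$ of weight $n$ and length $\ge k$ satisfying either $\length{\lambda}-k\le p-2$ or $\lambda=\rho_p(n)$, where $\beta_\lambda := \sum_{\mu\in T^{\length{\lambda}-k}(\lambda)} c_\mu\,\tau\mu$ uses the very same coefficients $c_\mu$.

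The main move is to group the terms of $\pi_p\zeta_k^n$ by the canonical power-of-$p$ parent $\hat\mu:=\bigcup_i\rho_p(\mu_i)$ of each carry-minimal $\mu$, so that the grouping visibly yields $\sum_\lambda \beta_\lambda$. That $\beta_{\hat\mu}$ is always a basis element follows easily from $\alpha_p(\mu)=\alpha_p(\hat\mu)$: if $\alpha_p(\hat\mu)=0$, then $\hat\mu=\rho_p(n)$ (the only power-of-$p$ partition of weight $n$ with vanishing carry count, by Corollary \ref{SimilarityOfPOP}), placing us in the ``shortest POP'' clause; otherwise the contrapositive of Theorem \ref{LargeSplitCausesCarry} applied to the carry-minimal $\mu$ forces $\phi_p(\mu)=\length{\hat\mu}-k\le p-2$, placing us in the other clause.

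I expect the hard part to be uniqueness: showing $\hat\mu$ is the \emph{only} basis-giving power-of-$p$ parent of $\mu$, so the grouping is well-defined and no two basis elements share a summand. The strategy is to exploit the rigid length increment under splittings: every power-of-$p$ parent of $\mu$ arises from $\hat\mu$ by iterated application of the atomic splitting $p^e\mapsto (p^{e-1})^p$ inside individual entries, each of which adds exactly $p-1$ to the length. Any competitor $\lambda\ne\hat\mu$ therefore satisfies $\length{\lambda}\ge \length{\hat\mu}+(p-1)\ge k+(p-1)$, simultaneously violating the bound $\length{\lambda}-k\le p-2$ and exceeding $\length{\rho_p(n)}\le\length{\hat\mu}$. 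Once uniqueness is in hand, the orbits $T^{\length{\lambda}-k}(\lambda)$ over distinct basis power-of-$p$ partitions partition the carry-minimal $\mu$ of length $k$; summing inside each orbit reproduces $\beta_\lambda$, and the total sum gives $\pi_p\zeta_k^n=\sum_\lambda\beta_\lambda$ with $a_\lambda=1\ne 0$ for every basis element.
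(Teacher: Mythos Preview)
Your argument is correct, but it takes a substantially different and much longer route than the paper's. The paper's proof is essentially two lines and never identifies what the $\beta_i$ are: since every indecomposable $\beta_i$ is a nonzero cocycle, by Theorem~\ref{NoNonCarryMinimals} its support consists entirely of carry-minimal $\tau\mu$; by Lemma~\ref{IndecomposableBasis} the supports of distinct $\beta_i$ are disjoint; and by the expansion after Definition~\ref{zetaDefn}, $\pi_p\zeta_k^n$ has a nonzero coefficient on \emph{every} carry-minimal $\tau\mu$. Hence picking any monomial in the support of $\beta_i$ forces $a_i\ne 0$. No description of the basis is needed.

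Your approach instead explicitly pins down the basis as the gathered cocycles $\{\beta_\lambda\}$ and shows $\pi_p\zeta_k^n=\sum_\lambda\beta_\lambda$ with every coefficient equal to $1$. This is strictly stronger, and the orbit-partition argument you give (canonical parent $\hat\mu$, uniqueness via the $(p-1)$-length jump for any other power-of-$p$ parent) is a nice self-contained proof of what the paper later packages as Lemma~\ref{GatheringIntersection}, Theorem~\ref{PowersOfPSpan}, and Theorem~\ref{ActualCocycleCount}. Two caveats: first, the sentence ``Combining Theorems~\ref{GathersNeedNoExtensions} and~\ref{NoNonCarryMinimals} with Lemma~\ref{IndecomposableBasis}, I identify the indecomposable basis as $\{\beta_\lambda\}$'' is under-justified as written---you also need indecomposability of each $\beta_\lambda$ (this is Corollary~\ref{GathersAreIndecomposable}, which is proven later but does not depend on the present corollary), and you need that the $\beta_\lambda$ exhaust the basis, which only follows \emph{after} your covering/uniqueness argument shows the orbits $T^{\ell(\lambda)-k}(\lambda)$ partition all carry-minimal $\mu$ (so any hypothetical extra indecomposable would have empty support). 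Second, because your route effectively proves the full classification en passant, you are reordering the paper's logic: what the paper derives \emph{from} Corollary~\ref{RationalProjection} you are proving \emph{on the way to} it. That is legitimate, but for the corollary as stated the paper's abstract argument is far more economical.
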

\begin{proof}
Immediate from the alternative expansions of $\zeta_k^n$ noted after \ref{zetaDefn} and its decomposition into a sum of indecomposables.
\end{proof} 

In addition to the exhaustiveness above, we can also use these annihilating sets to achieve indecomposability of resymmetrized gatherings of \ref{GathersNeedNoExtensions}.

\begin{thm}\label{AnnihilatorSetIsTmnk} 
Let $T$ be a set in $T^m(n, k)$ (as defined in \textsection\ref{Gathering}), where $m$, $n$, and $k$ satisfy the conditions of \ref{GathersNeedNoExtensions}.  Then for every $\lambda \in T$, $\ann \lambda = T$.
\end{thm}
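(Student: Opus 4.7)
The plan is to prove the two inclusions $\ann \lambda \subseteq T$ and $T \subseteq \ann \lambda$ in turn.

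For $\ann \lambda \subseteq T$, I would show that no $\theta$-step applied to $\lambda \in T$ can introduce a carry. For any $\nu = (\tilde\lambda_1', \tilde\lambda_1'' + \tilde\lambda_2, \tilde\lambda_3, \ldots) \in \theta(\lambda)$, the $\rho_p$-condition on the split forces $\alpha_p(\tilde\lambda_1', \tilde\lambda_1'') = 0$, and the associativity property of $\alpha_p$ noted in \ref{MultiindexDefns} yields
\[ \alpha_p(\nu) \;=\; \alpha_p(\lambda) \;-\; \alpha_p(\tilde\lambda_1'', \tilde\lambda_2). \]
Under the hypothesis of \ref{GathersNeedNoExtensions}, $\lambda$ is carry-minimal of length $k$ and weight $n$ (by \ref{CarryMinGathersAreCarryMin} when $m < p - 1$, or by the direct computation $\alpha_p(\rho_p(n)) = 0$ when $\hat\lambda = \rho_p(n)$). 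Since $\nu$ has the same length and weight as $\lambda$, we have $\alpha_p(\nu) \geq \alpha_p(\lambda)$; combined with the displayed identity this forces $\alpha_p(\tilde\lambda_1'', \tilde\lambda_2) = 0$, i.e., $\rho_p(\tilde\lambda_1'' + \tilde\lambda_2) = \rho_p(\tilde\lambda_1'') \cup \rho_p(\tilde\lambda_2)$. Therefore $\nu$ and $\lambda$ have equal atom multisets $\bigcup_i \rho_p(\cdot) = \hat\lambda$, so $\nu$ is itself an $m$-fold gathering of $\hat\lambda$ and lies in $T$. Iteration completes this inclusion.

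For $T \subseteq \ann \lambda$, I would realize each $\mu \in T$ as an unordered partition of $\hat\lambda$'s atom multiset into $k$ nonempty sub-multisets, with fewer than $p$ copies of any given power of $p$ per group. The composite of $\theta$ with permutation transfers a sub-multiset of atoms from one group into another, provided the destination does not overflow. Given $\lambda, \mu \in T$, I would match the groups of $\lambda$ bijectively with those of $\mu$; if $\lambda \ne \mu$, there is some atom $a$ and groups $i, j$ for which $\lambda$'s group $i$ has an $a$-surplus and $\lambda$'s group $j$ an $a$-deficit relative to their matches in $\mu$. The single-atom transfer of $a$ from group $i$ to group $j$ via $\theta$ is valid, since $\mu$'s group $j$ has at most $p - 1$ copies of $a$ and therefore $\lambda$'s group $j$ currently has at most $p - 2$; adding one $a$ keeps its count below $p$. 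Iterating strictly decreases a nonnegative discrepancy measure, so after finitely many moves $\lambda$ agrees with $\mu$.

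The main obstacle is in the combinatorial connectivity step: when $\lambda$'s source group $i$ is a singleton $\{a\}$, the split $\tilde\lambda_1' = 0$ is excluded and $a$ cannot be extracted directly. I would resolve this by first importing an atom $b$ that the matching prescribes for group $i$ in $\mu$, and only then splitting off $a$ to its destination. Confirming that such auxiliary two-step moves can always be arranged to make monotone progress toward $\mu$, rather than undoing earlier transfers, is the delicate combinatorial bookkeeping needed to finish the argument.
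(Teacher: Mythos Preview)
Your approach is essentially the same as the paper's, though considerably more explicit. The paper's proof defines a discrepancy measure $d(\lambda,\mu)$ counting the number of slots in which $\lambda$ and $\mu$ differ, asserts that under the hypotheses of \ref{GathersNeedNoExtensions} one can always find $\lambda' \in T \cap \ann\lambda$ with $d(\lambda',\mu) < d(\lambda,\mu)$, declares this ``obvious,'' and concludes by induction. This is precisely your atom-transfer argument for $T \subseteq \ann\lambda$.

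Where you go beyond the paper: you treat the inclusion $\ann\lambda \subseteq T$ separately and give a clean carry-count argument for it (the paper does not isolate this direction at all), and you explicitly flag the singleton-group obstruction in the connectivity step, which the paper's ``obvious'' simply absorbs. Your proposed two-step fix (import a prescribed atom before exporting $a$) does work; the bookkeeping is not as delicate as you fear, since the imported atom $b$ can always be chosen from some group with a $b$-surplus relative to the matching, and the resulting intermediate partition still lies in $T$ by your first-inclusion argument. So your proof is complete in outline and in fact more careful than the paper's own.
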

\begin{proof}
For a fixed weight $n$ and length $k$, we define a function $d$ on unordered partitions of this type that takes a pair $(\lambda, \mu)$ to the number of slots in which $\lambda$ and $\mu$ differ.  Then, for distinct $\lambda, \mu \in T$, it suffices to show that there is a $\lambda' \in T \cap \ann \lambda$ such that $d(\lambda', \mu)$ is strictly less than $d(\lambda, \mu)$.  Under the conditions imposed in \ref{GathersNeedNoExtensions}, which mean to prevent any $p$ copies of a particular $p^k$ from being summed together during the gathering procedure used to form $T$, this is obvious.  Induction then yields that $\mu \in \ann \lambda$ and $\ann \lambda = T$.
\end{proof} 

\begin{cor}\label{GathersAreIndecomposable} 
The cocycles resulting from \ref{GathersNeedNoExtensions} are indecomposable.
\end{cor}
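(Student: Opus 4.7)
The plan is to combine Lemma \ref{AnnihilatorIsAptlyNamed} with Theorem \ref{AnnihilatorSetIsTmnk} in a short contradiction argument. The cocycles produced by Theorem \ref{GathersNeedNoExtensions} have the form $f = \sum_{\mu \in T} c_\mu \tau\mu$ where $T \in T^m(n,k)$ is a fixed orbit of gatherings of a power-of-$p$ partition and every $c_\mu$ is nonzero. The key observation is that any subcocycle carrying even one $\tau\mu$ with $\mu \in T$ is forced by the annihilating-set machinery to carry all the others, so the only possible ``splitting'' of $f$ is a trivial one.

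Concretely, I would assume a nontrivial decomposition $f = f_1 + f_2$ in the sense of Definition \ref{DecomposableDefn}, witnessed by a set partition $\{B_1, B_2\}$ of $B_k^n$ with both $f_1, f_2$ nonzero elements of $\ker \coboundarymap_2$ and $f_i \in \mspan B_i$. Since the support of $f$ is contained in $\{\tau\mu \mid \mu \in T\}$ and $f_1 + f_2 = f$, at least one $\tau\mu$ with $\mu \in T$ appears with nonzero coefficient in $f_1$ (relabeling if necessary). Applying Lemma \ref{AnnihilatorIsAptlyNamed} to $f_1$ as an independent symmetric cocycle forces $\tau\mu'$ to appear in $f_1$ for every $\mu' \in \ann\mu$. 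By Theorem \ref{AnnihilatorSetIsTmnk}, under the hypotheses carried over from \ref{GathersNeedNoExtensions} we have $\ann\mu = T$, and so $\{\tau\mu' \mid \mu' \in T\} \subseteq B_1$. Since $\{B_1, B_2\}$ is a set partition, $B_2$ then contains no element of $T$, and $f_2 \in \mspan B_2$ has zero coefficient on each $\tau\mu'$ for $\mu' \in T$. Comparing with $f_2 = f - f_1$ and noting that $f$'s support lies in $T$ forces $f_2 = 0$, contradicting the nontriviality of the decomposition.

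No serious obstacle is expected here: the hard content (reduced binomial expansions in $\Z_p$, annihilating sets, and the identification $\ann\mu = T$) has already been packaged into \ref{AnnihilatorIsAptlyNamed} and \ref{AnnihilatorSetIsTmnk}, so the corollary amounts to bookkeeping. The only subtlety to flag in the write-up is that Lemma \ref{AnnihilatorIsAptlyNamed} must be invoked on $f_1$ itself, not on $f$, so one should briefly remark that $f_1$ is a genuine symmetric cocycle containing $\tau\mu$ with nonzero coefficient before concluding that its support swallows all of $\ann\mu$.
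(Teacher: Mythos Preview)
Your proposal is correct and follows essentially the same approach as the paper: the paper's proof is the single line ``Immediate from \ref{AnnihilatorSetIsTmnk}, \ref{AnnihilatorIsAptlyNamed}, and \ref{GatheringIntersection},'' and your write-up simply unpacks this by showing that any nontrivial summand $f_1$ of $f$ must, via \ref{AnnihilatorIsAptlyNamed} and \ref{AnnihilatorSetIsTmnk}, swallow all of $T$ and hence leave $f_2 = 0$. The paper additionally cites \ref{GatheringIntersection}, but that result is already baked into (indeed, is what makes possible) the equality $\ann\mu = T$ of \ref{AnnihilatorSetIsTmnk}, so your omitting a separate invocation of it is harmless.
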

\begin{proof}
Immediate from \ref{AnnihilatorSetIsTmnk}, \ref{AnnihilatorIsAptlyNamed}, and \ref{GatheringIntersection}.
\end{proof} 

\subsection{Counting Additive $2$-Cocycles}\label{CountingAdditive2Cocycles}

In the end, we are studying additive cocycles with intent to eventually investigate the multiplicative cocycles, so that we may in turn apply these results to maps in algebraic topology.  The rank of these maps is related to the number of multiplicative cocycles present in a particular degree and dimension, which is in turn bounded from above by the number of additive cocycles present in the same degree and dimension (a statement made precise in \textsection\ref{TheFunctorspecHBU2k}).  The number-theoretic properties of the additive cocycles suggests a particular way to count them using generating functions, for which we give a construction below.

\begin{defn}\label{GeneratingFn} 
We define $C^p_{nk} \in \Z$ to be the coefficients of the generating function \[\prod_{i=0}^\infty (1 - t x^{p^i})^{-1} = \sum_{n, k} C^p_{nk} x^n t^k.\]
\end{defn} 

\begin{lem}\label{GFCountsPOP} 
$C^p_{nk}$ equals the number of ways to write $n$ as a sum of $k$ many powers of $p$.
\end{lem}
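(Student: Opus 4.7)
The plan is to expand the infinite product into a geometric series in each factor and read off the coefficient combinatorially; the identification is then essentially tautological.

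First, I would note that for each $i \ge 0$, one has the formal expansion
\[
(1 - t x^{p^i})^{-1} = \sum_{a_i \ge 0} t^{a_i} x^{a_i p^i}.
\]
Substituting into the defining product yields
\[
\prod_{i=0}^\infty (1 - t x^{p^i})^{-1} = \sum_{(a_0, a_1, \ldots)} t^{\sum_i a_i} \, x^{\sum_i a_i p^i},
\]
where the outer sum runs over sequences $(a_0, a_1, \ldots)$ of non-negative integers with only finitely many $a_i$ nonzero (so that the product makes sense as a formal power series in $x$ and $t$).

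Next, I would collect by monomials $x^n t^k$: extracting the coefficient $C^p_{nk}$ shows that it counts the number of such sequences $(a_i)_{i \ge 0}$ satisfying simultaneously $\sum_i a_i p^i = n$ and $\sum_i a_i = k$. The final step is the bijection between these sequences and power-of-$p$ multi-indices of weight $n$ and length $k$: given a sequence $(a_i)$, form the unordered multi-index in which $p^i$ appears with multiplicity $a_i$; conversely, from a power-of-$p$ multi-index $\lambda$ of weight $n$ and length $k$, set $a_i = |\{j : \lambda_j = p^i\}|$. Both conditions on $(a_i)$ translate directly into $\weight{\lambda} = n$ and $\length{\lambda} = k$.

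There is no real obstacle here, since the generating function has been engineered precisely to track these two statistics; the only care required is to make explicit that ordered sequences $(a_i)$ correspond to unordered multi-indices (equivalently, to partitions into parts that are powers of $p$), so that no overcounting occurs. With that observation, the identification is immediate.
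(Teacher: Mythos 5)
Your proposal is correct and follows essentially the same route as the paper: expand each factor $(1 - t x^{p^i})^{-1}$ as a geometric series, multiply out, and read off the coefficient of $x^n t^k$ as counting sequences $(a_i)$ with $\sum_i a_i p^i = n$ and $\sum_i a_i = k$. Your explicit remark that ordered sequences of multiplicities correspond bijectively to unordered power-of-$p$ multi-indices is a small clarification the paper leaves implicit, but the argument is the same.
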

\begin{proof}
A factor of the product looks like \[\left(1 - t x^{p^i}\right)^{-1} = \sum_{j=0}^\infty t^j x^{j p^i}.\]  Expanding these products reveals that for particular $n$ and $k$, the summands contributing to $C^p_{nk}$ have the form $t^k x^n$ with $n = \sum_{m=1}^k a_m p^m$ for $a_m \in \N_0$.
\end{proof} 

These summations indeed correspond to cocycles, the proof of which is merely a summation of everything that's led to this point.

\begin{thm}\label{PowersOfPSpan} 
In a particular degree $n$ and number of variables $k$, if a power-of-$p$ multi-index exists, the restriction of $\coboundarymap_2$ to $k$-variable $n$-degree symmetric polynomials has kernel spanned by \[\left\{\tau(\lambda) \mid \hbox{$\lambda$ is a power-of-$p$ multi-index}, \length{\lambda} = k, \weight{\lambda} = n\right\}.\]
\end{thm}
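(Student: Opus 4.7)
The plan is to combine the non-contribution results established in \ref{NoSymmetricZeroes}, \ref{NoNonCarryMinimals}, and \ref{ExhaustivenessOfPOP} with the cocycle status of power-of-$p$ symmetrized monomials from \ref{POPAreCocycles}. Let $f \in \ker \coboundarymap_2$ be a $k$-variable, homogeneous degree $n$ symmetric polynomial, and write it in the monomial symmetric basis as $f = \sum_\mu c_\mu\, \tau(\mu)$, where $\mu$ ranges over multi-indices of weight $n$ and length at most $k$. We must show that the only $\mu$ appearing with $c_\mu \ne 0$ are power-of-$p$ multi-indices of length exactly $k$.

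The argument proceeds by successive elimination. First, by \ref{NoSymmetricZeroes}, no $\mu$ with $0 \in \mu$ can contribute, so we may restrict attention to $\mu$ of length exactly $k$. Next, by \ref{NoNonCarryMinimals}, every $\mu$ with $c_\mu \ne 0$ must be $p$-carry-minimal for its weight and length. Finally, invoking the hypothesis that \emph{some} power-of-$p$ partition of weight $n$ and length $k$ exists, \ref{ExhaustivenessOfPOP} tells us that any non-power-of-$p$ partition of weight $n$ and length $k$ fails to be carry-minimal. Hence every contributing $\mu$ is itself a power-of-$p$ partition of weight $n$ and length $k$.

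For the converse, \ref{POPAreCocycles} gives that each $\tau(\lambda)$ with $\lambda$ a power-of-$p$ multi-index of the appropriate weight and length already lies in $\ker \coboundarymap_2$, and the monomial symmetric polynomials $\{\tau(\lambda)\}$ are linearly independent as basis elements of the symmetric polynomials. Thus the set $\{\tau(\lambda) : \lambda \text{ power-of-}p,\ \length{\lambda}=k,\ \weight{\lambda}=n\}$ is a basis for the kernel in this degree and dimension.

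The main obstacle is conceptual rather than computational: the result is really a packaging of three independently established facts, and the essential subtlety is keeping straight the role of the existence hypothesis. Without a power-of-$p$ partition of weight $n$ and length $k$ on hand, \ref{ExhaustivenessOfPOP} does not apply, and carry-minimal but non-power-of-$p$ partitions can genuinely contribute --- this is precisely the case handled by the gathering constructions of \textsection\ref{Gathering} and theorem \ref{IntroCarryMinAreCocycles}. So the present theorem should be read as the clean corner of the classification where the hypothesis sidesteps that complication entirely.
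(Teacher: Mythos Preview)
Your proof is correct and follows essentially the same logic as the paper's, though you unpack the argument more explicitly. The paper's proof cites \ref{ExhaustivenessOfPOP} and \ref{SimilarityOfPOP} to identify the carry-minimal partitions with the power-of-$p$ ones, then invokes \ref{RationalProjection} (together with indecomposability) for exhaustiveness; you instead cite \ref{NoNonCarryMinimals} directly and supply \ref{NoSymmetricZeroes} and \ref{POPAreCocycles} explicitly, which the paper leaves implicit. Both routes are the same argument at bottom, and your version is arguably the cleaner packaging.
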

\begin{proof}
Using \ref{ExhaustivenessOfPOP} and \ref{SimilarityOfPOP}, we have that exactly the power-of-$p$ multi-indices are $p$-carry minimal.  By our classification in \ref{RationalProjection} we have that they are exhaustive of all $2$-cocycles in the degree and dimension to which they belong, and because they are composed of single symmetrized monomials, they are trivially indecomposable.
\end{proof} 

\begin{cor}\label{NonzeroGFCountsCocycles} 
$C^p_{nk}$ count the number of $n$-degree $k$-dimensional cocycles when $C^p_{nk} \ne 0$.
\end{cor}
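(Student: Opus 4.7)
The plan is to combine the two immediately preceding results: \ref{GFCountsPOP}, which identifies $C^p_{nk}$ with the count of ways to express $n$ as an ordered sum of $k$ powers of $p$ (equivalently, the count of power-of-$p$ multi-indices of weight $n$ and length $k$, up to reordering), and \ref{PowersOfPSpan}, which states that whenever such a multi-index exists, the kernel of $\coboundarymap_2$ restricted to $k$-variable $n$-degree symmetric polynomials is spanned by the $\tau(\lambda)$ associated to these multi-indices.

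First, I would observe that the hypothesis $C^p_{nk} \neq 0$ is, by \ref{GFCountsPOP}, precisely the statement that at least one power-of-$p$ partition $\lambda \vdash n$ with $\length{\lambda} = k$ exists. This is exactly the hypothesis required to invoke \ref{PowersOfPSpan}, which then produces a spanning set $\{\tau(\lambda) \mid \lambda \text{ power-of-}p,\ \weight{\lambda}=n,\ \length{\lambda}=k\}$ for the kernel.

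Second, I would verify that this spanning set is linearly independent, which reduces to noting that distinct partitions $\lambda \neq \mu$ give $\tau(\lambda)$ and $\tau(\mu)$ belonging to disjoint elements of the monomial symmetric basis $B_k^n$, cf. \ref{SymmBasisDefn}; hence the spanning set is in fact a basis. Counting its elements and appealing once more to \ref{GFCountsPOP} yields that the dimension of the cocycle space equals $C^p_{nk}$, as claimed.

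There is essentially no obstacle here: the corollary is a bookkeeping consequence assembled from \ref{GFCountsPOP} and \ref{PowersOfPSpan}. The only point one might dwell on is the linear independence of $\{\tau(\lambda)\}$, but this is immediate from the fact that $\tau$ sends partitions to distinct elements of the monomial symmetric basis.
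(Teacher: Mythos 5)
Your proposal is correct and follows exactly the same route as the paper, which likewise deduces the corollary directly from \ref{GFCountsPOP} and \ref{PowersOfPSpan}. The extra sentence you add verifying linear independence of $\{\tau(\lambda)\}$ is harmless and sound; the paper gets the same fact implicitly, since single symmetrized monomials are trivially indecomposable and indecomposables form a basis by \ref{IndecomposableBasis}.
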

\begin{proof}
Immediate from \ref{GFCountsPOP} and \ref{PowersOfPSpan}.
\end{proof} 

When working in $\Z_2$, the generating function is especially nice, since every number $n$ has a power-of-$2$ representation of length $k$ for every $\sigma_2(n) \le k \le n$.

\begin{lem}\label{GFAlwaysNonzeroForP2} 
For $\sigma_2(n) \le k \le n$, $C_{nk}^2$ is always non-zero.
\end{lem}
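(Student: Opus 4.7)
The plan is to produce, for each $k$ with $\sigma_2(n) \le k \le n$, an explicit power-of-$2$ multi-index of weight $n$ and length $k$; by Lemma \ref{GFCountsPOP}, the existence of any such multi-index forces $C^2_{nk} \ge 1 > 0$.

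The construction proceeds by induction on $k$, starting from $k = \sigma_2(n)$ and moving upward via a splitting procedure that is the inverse of the gathering operator $G_{ij}$. For the base case $k = \sigma_2(n)$, we take $\rho_2(n)$, which is a power-of-$2$ multi-index of length exactly $\sigma_2(n)$ by the definition recalled in \ref{MultiindexDefns}. For the inductive step, suppose we have a power-of-$2$ multi-index $\lambda$ of weight $n$ and length $k < n$. The condition $k < n = \sum_i \lambda_i$ together with each $\lambda_i$ being a power of $2$ forces at least one entry $\lambda_i$ to satisfy $\lambda_i \ge 2$; write $\lambda_i = 2^a$ with $a \ge 1$. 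Replacing $\lambda_i$ by two copies of $2^{a-1}$ produces a new multi-index $\lambda'$ with the same weight $n$, length $k+1$, and entries still powers of $2$.

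Iterating this construction gives, for each integer $k$ with $\sigma_2(n) \le k \le n$, a power-of-$2$ multi-index of weight $n$ and length $k$. By Lemma \ref{GFCountsPOP}, $C^2_{nk}$ counts exactly these multi-indices, so $C^2_{nk} \ge 1$ in this range.

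There is essentially no obstacle: the splitting procedure uses only the fact that $p = 2$ is prime and that any power of $2$ greater than $1$ splits as two equal halves, both still powers of $2$. The reason this argument is specific to $p = 2$ (as opposed to the analogous statement for general $p$) is that splitting $p^a$ requires producing $p$ copies of $p^{a-1}$, increasing length by $p-1$; only when $p = 2$ does each splitting step increase the length by exactly one, so that every intermediate value of $k$ between $\sigma_p(n)$ and $n$ is hit.
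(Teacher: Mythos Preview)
Your proof is correct and essentially the same as the paper's: both observe that for $p=2$ one can pass between power-of-$2$ partitions of adjacent lengths because the step size is $p-1=1$, yielding a power-of-$2$ multi-index at every length in $[\sigma_2(n),n]$. The only cosmetic difference is direction---you split upward from $\rho_2(n)$ while the paper (via the reference to \ref{ExhaustivenessOfPOP}) gathers downward from $(1^n)$.
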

\begin{proof}
As discussed in the proof of \ref{ExhaustivenessOfPOP}, power-of-$p$ multi-indices for a particular degree $n$ begin in dimension $n$ and occur for every dimension $n - c(p-1)$, $c \in \N_0$.  For $p = 2$, this means a power-of-$2$ multi-index occurs in every dimension in the range $\sigma_2(n), \ldots, n$.
\end{proof} 

However, the general case does not appear to be so well off, since there are lengths and weights for which no sum of powers of $3$ can be written.  For instance, $(3, 1, 1, 1)$ and $(3, 3)$ have lengths $4$ and $2$ respectively and both has weight $6$, but there exists no power-of-$3$ multi-index of weight $6$ and length $3$.  Pleasantly enough, because the gathering operation allows us to determine the presence of these intermediate cocycles knowing only what the kernel looks like in locations where power-of-$p$ multi-indices do exist, we can extend $C^p_{nk}$ to cover these middle cases as well.

\begin{thm}\label{ActualCocycleCount} 
Define $D^p_{nk}$ to be $C^p_{nk}$ when $C^p_{nk}$ is non-zero, to be $D^p_{n(k+1)}$ when $C^p_{nk}$ is zero and $k < n$, and $0$ otherwise.  Then $D^p_{nk}$ counts the number of cocycles in $\Z_p$ of degree $n$ and dimension $k$.
\end{thm}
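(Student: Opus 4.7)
The plan begins by unrolling the recursion defining $D^p_{nk}$: iterating the second clause yields $D^p_{nk} = C^p_{nk'}$ where $k'$ is the least integer $\geq k$ with $C^p_{nk'} \neq 0$ (provided $k' \leq n$; otherwise $D^p_{nk} = 0$). The case $k' = k$ is handled directly by \ref{NonzeroGFCountsCocycles}, and the case $k > n$ is handled by \ref{NoSymmetricZeroes}, since every length-$k$ partition of $n$ must then contain a zero entry so no cocycles exist. The substantive case is $k < k' \leq n$.

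For the substantive case I would invoke the arithmetic-progression structure of the lengths of power-of-$p$ partitions of $n$, namely the set $\{\sigma_p(n), \sigma_p(n) + (p-1), \ldots, n\}$, implicit in the proof of \ref{ExhaustivenessOfPOP}. Setting $m = k' - k$, either $k' = \sigma_p(n)$ with $C^p_{nk'} = 1$ and the unique power-of-$p$ partition $\rho_p(n)$, or $k$ lies strictly between two consecutive progression terms, forcing $m \leq p - 2$. In either case the hypothesis of \ref{GathersNeedNoExtensions} is satisfied, so from each power-of-$p$ partition $\lambda$ of length $k'$ we obtain one indecomposable cocycle in dimension $k$ by $m$-fold gathering of $\tau \lambda$ (using \ref{GathersNeedNoExtensions} together with \ref{GathersAreIndecomposable}). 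By \ref{GatheringIntersection} (vacuous in the unique-$\lambda$ subcase) and \ref{AnnihilatorSetIsTmnk}, distinct $\lambda$ yield cocycles supported on disjoint subsets of the monomial symmetric basis, so they are linearly independent, producing $C^p_{nk'}$ pairwise-distinct indecomposable cocycles.

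To close the count I would show these exhaust all indecomposables of dimension $k$. By \ref{NoNonCarryMinimals} it suffices to consider carry-minimal partitions $\mu$ of length $k$, and I would prove each such $\mu$ has refinement $\hat{\mu} = \bigcup_i \rho_p(\mu_i)$ of length exactly $k'$. Since $\hat{\mu}$ is power-of-$p$, $\length{\hat{\mu}}$ lies in the arithmetic progression of power-of-$p$ lengths and satisfies $\length{\hat{\mu}} \geq \length{\mu} = k$, so $\length{\hat{\mu}} \geq k'$. Suppose for contradiction $\length{\hat{\mu}} \geq k' + (p-1)$; then $\phi_p(\mu) \geq (p-1) + m \geq p - 1$, and provided $\alpha_p(\mu) > 0$ this contradicts carry-minimality by \ref{LargeSplitCausesCarry}. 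The residual subcase $\alpha_p(\mu) = 0$ forces $\length{\hat{\mu}} = \sigma_p(n)$, and the inequality $\length{\hat{\mu}} \geq k$ then gives $\sigma_p(n) \geq k$, whence $k' = \sigma_p(n)$ by minimality of $k'$, so $\length{\hat{\mu}} = k'$ here as well. Thus $\mu$ belongs to the $T^m$-set of some power-of-$p$ partition of length $k'$, and \ref{AnnihilatorSetIsTmnk} identifies its associated indecomposable cocycle with one already produced. Finally \ref{IndecomposableBasis} promotes this bijection to the dimension of the cocycle space, giving $D^p_{nk} = C^p_{nk'}$. The principal obstacle is the sub-claim $\length{\hat{\mu}} = k'$; pinning it down is where the structural carry-lowering argument of \ref{LargeSplitCausesCarry} does the essential work.
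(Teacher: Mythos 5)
Your proposal is correct and takes essentially the same route as the paper, whose own proof is just a citation of \ref{GatheringIntersection}, \ref{RationalProjection}, and \ref{GathersAreIndecomposable}; you have simply unpacked that citation into the full argument (gathered cocycles from the nearest power-of-$p$ dimension $k'$ give $C^p_{nk'}$ disjoint indecomposables, and carry-minimality forces every surviving partition of length $k$ to refine to a power-of-$p$ partition of length exactly $k'$). The only cosmetic difference is that you route the exhaustiveness step through \ref{NoNonCarryMinimals} and \ref{LargeSplitCausesCarry} directly rather than through \ref{RationalProjection}, but these rest on the same underlying lemmas.
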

\begin{proof}
This is just successive application of the results \ref{GatheringIntersection}, \ref{RationalProjection}, and \ref{GathersAreIndecomposable}.
\end{proof} 


\subsection{The Generalized Lazard Ring}\label{TheGeneralizedLazardRing}
Lazard demonstrated a ring isomorphism between the universal representing ring for two variable $2$-cocycles and a polynomial ring on countably many generators, a celebrated result in the theory of formal group laws.  Here we provide a similar result for the representing ring of $k$-variable $2$-cocycles.

To begin, since the representing ring selects cocycles over an arbitrary ring $A$, we must demonstrate that our knowledge about the $\Q$ and $\Z_p$ cases is sufficient to determine the rest of the story for arbitrary commutative rings.

\begin{thm}\label{ArbitraryRingDecomp} 
Let $A$ be an abelian group and let $f \in A \otimes \Z[\mathbf{x}]$ be a $k$-variable symmetric $2$-cocycle of homogenous degree $n$.  Then $f$ is of the form \[f = \sum_i a_i \otimes \beta_i(\mathbf{x}),\] where $a_i \in A$ and $\beta_i$ is the relevant indecomposable basis of \ref{IndecomposableBasis}.
\end{thm}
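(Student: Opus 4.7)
The plan is to apply Smith normal form to the integer matrix representation of $\coboundarymap_2$, yielding a universal description of the $A$-cocycle module as a functor of the abelian group $A$. Choosing the monomial symmetric basis $\{\tau(\lambda)\}$ on $P := P_n^{(k)}$ and the monomial basis on $R := \Z[x_0, \mathbf{x}]_n$ expresses $\coboundarymap_2: P \to R$ as an integer matrix $C$. The $A$-cocycle module is then $\ker(C \otimes_\Z A : A^m \to A^M)$.

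Putting $C$ into Smith normal form provides a new $\Z$-basis $\{\beta_i\} \subset P$ and divisors $d_i \in \N_0$ such that
\[ \ker(C \otimes_\Z A) = \bigoplus_i A[d_i] \cdot \beta_i, \]
where $A[d] = \{a \in A : da = 0\}$ and $A[0] = A$. Every $A$-valued cocycle therefore admits an expression $f = \sum_i a_i \otimes \beta_i$ with $a_i \in A$ subject to the (automatic) torsion constraints $d_i a_i = 0$.

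To identify the $\{\beta_i\}$ with the indecomposable basis of \ref{IndecomposableBasis}: the $\beta_i$ with $d_i = 0$ span the integer cocycles, and form a $\Z$-basis that rationally recovers the $\zeta_k^n$ of \cite{AHS01}. The $\beta_i$ with $d_i = p^r > 0$ become cocycles only after reduction modulo $p$, since for any such $p$-power torsion index the tensor factor $A[p^r]$ captures the relevant $\Z_p$-coefficients. By the modular classification of \textsection\ref{IntegralProjection}, specifically Corollary \ref{RationalProjection} combined with Corollary \ref{GathersAreIndecomposable}, these are exactly the integer lifts of the gathered power-of-$p$ cocycles constructed in \textsection\ref{Gathering}. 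Assembling, the $\beta_i$ range over the union of the indecomposable bases across the relevant characteristics, all represented as integer polynomials in $\Z[\mathbf{x}]$.

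The main obstacle is showing that every nonzero Smith divisor $d_i$ is a prime power, not a composite with multiple prime factors, so that the Smith-form basis splits cleanly prime by prime and aligns with the indecomposable bases already classified. This follows from the carry-count invariance of \ref{DifferentialPreservesCarry}: each indecomposable cocycle is sensitive to exactly one characteristic, which precludes the Smith divisors from mixing primes. The remaining bookkeeping — matching divisor multiplicities with the counts of \ref{ActualCocycleCount} — is routine.
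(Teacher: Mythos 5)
Your Smith-normal-form strategy is genuinely different from the paper's argument (which reduces to $A = \Z$ and $A = \Z_{p^l}$ via the structure theorem for finitely generated abelian groups and then inducts on $l$), and its first step is sound: in an SNF basis $\{\beta_i\}$ of $P$ with divisors $d_i$ one does get $\ker(\coboundarymap_2 \otimes A) = \bigoplus_i A[d_i]\,\beta_i$. The gap is in identifying this basis with the indecomposable basis. An SNF basis is far from unique; all you know a priori is that the reductions mod $p$ of those $\beta_i$ with $p \mid d_i$ or $d_i = 0$ form \emph{some} basis of the $\Z_p$-cocycles, not the indecomposable one. To convert a term $a_i \otimes \beta_i$ into a combination of indecomposables $\gamma_j$, you must write $\beta_i = \sum_j c_{ij}\gamma_j + p\,\epsilon_i$ over $\Z$ and then kill the error term $(p a_i) \otimes \epsilon_i$, which requires $p a_i = 0$ and hence that $d_i$ be squarefree. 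So the fact you actually need is that no nonzero $d_i$ is divisible by the square of a prime. By contrast, the obstacle you single out---divisors mixing distinct primes---is harmless: for coprime $m, n$ one has $A[mn] = A[m] \oplus A[n]$, so a divisor $pq$ simply splits its coefficient into a characteristic-$p$ and a characteristic-$q$ contribution.

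Ruling out $d_i = p^2$ is genuine content, and the appeal to \ref{DifferentialPreservesCarry} does not supply it: carry-count invariance concerns which monomials occur in $\coboundarymap_2 \tau\lambda$ for a \emph{fixed} prime and says nothing about the elementary divisors of the integer matrix---in particular it cannot distinguish $d_i = p$ from $d_i = p^2$, and ``prime power,'' which is what you set out to prove, would not suffice for the base change above in any case. What is missing is exactly a computation of the cocycles over $\Z_{p^2}$ (more generally $\Z_{p^l}$), which is the inductive step the paper performs by writing a $\Z_{p^{l+1}}$-cocycle as a lift of a $\Z_{p^l}$-cocycle plus $p^l$ times a $\Z_p$-cocycle. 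With that step supplied, your SNF framework does close up, and it has the virtue of producing the representing ring of \ref{RepresentingRingDescription} in one stroke; without it, the identification of the $\beta_i$ with the indecomposables is an assertion rather than a proof.
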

\begin{proof}
We begin by making a number of standard reductions.  First, since only finitely many terms will appear in $f$, it is sufficient to prove the theorem when $A$ is finitely generated.  Then, for two abelian groups $A \subseteq B$, if the theorem is true for $B$ then it is also true for $A$.  This implies in addition that if the theorem is true for arbitrary $A$ and $B$ if and only if it is true for $A \oplus B$.  Using the structure theorem for finitely generated abelian groups, we have reduced to the cases $A = \Z$ and $A = \Z_{p^l}$ for a prime $p$ and positive $l$.

Using the inclusion property, we can produce the result for $\Z$ by proving it for $\Q \supseteq \Z$.  The authors of \cite{AHS01} have shown that all symmetric $k$-variable $2$-cocycles over $\Q$ of homogenous degree $n$ are of the form $a \cdot \zeta_k^n$ for $a \in \Q$, and $\zeta_k^n$ has a decomposition into indecomposables by \ref{RationalProjection}.  We then can decompose the $\Z_{p^l}$ case inductively; we have demonstrated a classification for $l = 1$ above, and so we assume that we have accomplished the classification up to some $l \ge 1$.  An $f$ with coefficients in $\Z_{p^{l+1}}$ must be of the form \[f = \sum_i a_i \otimes \beta_i + p^l f'\] for some $f'$, which we can think of as a symmetric $2$-cocycle over $\Z_p$.  We can then again decompose $f'$ into a sum of indecomposables and collect coefficients, giving the desired decomposition of $f$.
\end{proof} 

Since we have now shown that all cocycles take our prescribed form, the only piece of the puzzle left is to actually construct the ring, and we do so in steps.

\begin{thm}\label{RepresentingRingDescription} 
The representing ring for symmetric $k$-variable $2$-cocycles is a tensor of polynomial rings, corrected for torsion.
\end{thm}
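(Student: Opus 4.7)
The plan is to realize $L_k$ by generators-and-relations using the indecomposable basis classified in the previous subsections, and then verify the universal property directly from Theorem \ref{ArbitraryRingDecomp}. By Theorem \ref{IndecomposableBasis} together with Theorem \ref{RationalProjection}, the rational indecomposables in $k$ variables are exactly the $\zeta_k^n$ (one per admissible degree $n$), while the characteristic-$p$ indecomposable basis consists of the $\pi_p$-images of these rational generators together with the ``exotic'' cocycles constructed via gathering in Theorem \ref{GathersNeedNoExtensions}. Let $\mathcal{I}$ index the $\zeta_k^n$, and for each prime $p$ let $\mathcal{E}_p$ index the indecomposable $\Z_p$-basis elements $\beta_{p,j}$ that do not arise by projection from $\Z$. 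I would then set
\[
L_k \;=\; \Z[a_n : n \in \mathcal{I}] \;\otimes_\Z\; \bigotimes_p \frac{\Z[b_{p,j} : j \in \mathcal{E}_p]}{(p\cdot b_{p,j} \,:\, j \in \mathcal{E}_p)},
\]
so that each exotic mod-$p$ indecomposable contributes a polynomial generator killed by $p$.

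Next I would define the universal cocycle $F_k \in L_k\otimes_\Z\Z[\mathbf{x}]$ by
\[
F_k \;=\; \sum_{n \in \mathcal{I}} a_n \cdot \zeta_k^n \;+\; \sum_p \sum_{j \in \mathcal{E}_p} b_{p,j} \cdot \tilde{\beta}_{p,j},
\]
where each $\tilde{\beta}_{p,j} \in \Z[\mathbf{x}]$ is an arbitrary integral lift of $\beta_{p,j}$. To verify $\coboundarymap_2 F_k = 0$, observe that $\coboundarymap_2 \zeta_k^n = 0$ by Definition \ref{zetaDefn}, and that $\coboundarymap_2 \tilde{\beta}_{p,j}$ lies in $p\,\Z[\mathbf{x}]$ because $\beta_{p,j}$ is a cocycle modulo $p$; the relation $p\cdot b_{p,j}=0$ in $L_k$ then annihilates this contribution. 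The same observation shows that $F_k$ is independent of the choice of lift $\tilde{\beta}_{p,j}$, so the construction is well-defined.

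For the universal property, Theorem \ref{ArbitraryRingDecomp} provides, for any commutative ring $A$ and cocycle $f \in A \otimes \Z[\mathbf{x}]$, a unique expansion against the indecomposable basis. The coefficients of the integral generators $\zeta_k^n$ determine arbitrary elements of $A$, while the coefficient of each exotic $\beta_{p,j}$ must be $p$-torsion (otherwise the surviving image of $\coboundarymap_2 \tilde{\beta}_{p,j}$ modulo $p$ would obstruct the cocycle condition over $A$). This data assembles into a unique ring map $L_k \to A$ sending each $a_n$ and $b_{p,j}$ to the corresponding coefficient, and conversely any such ring map produces a cocycle by substitution into $F_k$; the bijection is the desired representability. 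The tensor product structure of $L_k$ visibly exhibits it as a tensor of polynomial rings $\Z[a_n]$, each integral factor ``corrected'' by the torsion quotients $\Z[b_{p,j}]/(p\,b_{p,j})$.

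The main subtlety I anticipate is showing that the only torsion relations needed are $p\cdot b_{p,j}=0$, rather than higher-order relations arising from the inductive step of Theorem \ref{ArbitraryRingDecomp}. When decomposing a cocycle over $\Z_{p^{\ell+1}}$ as a cocycle over $\Z_{p^\ell}$ plus $p^\ell$ times a $\Z_p$-cocycle, one must confirm that the $p^\ell f'$ contribution is realized by letting the generator $b_{p,j}$ itself take the value $p^\ell \cdot (\text{lift})$ in $A$, rather than requiring a fresh generator of order $p^{\ell+1}$ in $L_k$. This amounts to checking naturality of the decomposition in $A$, which follows from the linear-independence half of Theorem \ref{IndecomposableBasis} applied uniformly across characteristics.
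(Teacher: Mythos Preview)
Your construction is essentially the paper's: build $L_k$ as a polynomial ring on one integral generator per $\zeta_k^n$, tensored with $p$-torsion polynomial generators for the extra mod-$p$ indecomposables, and check universality via Theorem~\ref{ArbitraryRingDecomp}. You go further than the paper by actually writing down the universal cocycle $F_k$ and verifying $\coboundarymap_2 F_k = 0$, which is a nice addition.

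There is, however, one genuine slip in your bookkeeping of $\mathcal{E}_p$. You assert that the characteristic-$p$ indecomposable basis ``consists of the $\pi_p$-images of these rational generators together with the exotic cocycles,'' and then let $\mathcal{E}_p$ be those not arising by projection. But Corollary~\ref{RationalProjection} says precisely the opposite: $\pi_p\zeta_k^n$ is \emph{never} itself an indecomposable when there is more than one; it is a linear combination $\sum_i a_i\beta_i$ with \emph{all} $a_i$ nonzero. So in any degree with at least two mod-$p$ indecomposables, none of them ``arises by projection from $\Z$,'' and your $\mathcal{E}_p$ is the full indecomposable basis. Your $L_k$ then has one generator too many per such $(n,p)$, and representability fails: if $c\in A$ satisfies $pc=0$, then since $\zeta_k^n \equiv \sum_j \tilde\beta_{p,j}\pmod{p}$ (after rescaling), the two distinct ring maps $a_n\mapsto c,\, b_{p,j}\mapsto 0$ and $a_n\mapsto 0,\, b_{p,j}\mapsto c\cdot(\text{coeff})$ produce the same cocycle $c\,\zeta_k^n$.

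The paper's proof fixes exactly this by explicitly dropping one indecomposable, writing $\beta_0 = \zeta_k^n - \sum_{i\ne 0}\beta_i$ and letting $i$ range only over $\{1,\ldots,l_{n,p}\}$ in the torsion factor. Once you make that adjustment (let $\mathcal{E}_p$ omit one chosen $\beta_{p,0}$ per degree), your argument goes through and matches the paper's; the subtlety you flag about higher $p$-power torsion is real but, as you say, is handled by the inductive decomposition in Theorem~\ref{ArbitraryRingDecomp}.
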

\begin{proof}
Fix a homogenous degree $n$ and number of variables $k$.  Then the representing ring for symmetric $2$-cocycles in $k$ variables of this homogenous degree are selected by the coefficients $a_i$ in \ref{ArbitraryRingDecomp}.  If we denote the coefficient of $\zeta_k^n$ as $b_n$ and the coefficient of the polynomial $\beta_i$ in the characteristic $p$ indecomposable basis as $c_{p, i}$, where $i$ ranges over the integers $\{0, \ldots, l_{n, p}\}$, then our representing ring is given by \[L_k^n = \Z[b_n] \otimes \left( \bigotimes_{\substack{\hbox{$p$ prime}, \\ i \in \{1, \ldots, l_{n, p}\}}} \frac{\Z_p[c_{p, i}]}{\langle p c_{p, i} \rangle} \right).\]  Here we drop the zeroth indecomposable basis element because, as noted in \ref{RationalProjection}, $\beta_0 = \zeta_k^n - \sum_{i \ne 0} \beta_i$.

These rings $L^n_k$ can then be tensored together to form $L_k = \bigotimes_n L^n_k$, the representing ring for symmetric $2$-cocycles in $k$ variables.
\end{proof} 

It is worth noting that when $k = 2$ we cover the classic result $L_2 = \Z[b_2, b_3, b_4, \ldots]$, since the innermost tensor product vanishes.

\subsection{For Higher $m$}\label{ForHigherM}

Many of the results in this paper were presented in the context of $\coboundarymap_2$, but in fact yield results for $\coboundarymap_m$ with $m > 2$ as well.

\begin{lem}\label{CocycleGCD} 
If $f$, a symmetric $k$-variable polynomial, is both an $n$-cocycle and an $m$-cocycle, then $f$ is also an $n+m$-cocycle (provided $n + m < k$) and an $|n-m|$-cocycle (provided $n \ne m$).
\end{lem}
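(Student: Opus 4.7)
The plan is to derive both claims from a single master identity writing $\delta_{m+n}f$ as $\delta_m f$ plus a permuted copy of $\delta_n f$. Specifically, for symmetric $f \in A[x_1, \ldots, x_k]$ and any $m \ge 0$, $n \ge 1$ with $m + n \le k$, let $\sigma \mathbf{x} = (x_m, x_{m+1}, \ldots, x_{m+n}, x_0, \ldots, x_{m-1}, x_{m+n+1}, \ldots, x_k)$ denote the permutation of $(x_0, \ldots, x_k)$ that moves the active block $x_m, \ldots, x_{m+n}$ to the front. I will show
\[
\delta_{m+n} f \;=\; \delta_m f \;+\; (-1)^{m} (\delta_n f)(\sigma \mathbf{x}).
\]
Granting this, both conclusions are immediate. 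If $\delta_m f = \delta_n f = 0$ as polynomial identities, each summand on the right vanishes, so $\delta_{m+n}f = 0$. For the $|n-m|$-cocycle claim, assume without loss of generality that $n < m$ and apply the identity with the pair $(m-n, n)$ in place of $(m, n)$, which gives $\delta_m f = \delta_{m-n}f + (-1)^{m-n}(\delta_n f)(\cdots)$; the vanishing of $\delta_m f$ and $\delta_n f$ then forces $\delta_{m-n}f = 0$.

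To prove the identity, I expand $\delta_{m+n}f$ and $\delta_m f$ using the definition and subtract. The opening term $f(x_1, \ldots, x_k)$ and the first $m$ merge terms $(-1)^i f(x_0, \ldots, x_{i-1} + x_i, \ldots, x_k)$ for $i = 1, \ldots, m$ occur identically in both expansions and cancel, leaving a residue consisting of $n$ merge terms $(-1)^i f(x_0, \ldots, x_{i-1} + x_i, \ldots, x_k)$ for $i = m+1, \ldots, m+n$, together with the two drop terms $(-1)^{m+n+1} f(x_0, \ldots, \widehat{x_{m+n}}, \ldots, x_k)$ and $-(-1)^{m+1} f(x_0, \ldots, \widehat{x_m}, \ldots, x_k)$. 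Factoring out $(-1)^{m+1}$ and reindexing $j = i - m - 1$, the symmetry of $f$ lets me transport the parameter blocks $x_0, \ldots, x_{m-1}$ and $x_{m+n+1}, \ldots, x_k$ past the active block $x_m, \ldots, x_{m+n}$, so that each summand matches, up to the overall sign $-1$, a summand in the expansion of $\delta_n f$ at $\sigma \mathbf{x}$: the $x_m$-drop becomes the opening $f(z_1, \ldots, z_k)$ term, the $x_{m+n}$-drop becomes the closing $(-1)^{n+1} f(z_0, \ldots, \widehat{z_n}, \ldots, z_k)$ term, and the merge terms match in order.

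The main obstacle is purely the sign bookkeeping: verifying that the residual sign $(-1)^i$ matches $(-1)^{j+1}$ in the merge portion of $\delta_n f$ after reindexing, and that the two drop terms really do play the roles claimed. The symmetry hypothesis on $f$ enters essentially at the shuffle step; without it, there is no way to reinterpret a residue whose active variables occupy middle positions as a coboundary whose active variables occupy the initial positions.
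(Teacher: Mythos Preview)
Your proof is correct and follows essentially the same approach as the paper: both split the terms of $\coboundarymap_{m+n} f$ into a block recognized as $\coboundarymap_m f$ and a remaining block recognized (after permuting variables via the symmetry of $f$) as $\coboundarymap_n f$. Your packaging of this split as the single explicit identity $\coboundarymap_{m+n} f = \coboundarymap_m f + (-1)^m (\coboundarymap_n f)(\sigma\mathbf{x})$ is cleaner than the paper's two separate informal reductions and lets both conclusions fall out at once, but the underlying mechanism is identical.
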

\begin{proof}
Assume $n + m < k$, and consider the unreduced sum $\coboundarymap_{n+m} f$.  The first $n+1$ terms of this sum can be reduced to $f(x_0, \ldots, x_{n-1}, x_{n+1}, \ldots, x_k)$ by applying the $n$-cocycle condition.  Then, the remaining $m+2$ summands can be reduced to zero using the $m$-cocycle condition.

Take $m < n$ for simplicity, so $n - m > 0$.  First, we know that $\coboundarymap_n f = 0$, and again we work with the unreduced sum of $\coboundarymap_n f$.  We can replace the last $m+1$ terms of the sum with $f(x_0, \ldots, x_{n-m-1}, x_{n-m+1}, \ldots, x_k)$, and the residual sum forms exactly $\coboundarymap_{n-m} f$.  Transitivity shows that $f \in \ker \coboundarymap_{n-m}$.
\end{proof} 

\begin{cor}\label{CocycleMultiple} 
If $f$ is a symmetric $k$-variable $m$-cocycle, then it is a symmetric $(cm)$-cocycle for all $c$ with $c \cdot m \le k$.
\end{cor}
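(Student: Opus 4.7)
The plan is a straightforward induction on $c$, using Lemma~\ref{CocycleGCD} as the inductive step. The base case $c = 1$ is immediate, since being a $(1 \cdot m)$-cocycle is simply the hypothesis.

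For the inductive step, suppose $f$ is both an $m$-cocycle and a $(cm)$-cocycle, with $(c+1)m \le k$. I would apply Lemma~\ref{CocycleGCD} with the two cocycle indices taken to be $m$ and $cm$; its first clause (the ``$n+m$'' case) then delivers that $f$ is a $((c+1)m)$-cocycle, provided the hypothesis $cm + m \le k$ is met, which is exactly the range assumption we carry through the induction. Iterating this up from $c=1$ gives the conclusion for every $c$ with $cm \le k$.

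The only subtlety — and hence the only ``obstacle'' worth flagging — is bookkeeping on the boundary case: Lemma~\ref{CocycleGCD} is stated with the strict inequality $n + m < k$, whereas the corollary is stated with the non-strict $c \cdot m \le k$. Since $\coboundarymap_{m'}$ is defined on polynomials of $k \ge m'$ variables (see Definition~\ref{ACocycleDefn}), the hypothesis of the lemma really should be read as $n+m \le k$, and the proof of the lemma indeed goes through under this weaker condition (the unreduced sum $\coboundarymap_{n+m}f$ involves $x_0, \ldots, x_{n+m}$, which is still within the $k$-variable setting when $n+m \le k$). Provided one makes this mild reinterpretation explicit, the induction runs without further difficulty.
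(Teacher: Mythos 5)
Your proof is correct and is essentially the paper's own argument: the paper's proof reads ``Take $n = m$ and apply the first part of \ref{CocycleGCD} inductively,'' which is exactly your induction on $c$. Your remark about reading the lemma's hypothesis as $n+m \le k$ rather than $n+m < k$ is a fair bookkeeping point that the paper glosses over, but it does not change the substance of the argument.
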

\begin{proof}
Take $n = m$ and apply the first part of \ref{CocycleGCD} inductively.
\end{proof} 

In particular, \textsection\ref{IntegralProjection} relies on $m = 2$, which allows us to limit the number of parents an image monomial has and make the annihilating set construction (see \ref{AnnihilatorIsAptlyNamed}).  This means that the exhaustivity of our classification here cannot be directly generalized to all $m$ that satisfy $2 \mid m$, though the existence of these gathered cocycles is guaranteed by \ref{CocycleMultiple}.

Using \emph{Mathematica}, we've been able to explore $m > 2$ for relatively small $m$; because $\coboundarymap_m$ only applies to polynomials of dimension at least $m$, the calculations grow unreasonably slow fairly quickly.  Nonetheless, it is our conjecture that the exhaustiveness of gathering is in fact true for all even $m$.  In addition, these gathered cocycles appear to vanish for odd $m$, leaving only the power-of-$p$ symmetrized monomials behind.


\newpage
\appendix

\section{Tables of Modular Additive 2-Cocycles}
\label{TablesOfModulesAdditive2Cocycles}

Here we provide comma delimited lists of modular symmetric 2-cocycles, ordered in rows by degree and in columns by dimension.

\subsection{Characteristic 2}\label{Char2Table}
\begin{equation*}
\begin{array}{r|l|l|l|l|l|}
              & \hbox{dim 2}  &             3    &                4   &                   5   &                      6   \\
\hline
\hbox{deg 2}  & \tau(1, 1)    &             0    &                0   &                   0   &                      0   \\
            3 & \tau(2, 1)    & \tau(1, 1, 1)    &                0   &                   0   &                      0   \\
            4 & \tau(2, 2)    & \tau(2, 1, 1)    & \tau(1, 1, 1, 1)   &                   0   &                      0   \\
            5 & \tau(4, 1)    & \tau(2, 2, 1)    & \tau(2, 1, 1, 1)   & \tau(1, 1, 1, 1, 1)   &                      0   \\
            6 & \tau(4, 2)    & \tau(2, 2, 2),    & \tau(2, 2, 1, 1)   & \tau(2, 1, 1, 1, 1)   & \tau(1, 1, 1, 1, 1, 1)   \\
              &               & \tau(4, 1, 1)    &                    &                       &                          \\
            7 & \tau(6, 1) +  & \tau(4, 2, 1)    & \tau(2, 2, 2, 1) , & \tau(2, 2, 1, 1, 1)   & \tau(2, 1, 1, 1, 1, 1)   \\
              & \tau(5, 2) +  &                  & \tau(4, 1, 1, 1)   &                       &                          \\
              & \tau(4, 3)    &                  &                    &                       &                          \\
            8 & \tau(4, 4)    & \tau(4, 2, 2)    & \tau(2, 2, 2, 2) , & \tau(2, 2, 2, 1, 1) , & \tau(2, 2, 1, 1, 1, 1)   \\
              &               &                  & \tau(4, 2, 1, 1)   & \tau(4, 1, 1, 1, 1)   &                          \\
             9 & \tau(8, 1)    & \tau(4, 4, 1)    & \tau(4, 2, 2, 1)   & \tau(2, 2, 2, 2, 1) , & \tau(2, 2, 2, 1, 1, 1) , \\
               &               &                  &                    & \tau(4, 2, 1, 1, 1)   & \tau(4, 1, 1, 1, 1, 1)   \\
            10 & \tau(8, 2)    & \tau(4, 4, 2) ,  & \tau(4, 2, 2, 2) , & \tau(2, 2, 2, 2, 2) , & \tau(2, 2, 2, 2, 1, 1) , \\
               &               & \tau(8, 1, 1)    & \tau(4, 4, 1, 1)   & \tau(4, 2, 2, 1, 1)   & \tau(4, 2, 1, 1, 1, 1)   \\
            11 & \tau(10, 1) + & \tau(8, 2, 1)    & \tau(4, 4, 2, 1) , & \tau(4, 2, 2, 2, 1) , & \tau(2, 2, 2, 2, 2, 1) , \\
               & \tau(9, 2)  + &                  & \tau(8, 1, 1, 1)   & \tau(4, 4, 1, 1, 1)   & \tau(4, 2, 2, 1, 1, 1)   \\
               & \tau(8, 3)    &                  &                    &                       &                          \\
            12 & \tau(8, 4)    & \tau(4, 4, 4)  , & \tau(4, 4, 2, 2) , & \tau(4, 2, 2, 2, 2) , & \tau(2, 2, 2, 2, 2, 2) , \\
               &               & \tau(8, 2, 2)    & \tau(8, 2, 1, 1)   & \tau(4, 4, 2, 1, 1) , & \tau(4, 2, 2, 2, 1, 1) , \\
               &               &                  &                    & \tau(8, 1, 1, 1, 1)   & \tau(4, 4, 1, 1, 1, 1)   \\
            13 & \tau(12, 1) + & \tau(8, 4, 1)    & \tau(4, 4, 4, 1) , & \tau(4, 4, 2, 2, 1) , & \tau(4, 2, 2, 2, 2, 1) , \\
               & \tau(9, 4)  + &                  & \tau(8, 2, 2, 1)   & \tau(8, 2, 1, 1, 1)   & \tau(4, 4, 2, 1, 1, 1) , \\
               & \tau(8, 5)    &                  &                    &                       & \tau(8, 1, 1, 1, 1, 1)   \\
            14 & \tau(12, 2) + & \tau(8, 4, 2)    & \tau(4, 4, 4, 2) , & \tau(4, 4, 2, 2, 2) , & \tau(4, 4, 2, 2, 1, 1) , \\
               & \tau(10, 4) + &                  & \tau(8, 2, 2, 2) , & \tau(4, 4, 4, 1, 1) , & \tau(8, 2, 1, 1, 1, 1) , \\
               & \tau(8, 6)    &                  & \tau(8, 4, 1, 1)   & \tau(8, 2, 2, 1, 1)   & \tau(4, 2, 2, 2, 2, 2)   \\
            15 & \tau(14, 1) + & \tau(12, 2, 1) + & \tau(8, 4, 2, 1)   & \tau(4, 4, 4, 2, 1) , & \tau(4, 4, 2, 2, 2, 1) , \\
               & \tau(13, 2) + & \tau(10, 4, 1) + &                    & \tau(8, 2, 2, 2, 1) , & \tau(4, 4, 4, 1, 1, 1) , \\
               & \tau(12, 3) + & \tau(9, 4, 2)  + &                    & \tau(8, 4, 2, 1, 1)   & \tau(8, 2, 2, 1, 1, 1)   \\
               & \tau(11, 4) + & \tau(8, 6, 1)  + &                    &                       &                          \\
               & \tau(10, 5) + & \tau(8, 5, 2)  + &                    &                       &                          \\
               & \tau(9, 6)  + & \tau(8, 4, 3)    &                    &                       &                          \\
               & \tau(8, 7)    &                  &                    &                       &                          \\
            16 & \tau(8, 8)    & \tau(8, 4, 4)    & \tau(4, 4, 4, 4) , & \tau(4, 4, 4, 2, 2) , & \tau(4, 4, 2, 2, 2, 2) , \\
               &               &                  & \tau(8, 4, 2, 2)   & \tau(8, 2, 2, 2, 2) , & \tau(4, 4, 4, 2, 1, 1) , \\
               &               &                  &                    & \tau(8, 4, 2, 1, 1)   & \tau(8, 2, 2, 2, 1, 1) , \\
               &               &                  &                    &                       & \tau(8, 4, 1, 1, 1, 1)   \\
\vdots & \vdots & \vdots & \vdots & \vdots & \vdots \\
\end{array}
\end{equation*}

\subsection{Characteristic 3}\label{Char3Table}
\begin{equation*}
\begin{array}{r|l|l|l|l|l|}

  &     \hbox{dim 2}&       3&         4&          5&            6 \\
\hline
 \hbox{deg 2}&\tau(1,1)  &      0 &        0 &         0 &            0 \\
 3&\tau(2,1)  & \tau(1,1,1) &        0 &         0 &            0 \\
 4&\tau(3,1)  & \tau(2,1,1) & \tau(1,1,1,1) &         0 &            0 \\
 5&\tau(3,2)- & \tau(3,1,1) & \tau(2,1,1,1) & \tau(1,1,1,1,1) &            0 \\
  &\tau(4,1)  &             &               &                 &             \\
 6&\tau(3,3)  & \tau(3,2,1)-& \tau(3,1,1,1) & \tau(2,1,1,1,1) & \tau(1,1,1,1,1,1) \\
  &           & \tau(4,1,1) &          &           &             \\
 7&\tau(4,3)- & \tau(3,3,1) & \tau(3,2,1,1)-& \tau(3,1,1,1,1) & \tau(2,1,1,1,1,1) \\
  &\tau(6,1)  &             & \tau(4,1,1,1) &           &             \\
 8&\tau(6,2)+ & \tau(6,1,1)-& \tau(3,3,1,1) & \tau(3,2,1,1,1)-& \tau(3,1,1,1,1,1) \\
  &\tau(4,4)- & \tau(4,3,1)+&               & \tau(4,1,1,1,1) &             \\
  &\tau(7,1)- & \tau(3,3,2) &          &           &             \\
  &\tau(5,3)  &        &          &           &             \\
 9&\tau(6,3)  & \tau(3,3,3) & \tau(6,1,1,1)-& \tau(3,3,1,1,1) & \tau(3,2,1,1,1,1)-\\
  &           &             & \tau(4,3,1,1)+&                 & \tau(4,1,1,1,1,1) \\
  &           &             & \tau(3,3,2,1) &           &             \\
10&\tau(9,1)  & \tau(4,3,3)-& \tau(3,3,3,1) & \tau(6,1,1,1,1)-& \tau(3,3,1,1,1,1) \\
  &           & \tau(6,3,1) &               & \tau(4,3,1,1,1)+&             \\
  &           &             &               & \tau(3,3,2,1,1) &             \\
11&\tau(9,2)- & \tau(9,1,1) & \tau(6,3,1,1)-& \tau(3,3,3,1,1) & \tau(6,1,1,1,1,1)-\\
  &\tau(10,1) &             & \tau(4,3,3,1)+&                 & \tau(4,3,1,1,1,1)+\\
  &           &             & \tau(3,3,3,2) &                 & \tau(3,3,2,1,1,1) \\
12&\tau(9,3)  & \tau(6,3,3),& \tau(3,3,3,3),& \tau(6,3,1,1,1)-& \tau(3,3,3,1,1,1) \\
  &           & \tau(9,2,1)-& \tau(9,1,1,1) & \tau(4,3,3,1,1)+&             \\
  &           &\tau(10,1,1) &               & \tau(3,3,3,2,1) &             \\
13&\tau(12,1)+& \tau(9,3,1) & \tau(4,3,3,3)-& \tau(3,3,3,3,1),& \tau(6,3,1,1,1,1)-\\
  &\tau(10,3)+&             & \tau(6,3,3,1),& \tau(9,1,1,1,1) & \tau(4,3,3,1,1,1)+\\
  &\tau(9,4)  &             & \tau(9,2,1,1)-&                 & \tau(3,3,3,2,1,1) \\
  &           &             &\tau(10,1,1,1) &                 &             \\
14&\tau(12,2)-& \tau(9,3,2)-& \tau(9,3,1,1) & \tau(6,3,3,1,1)-& \tau(3,3,3,3,1,1),\\
  &\tau(13,1)+&\tau(12,1,1)-&               & \tau(4,3,3,3,1)+& \tau(9,1,1,1,1,1) \\
  &\tau(11,3)-&\tau(10,3,1)-&               & \tau(3,3,3,3,2),&             \\
  &\tau(10,4)+& \tau(9,4,1) &               & \tau(9,2,1,1,1)-&             \\
  &\tau(9,5)  &             &               &\tau(10,1,1,1,1) &             \\
15&\tau(9,6)- & \tau(9,3,3) & \tau(6,3,3,3),& \tau(3,3,3,3,3),& \tau(9,2,1,1,1,1)-\\
  &\tau(12,3) &             & \tau(9,3,2,1)-& \tau(9,3,1,1,1) &\tau(10,1,1,1,1,1),\\
  &           &             &\tau(12,1,1,1)-&                 & \tau(6,3,3,1,1,1)-\\
  &           &             &\tau(10,3,1,1)-&                 & \tau(4,3,3,3,1,1)+\\
  &           &             &\tau(9,4,1,1)  &                 & \tau(3,3,3,3,2,1) \\
16&\tau(15,1)-&\tau(12,3,1)-& \tau(9,3,3,1) & \tau(4,3,3,3,3)-& \tau(3,3,3,3,3,1),\\
  &\tau(13,3)-&\tau(9,6,1)+ &               & \tau(6,3,3,3,1),& \tau(9,3,1,1,1,1) \\
  &\tau(12,4)+&\tau(10,3,3)+&               & \tau(9,3,2,1,1)-&             \\
  &\tau(10,6)+& \tau(9,4,3) &               &\tau(10,3,1,1,1)-&             \\
  &\tau(9,7)  &             &               & \tau(9,4,1,1,1)-&             \\
  &           &             &               &\tau(12,1,1,1,1) &             \\
\vdots & \vdots & \vdots & \vdots & \vdots & \vdots \\
\end{array}
\end{equation*}

\subsection{Characteristic 5}\label{Char5Table}
\begin{equation*}
\begin{array}{r|l|l|l|l|l|}
             & \hbox{dim 2}          & 3                        &                        4    &                           5   &                              6   \\
\hline
\hbox{deg 2} & 1 \cdot \tau(1, 1)    &                     0    &                        0    &                           0   &                              0   \\
           3 & 1 \cdot \tau(2, 1)    & 1 \cdot \tau(1, 1, 1)    &                        0    &                           0   &                              0   \\
           4 & 1 \cdot \tau(2, 2)  + & 1 \cdot \tau(2, 1, 1)    & 1 \cdot \tau(1, 1, 1, 1)    &                           0   &                              0   \\
             & 4 \cdot \tau(3, 1)    &                          &                             &                               &                                  \\
           5 & 1 \cdot \tau(3, 2)  + & 1 \cdot \tau(2, 2, 1)  + & 1 \cdot \tau(2, 1, 1, 1)    & 1 \cdot \tau(1, 1, 1, 1, 1)   &                              0   \\
             & 3 \cdot \tau(4, 1)    & 4 \cdot \tau(3, 1, 1)    &                             &                               &                                  \\
           6 & 1 \cdot \tau(5, 1)    & 1 \cdot \tau(2, 2, 2)  + & 1 \cdot \tau(2, 2, 1, 1)  + & 1 \cdot \tau(2, 1, 1, 1, 1)   & 1 \cdot \tau(1, 1, 1, 1, 1, 1)   \\
             &                       & 2 \cdot \tau(4, 1, 1)  + & 4 \cdot \tau(3, 1, 1, 1)    &                               &                                  \\
             &                       & 4 \cdot \tau(3, 2, 1)    &                             &                               &                                  \\
           7 & 1 \cdot \tau(5, 2)  + & 1 \cdot \tau(5, 1, 1)    & 1 \cdot \tau(2, 2, 2, 1)  + & 1 \cdot \tau(2, 2, 1, 1, 1) + & 1 \cdot \tau(2, 1, 1, 1, 1, 1)   \\
             & 2 \cdot \tau(6, 1)    &                          & 2 \cdot \tau(4, 1, 1, 1)  + & 4 \cdot \tau(3, 1, 1, 1, 1)   &                                  \\
             &                       &                          & 4 \cdot \tau(3, 2, 1, 1)    &                               &                                  \\
           8 & 1 \cdot \tau(5, 3)  + & 1 \cdot \tau(5, 2, 1)  + & 1 \cdot \tau(5, 1, 1, 1)    & 1 \cdot \tau(2, 2, 2, 1, 1) + & 1 \cdot \tau(2, 2, 1, 1, 1, 1) + \\
             & 3 \cdot \tau(6, 2)  + & 2 \cdot \tau(6, 1, 1)    &                             & 2 \cdot \tau(4, 1, 1, 1, 1) + & 4 \cdot \tau(3, 1, 1, 1, 1, 1)   \\
             & 3 \cdot \tau(7, 1)    &                          &                             & 4 \cdot \tau(3, 2, 1, 1, 1)   &                                  \\
           9 & 1 \cdot \tau(5, 4)  + & 1 \cdot \tau(5, 2, 2)  + & 1 \cdot \tau(5, 2, 1, 1)  + & 1 \cdot \tau(5, 1, 1, 1, 1)   & 1 \cdot \tau(2, 2, 2, 1, 1, 1) + \\
             & 1 \cdot \tau(7, 2)  + & 2 \cdot \tau(6, 2, 1)  + & 2 \cdot \tau(6, 1, 1, 1)    &                               & 2 \cdot \tau(4, 1, 1, 1, 1, 1) + \\
             & 4 \cdot \tau(6, 3)  + & 2 \cdot \tau(7, 1, 1)  + &                             &                               & 4 \cdot \tau(3, 2, 1, 1, 1, 1)   \\
             & 4 \cdot \tau(8, 1)    & 4 \cdot \tau(5, 3, 1)    &                             &                               &                                  \\
          10 & 1 \cdot \tau(5, 5)    & 1 \cdot \tau(5, 3, 2)  + & 1 \cdot \tau(5, 2, 2, 1)  + & 1 \cdot \tau(5, 2, 1, 1, 1) + & 1 \cdot \tau(5, 1, 1, 1, 1, 1)   \\
             &                       & 2 \cdot \tau(6, 3, 1)  + & 2 \cdot \tau(6, 2, 1, 1)  + & 2 \cdot \tau(6, 1, 1, 1, 1)   &                                  \\
             &                       & 2 \cdot \tau(8, 1, 1)  + & 2 \cdot \tau(7, 1, 1, 1)  + &                               &                                  \\
             &                       & 3 \cdot \tau(5, 4, 1)  + & 4 \cdot \tau(5, 3, 1, 1)    &                               &                                  \\
             &                       & 3 \cdot \tau(6, 2, 2)  + &                             &                               &                                  \\
             &                       & 3 \cdot \tau(7, 2, 1)    &                             &                               &                                  \\
          11 & 1 \cdot \tau(6, 5)  + & 1 \cdot \tau(5, 5, 1)    & 1 \cdot \tau(5, 2, 2, 2)  + & 1 \cdot \tau(5, 2, 2, 1, 1) + & 1 \cdot \tau(5, 2, 1, 1, 1, 1) + \\
             & 3 \cdot \tau(10, 1)   &                          & 2 \cdot \tau(5, 4, 1, 1)  + & 2 \cdot \tau(6, 2, 1, 1, 1) + & 2 \cdot \tau(6, 1, 1, 1, 1, 1)   \\
             &                       &                          & 2 \cdot \tau(6, 2, 2, 1)  + & 2 \cdot \tau(7, 1, 1, 1, 1) + &                                  \\
             &                       &                          & 2 \cdot \tau(7, 2, 1, 1)  + & 4 \cdot \tau(5, 3, 1, 1, 1)   &                                  \\
             &                       &                          & 3 \cdot \tau(6, 3, 1, 1)  + &                               &                                  \\
             &                       &                          & 3 \cdot \tau(8, 1, 1, 1)  + &                               &                                  \\
             &                       &                          & 4 \cdot \tau(5, 3, 2, 1)    &                               &                                  \\
          12 & 1 \cdot \tau(6, 6)  + & 1 \cdot \tau(5, 5, 2)  + & 1 \cdot \tau(5, 5, 1, 1)    & 1 \cdot \tau(5, 2, 2, 2, 1) + & 1 \cdot \tau(5, 2, 2, 1, 1, 1) + \\
             & 1 \cdot \tau(7, 5)  + & 1 \cdot \tau(10, 1, 1) + &                             & 2 \cdot \tau(5, 4, 1, 1, 1) + & 2 \cdot \tau(6, 2, 1, 1, 1, 1) + \\
             & 3 \cdot \tau(11, 1) + & 2 \cdot \tau(6, 5, 1)    &                             & 2 \cdot \tau(6, 2, 2, 1, 1) + & 2 \cdot \tau(7, 1, 1, 1, 1, 1) + \\
             & 4 \cdot \tau(10, 2)   &                          &                             & 2 \cdot \tau(7, 2, 1, 1, 1) + & 4 \cdot \tau(5, 3, 1, 1, 1, 1)   \\
             &                       &                          &                             & 3 \cdot \tau(6, 3, 1, 1, 1) + &                                  \\
             &                       &                          &                             & 3 \cdot \tau(8, 1, 1, 1, 1) + &                                  \\
             &                       &                          &                             & 4 \cdot \tau(5, 3, 2, 1, 1)   &                                  \\
\vdots & \vdots & \vdots & \vdots & \vdots & \vdots \\
\end{array}
\end{equation*}
\newpage

\section{Notation}
\begin{tabular}{ll}
$\N_0 = \N \cup \{0\}$ & $\{0,1,2,3,...\}$ \\
$\Z_p = \Z/p\Z$ & the field with $p$ elements. \\
$S_k$ & the symmetric group on $k$ symbols. \\
$\lambda\cup\mu$ & the multi-index $(\lambda,$ $\mu)$\\
$\lambda \setminus \mu$ & the unique operation satisfying $\left(\lambda \setminus \mu\right) \cup \mu = \lambda$ up to order\\
${\bf x} = (x_1, \ldots, x_k)$ & a $k$-tuple of variables. \\
$A[ {\bf x} ]$ & the ring of polynomials in the $x_i \in {\bf x}$ over $A$. \\
$A\llbracket {\bf x} \rrbracket$ & the ring of power-series in the $x_i \in {\bf x}$ over $A$. \\
$\lambda ! = \prod_i \lambda_i !$ & the partition factorial \\
${n \choose \lambda} = \frac{n!}{\lambda !}$ & the multinomial coefficient of $\lambda$. \\
$\lambda \vdash n$ & $\lambda$ is a partition of $n$. \\
$\zeta_k^n$ & the unique rational $2$-cocycle of degree $n$ in $k$ variables. \\
$\tau \lambda, \tau(\lambda)$ & the monomial symmetric function associated to $\lambda$. \\
$\coboundarymap_m$ & the (additive) $m$-coboundary map. \\
\end{tabular}




\end{document}